\documentclass[11pt, a4paper,reqno]{amsart}

\usepackage[utf8]{inputenc}
\usepackage{amsmath}
\usepackage[dvipsnames]{xcolor}
\usepackage{amssymb,esint}
\usepackage{amsthm}
\usepackage{dsfont}
\usepackage[shortlabels]{enumitem}
\usepackage[english]{babel}
\usepackage{hyperref}
\usepackage[]{mdframed}

\calclayout
\theoremstyle{definition}
\newtheorem{theorem}{Theorem}[section]
\newtheorem{definition}[theorem]{Definition}
\newtheorem{remark}[theorem]{Remark}
\newtheorem{lemma}[theorem]{Lemma}
\newtheorem{corollary}[theorem]{Corollary}
\numberwithin{equation}{section}

\newcommand{\mref}[1]{\textbf{(M#1)}}
\newcommand{\abs}[1]{\left\lvert#1\right\rvert}
\newcommand{\norm}[1]{\left\lVert#1\right\rVert}
\newcommand{\R}{\mathbb R}
\newcommand{\N}{\mathbb N}
\newcommand{\A}{\mathcal A}
\newcommand{\dx}{\, \mathrm d}
\newcommand{\id}{\mathrm{Id}}
\newcommand{\cF}{\mathcal F}
\newcommand{\D}{\mathcal D}
\newcommand{\kvec}[1]{\mathbf{#1}}
\newcommand{\m}{\mathbf m}
\newcommand{\homd}{\mathsf{Q}}
\newcommand{\cW}{\mathcal W}
\newcommand{\E}{\mathcal E}

\DeclareMathOperator{\supp}{supp}
\DeclareMathOperator*{\esssup}{ess\,sup}
\newcommand{\indicator}{\mathds{1}}
\renewcommand{\L}{\operatorname{L}} 
\newcommand{\C}{\operatorname{C}} 
\renewcommand{\H}{\operatorname{H}} 
\DeclareRobustCommand{\Bdot}{\dot{\operatorname{B}}\protect{\vphantom{B}}}

\begin{document}
\allowdisplaybreaks
\date{13 May 2026}

\title{Nonlinear Kinetic Diffusion Equations with \(p\)-Growth}

\author{Helge Dietert}
\address[Helge Dietert]{Universit\'e Paris Cit\'e and Sorbonne
  Universit\'e, CNRS, IMJ-PRG, F-75006 Paris, France}
\email{helge.dietert@imj-prg.fr}

\author{Lukas Niebel}
\address[Lukas Niebel]{Institut f\"ur Analysis und Numerik, Universit\"at M\"unster, Orl\'eans-Ring 10, 48149 M\"unster, Germany.}
\email{lukas.niebel@uni-muenster.de }

\author{Rico Zacher}
\address[Rico Zacher]{Institut f\"ur Angewandte Analysis, Universit\"at Ulm, Helmholtzstra\ss{}e 18, 89081 Ulm, Germany.}
\email{rico.zacher@uni-ulm.de}

\begin{abstract}
  We establish the local boundedness of (sub-)solutions to nonlinear kinetic
  diffusion equations with \(p\)-growth, where the kinetic \(p\)-Laplace
  equation is a prototypical example.  A key ingredient is the derivation of
  kinetic Gagliardo--Nirenberg inequalities, where the Lebesgue norm of a
  function is estimated in terms of its transport and diffusive directions
  controlled in different Lebesgue spaces.
\end{abstract}

\maketitle

\section{Introduction}

We are interested in estimates for (sub-)solutions $f = f(t,x,v)$ to nonlinear
kinetic equations of the form
\begin{equation}
  \label{eq:nonlinear-laplace}
  (\partial_t + v \cdot \nabla_x)f
  - \nabla_{v} \cdot A(t,x,v,f,\nabla_v f) = 0,
\end{equation}
for variables \(t\) in \(\R\) and \(x,v\) in \(\R^d\).
Throughout, we assume that
\(A:\R^{1+2d}\times\R\times\R^d\to\R^d\) is a Carath\'eodory vector field, that is,
\(A(\cdot,\cdot,\cdot,\eta,\xi)\) is measurable for every \((\eta,\xi)\in\R\times\R^d\),
and \(A(t,x,v,\cdot,\cdot)\) is continuous for a.e. \((t,x,v)\).  We also suppose the
homogeneous bounds
\begin{equation} \label{eq:a-bounds}
  A(t,x,v,\eta,\xi)\cdot \xi \ge \lambda |\xi|^p,\qquad
  |A(t,x,v,\eta,\xi)| \le \Lambda |\xi|^{p-1},
\end{equation}
for a.e. \((t,x,v)\in\R^{1+2d}\), all \(\eta\in\R\), and all \(\xi\in\R^{d}\),
for some \(0<\lambda,\Lambda<\infty\) and \(p \in (1,\infty)\).

We say that \eqref{eq:nonlinear-laplace} is a nonlinear kinetic (local) diffusion equation with $p$-growth.
A concrete example is the kinetic $p$-Laplace equation
\begin{equation}
  \label{eq:kinetic-plaplace}
  (\partial_t + v \cdot \nabla_x)f
  - \nabla_{v} \cdot (  |\nabla_v f|^{p-2} \nabla_{v} f) = 0.
\end{equation}

\medskip

For the linear case $A(t,x,v,f,\nabla_v f) = \mathfrak{a}(t,x,v) \nabla_v f$, where
$\mathfrak a \in \L^\infty(\R^{1+2d} ; \R^{d \times d})$ is an elliptic diffusion matrix, a
kinetic De~Giorgi--Nash--Moser theory has been established over the last years
\cite{MR2773175,golse-imbert-mouhot-vasseur-2019-harnack-fokker-planck-landau,
  MR4453413,dietert-hirsch-2022-preprint-regularity,
  dietert2025criticaltrajectorieskineticgeometry,dn_kineticnsf_2025}.  A first
step is the local boundedness of subsolutions, first established in
\cite{MR2068847}.  There, a crucial step is the gain of integrability, where the
comparison with the fundamental solution of the constant coefficient problem has
been used.  Later variants, including Sobolev-type embeddings, can be found in
\cite{golse-imbert-mouhot-vasseur-2019-harnack-fokker-planck-landau,AIN,AN_weakLp_2025}.

In \cite{MR4468371} these results were framed in the nonlinear setting with
\(p=2\).  There, the gain of integrability in the case of nonlinear kinetic
equations with local diffusion of general $p$-growth is stated as a challenging
problem in Section~6, Problem~2.  Similarly in \cite[p. 22]{MR4950599}, while
referring to the case of non-local diffusion, it is said that ``it is not easy
to overcome the obstacles posed by both the lack of ellipticity and by the
$p$-growth''.

\medskip

In this work, we extend the gain of integrability to nonlinear kinetic diffusion
equations with $p$-growth by adapting the method of kinetic trajectories, namely
curves suited to the geometry of the vector fields $\partial_t + v \cdot \nabla_x$ and $\nabla_v$, as
proposed and studied in a series of works
\cite{MR4875497,anceschi2025poincareinequalityquantitativegiorgi,dietert2025criticaltrajectorieskineticgeometry,dn_kineticnsf_2025}.

Our main result is the following.
\begin{theorem}\label{thm:localbdd}
  Let $p \in (2-\frac{2}{3d+2},2+\frac{2}{d})$. Then any nonnegative weak
  subsolution $f \in \L^\infty_t\L^2_{x,v} \cap \L^p_{t,x}\H^{1,p}_v$ to
  \eqref{eq:nonlinear-laplace} is locally bounded.
\end{theorem}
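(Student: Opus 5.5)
We plan to prove Theorem~\ref{thm:localbdd} by a De~Giorgi iteration on the truncations \((f-k)_+\) over a nested family of shrinking kinetic cylinders
\[
Q_r=\{(t,x,v): -r^p<t\le 0,\ |x|<r^{1+p'}\text{-type scale},\ |v|<r\},
\]
adapted to the natural scaling of \eqref{eq:nonlinear-laplace}. We first fix the correct anisotropic scaling. If \(t\sim \tau\) and \(v\sim r\), then \(x\sim \tau r\). Balancing \(\partial_t\) against the \(p\)-diffusion of an \(\L^2\)-type quantity forces \(\tau\) to depend on the level \(k\) when \(p\neq 2\). We will therefore work with intrinsic scaling only implicitly, by keeping explicit track of the powers.

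\textbf{Step 1: energy (Caccioppoli) estimate.} Test the subsolution inequality with \((f-k)_+\varphi^p\), where \(\varphi\) is a cutoff in \((t,x,v)\). Using \eqref{eq:a-bounds} and Young's inequality, this gives
\[
\sup_t\int (f-k)_+^2\varphi^p+\int|\nabla_v (f-k)_+|^p\varphi^p\lesssim \int (f-k)_+^2\,|(\partial_t+v\cdot\nabla_x)\varphi^p|+\int (f-k)_+^p|\nabla_v\varphi|^p .
\]
This controls \(\nabla_v\) in \(\L^p\) and the solution in \(\L^\infty_t\L^2\). It does not control the transport derivative. That derivative is only known through the equation, namely
\[
(\partial_t+v\cdot\nabla_x)\big((f-k)_+\varphi\big)\le \nabla_v\cdot G+\text{l.o.t.}, \qquad G\in \L^{p'} .
\]

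\textbf{Step 2: gain of integrability via a kinetic Gagliardo--Nirenberg inequality.} This is the key step, and we expect it to be the main obstacle. We want to show that a nonnegative \(g=(f-k)_+\varphi\) with \((\partial_t+v\cdot\nabla_x)g\le \nabla_v\cdot G+h\) satisfies
\[
\|g\|_{\L^q}\lesssim \|g\|_{\L^\infty_t\L^2}^{\theta_1}\,\|\nabla_v g\|_{\L^p}^{\theta_2}\,\big(\|G\|_{\L^{p'}}+\|h\|_{\L^1}\big)^{\theta_3}
\]
for some \(q>\max(2,p)\). We intend to prove this with kinetic trajectories. Write \(g(z)\) as an average of \(g\) over a nearby region, plus the integral along a curve \(\gamma\) that alternates transport moves with \(v\)-moves. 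The \(v\)-moves are paid for by \(\nabla_v g\), and the transport moves by the equation, after moving \(\nabla_v\) onto the averaging kernel. This yields a pointwise representation by fractional-type kernels, and Hardy--Littlewood--Sobolev (or weak-\(\L^q\) bounds) in the homogeneous dimension \(4d+2\) then gives the \(\L^q\) bound. The upper bound \(p<2+\frac2d\) should appear as the condition that the \(\L^\infty_t\L^2\) information dominates, in the spirit of the parabolic threshold \(p>\frac{2N}{N+2}\) and the condition \(q>p\). The lower bound \(p>2-\frac{2}{3d+2}\) should appear as the condition that the duality exponent \(p'\) of \(G\) still allows a gain above \(2\). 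We will tune the trajectory lengths and optimize in a scaling parameter so that the estimate becomes exactly multiplicative.

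\textbf{Step 3: De~Giorgi iteration.} Take levels \(k_n=k(1-2^{-n})\) and radii \(r_n\downarrow r/2\), and set \(Y_n=\int_{Q_{r_n}}(f-k_n)_+^2\). Chebyshev gives \(|\{f>k_{n+1}\}\cap Q_{r_n}|\lesssim 2^{2n}Y_n/k^2\). The energy estimate and Step~2 then combine with Hölder to give
\[
Y_{n+1}\le C\,b^n\,k^{-\alpha}\,Y_n^{1+\varepsilon}, \qquad \varepsilon>0,
\]
where the terms \(\int(f-k)_+^p\) are also controlled by \(Y_n\) via Hölder, using \(q>p\). Finally, choosing \(k\) large in terms of \(\|f\|_{\L^\infty_t\L^2}\) and \(\|\nabla_v f\|_{\L^p}\) makes \(Y_n\to0\), so \(f\le k\) on \(Q_{r/2}\). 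The inhomogeneity between the \(\L^2\) and \(\L^p\) quantities is handled by keeping the mixed powers explicit, as in the degenerate/singular parabolic theory. The requirement \(\varepsilon>0\) is exactly where the two restrictions on \(p\) enter.
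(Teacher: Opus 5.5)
Your overall architecture (Caccioppoli estimate, a trajectory-based gain of integrability for $(f-k)_+\varphi$, De~Giorgi iteration) is the paper's. However, Step~2, which you rightly call the crux, is not supplied, and the route you indicate for it cannot reach the stated range of $p$.

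The obstruction is that $\nabla_v g\in\L^p$ and $G\in\L^{p'}$ have different integrability. Suppose the trajectories follow the standard kinetic geometry, as your homogeneous dimension $4d+2$ suggests ($x$-displacement $\sim r^{3/2}$ and $v$-displacement $\sim r^{1/2}$ after time $r$). Then the weak Young inequality sends the two contributions to $\L^{q_v}$ and $\L^{q_0}$, with $\frac1{q_v}=\frac1p-\frac1{4d+2}$ and $\frac1{q_0}=\frac1{p'}-\frac1{4d+2}$.

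These exponents are dictated by the small-$r$ singularity of the integrated kernels. The factor $\|g\|_{\L^\infty_t\L^2}$ cannot improve them, because the $\L^2$ information only enters through the averaged term (the paper's $T_{K_\tau}g$, sent to $0$ as $\tau\to\infty$ using $q>2$). The worse of the two exponents exceeds $\max\{p,p'\}$, which the iteration needs (see below), only for $2-\frac{2}{4d+3}<p<2+\frac{2}{4d+1}$.

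The missing idea is to give the trajectories a free growth exponent $\beta$ (the paper uses $r^\beta(\sin\log r,\cos\log r)$). This places the integrated kernels in $\L^{\theta_v,\infty}$ and $\L^{\theta_0,\infty}$ with $\theta_v=\frac{\homd}{\homd+1-\beta}$, $\theta_0=\frac{\homd}{\homd+\beta-2}$, $\homd=(2\beta-1)d+1$. One then chooses $\beta$ so that both terms land in the same space $\L^q$, with $q=p\frac{4d+2}{d(p+2)}$. For $p\neq2$ this $\beta$ is neither $\frac32$ nor the value $1+\frac1p$ coming from the natural scaling $(t,x,v)\mapsto(\rho^pt,\rho^{1+p}x,\rho v)$ of the equation.

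The constraints $\beta>1$ and $\beta<2$ are equivalent to $q>p$ and $q>p'$, i.e.\ to $p<2+\frac2d$ and $p>2-\frac{2}{3d+2}$. This is where the restrictions on $p$ originate, not from dominance of the $\L^\infty_t\L^2$ information or from a gain above $2$ (which with this $q$ would only require $p>2-\frac{2}{d+1}$). In particular, your target $q>\max\{2,p\}$ is too weak for $p<2$. In the $\L^2$ recursion the $G$-term contributes only $\|\nabla_v w\|_{\L^p}^{p-1}$, and the resulting exponent $\frac{2(p-1)}{p}+1-\frac2q$ exceeds $1$ exactly when $q>p'$.

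Two further points. First, with an $\L^1$ source your target inequality is false for $d\ge2$. Take $G=0$, $g=\chi(t)\phi(v)\psi(x/\epsilon)\ge0$ and $h=(\partial_t+v\cdot\nabla_x)g$. Then $\|g\|_{\L^q}\sim\epsilon^{d/q}$, while $\|g\|_{\L^\infty_t\L^2}\sim\epsilon^{d/2}$, $\|\nabla_v g\|_{\L^p}\sim\epsilon^{d/p}$ and $\|h\|_{\L^1}\lesssim\epsilon^{d-1}$. Since $\theta_1+\theta_2+\theta_3=1$ is forced by homogeneity, the right-hand side is $O(\epsilon^{\min\{d/2,\,d/p\}})$, which is much smaller than $\epsilon^{d/q}$ because $q>\max\{2,p\}$. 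The lower-order terms $(f-k)_+(\partial_t+v\cdot\nabla_x)\varphi$ and $G\cdot\nabla_v\varphi$ must instead be placed in $\L^r$ with $r=p\frac{4d+2}{p(3d+2)-2d}$, where $1<r<\min\{p,p'\}$. The subsolution defect is disposed of by running the trajectories backward in time, not by an $\L^1$ bound.

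Second, for $p>2$ your Step~3 does not close as written. The term $\int(f-k)_+^p|\nabla_v\varphi|^p$ of the energy estimate, and $(f-k)_+\nabla_v\varphi$ inside $\nabla_v g$, cannot be bounded by $Y_n=\int(f-k_n)_+^2$ via H\"older on a bounded set. Bounding them through the $\L^q$ norm is circular, since that norm is itself estimated through the same $\L^p$ quantity on a larger cylinder. The paper therefore iterates $M_n=\int w_n^p$ for $p\ge2$, controlling the $\L^2$ term by $\int w_n^2\le 2^{n(p-2)}M_{n-1}$ via the level-set bound and obtaining the exponent $1+\delta$ with $\delta=1-\frac pq>0$. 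It iterates $Y_n$ only for $p<2$.
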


From the formal energy estimate we obtain control of \(\nabla_v f \in \L^p_{t,x,v}\)
and using the equation~\eqref{eq:nonlinear-laplace} again, we find that
\begin{equation*}
  (\partial_t + v \cdot \nabla_x)f = \nabla_v \cdot S_0
\end{equation*}
for $S_0 \in \L^{\frac{p}{p-1}}_{t,x,v}$.

The key step is then to establish a corresponding kinetic Gagliardo--Nirenberg
estimate with distinct Lebesgue exponents. We believe this result is of independent interest.

\begin{theorem}\label{thm:embedding}
  Let \(p,\mu\in(1,\infty)\) and define
  \[
    \frac1q
    :=
    \frac{1}{4d+2}
    \left(
    \frac{3d+1}{p}
    +
    \frac{d+1}{\mu}
    -1
    \right).
  \]
  Assume that
  \[
    q>\max\{2,p,\mu\}.
  \]
  Then there exists a constant \(C=C(d,p,\mu)>0\) such that for any
  \(f\in \L^2(\R^{1+2d})\) with
  \(\nabla_v f\in \L^p(\R^{1+2d})\) and satisfying
  \[
    (\partial_t+v\cdot\nabla_x)f=\nabla_v\cdot S_0
  \]
  in the distributional sense for some
  \(S_0\in \L^\mu(\R^{1+2d};\R^d)\), it holds that
  \begin{equation} \label{kinGagNir}
    \|f\|_{\L^q_{t,x,v}}
    \le
    C
    \|\nabla_v f\|_{\L^p_{t,x,v}}^{\frac{3d+1}{4d+2}}
    \|S_0\|_{\L^\mu_{t,x,v}}^{\frac{d+1}{4d+2}} .
  \end{equation}
\end{theorem}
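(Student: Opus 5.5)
We prove \eqref{kinGagNir} with the method of kinetic trajectories. The first step is a scaling check. Under the kinetic dilation $f_r(t,x,v)=f(r^2t,r^3x,rv)$ one has $S_{0,r}(t,x,v)=rS_0(r^2t,r^3x,rv)$, and the homogeneous dimension is $Q=4d+2$. Both sides of \eqref{kinGagNir} then scale consistently. This scaling fixes $q$, and it lets us reduce to a weak-type/interpolation argument with no loss of constants. Under the additional group invariance by translations, the exponents $\frac{3d+1}{4d+2}$ and $\frac{d+1}{4d+2}$ are exactly those forced by the two independent scalings: in $v$ at fixed $(t,x)$, and in the pair $(t,x)$.

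The core is a pointwise representation of $f$ along trajectories. For $z=(t,x,v)$ and a scale $r>0$, we connect $z$ to a ``far'' averaged region using a family of curves $\gamma_{z,\omega}$ that alternate between transport segments, with $\dot\gamma=(1,v,0)$, and velocity segments, with $\dot\gamma=(0,0,w)$. The curves are averaged over a parameter $\omega$ so that the change of variables $\omega\mapsto\gamma_{z,\omega}(s)$ has a Jacobian comparable to $s^{-Q}$, as in \cite{MR4875497,dietert2025criticaltrajectorieskineticgeometry}. Differentiating $f$ along the curves and using the equation, the transport part is handled by integrating by parts in $v$ against the smooth weight coming from the averaging. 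This gives
\[
|f(z)|\lesssim \int_{\R^{1+2d}} \frac{|\nabla_v f(\zeta)|}{d(z,\zeta)^{Q-1}}\,\mathrm d\zeta
+\int_{\R^{1+2d}} \frac{|S_0(\zeta)|}{d(z,\zeta)^{Q-1}}\,\mathrm d\zeta ,
\]
after sending the far region to infinity, which is legitimate since $f\in\L^2$. Here $d$ is the kinetic quasi-distance. This bound alone is not enough, because it uses the same homogeneity for both terms. The $S_0$ term must instead be treated at a different scale.

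We therefore split the two contributions anisotropically. Replace the isotropic scale by separate scales: $\rho$ in velocity, and $\tau$ in $(t,x)$ with $|x|\sim\tau^{3/2}$. The trajectory estimate then takes the truncated form
\[
|f(z)|\lesssim \rho\, M_p(\nabla_v f)(z)+\rho^{-1}\tau\, M_\mu(S_0)(z)+(\text{tail})\,\rho^{-\frac{Q}{2}}\|f\|_{\L^2},
\]
where $M_p$ and $M_\mu$ are suitable fractional maximal or Riesz-type operators adapted to the respective exponents. We optimise in $\rho$ and $\tau$ pointwise, in a Hedberg-type argument. Taking weak $\L^q$ norms and applying Hardy--Littlewood--Sobolev on the homogeneous group gives a weak-type version of \eqref{kinGagNir}. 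The assumption $q>\max\{2,p,\mu\}$ is used here: it makes the fractional integrals map into $\L^q$ with $q$ above the input exponents, and it lets the $\L^2$ tail term vanish in the limit.

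The strong inequality follows by a truncation argument. Apply the weak estimate to the level truncations $(|f|-k)_+\wedge k$ on dyadic levels. This requires checking that the truncations still satisfy a transport equation of the same form, with $S_0$ localized up to a nonnegative measure, which has a good sign for subsolution-type truncations. Summing over levels then upgrades weak $\L^q$ to $\L^q$, as in the Maz'ya truncation trick.

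The main obstacle is the anisotropic trajectory estimate. We must design curves whose velocity increments are of size $\rho$ while their spatial and temporal displacement is governed by $\tau$, and still keep a nondegenerate Jacobian. If the two scales cannot be decoupled cleanly, the fallback is to use the isotropic representation twice, once per term. In that case we would interpolate via the scaling $f\mapsto f(\cdot,\cdot,\lambda v)$, using $\lambda$ to balance $\|\nabla_v f\|_p$ against $\|S_0\|_\mu$.
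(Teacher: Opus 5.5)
There is a genuine gap, and it is exactly the step you flag as the main obstacle. With the standard trajectories the kernels acting on $\nabla_v f$ and on $S_0$ have the same homogeneity. So for $p\neq\mu$ the two pieces of $f$ land in different spaces, $1/q_1=1/p-1/(4d+2)$ and $1/q_2=1/\mu-1/(4d+2)$, and $f\in\L^{q_1}+\L^{q_2}$ gives no bound on $\|f\|_{\L^q}$. You never construct the anisotropic representation that would fix this, and the one you sketch is not consistent. Along a curve tangent to $\partial_t+v\cdot\nabla_x$ and $\nabla_v$, velocity increments of size $\rho$ over a time $\tau$ displace $x-tv$ by about $\tau\rho$. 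So the $x$-scale is tied to $\tau\rho$ and cannot be prescribed as $\tau^{3/2}$ independently of $\rho$.

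The missing idea is the paper's: keep a one-parameter self-similar family of trajectories but change its homogeneity, taking $g_1(r)=r^\beta\sin\log r$, $g_2(r)=r^\beta\cos\log r$ with $\beta\in(1,2)$. Then along the curves $t\sim r$, $x\sim r^\beta$, $v\sim r^{\beta-1}$, and $\det\cW(r)=c_0r^{(2\beta-1)d}$. The homogeneous dimension becomes $\homd=(2\beta-1)d+1$. The bounds $|G^v_r|\lesssim r^{\beta-2-\homd}$ and $|G^0_r|\lesssim r^{1-\beta-\homd}$ on sets of measure $\approx r^{\homd}$ put $\int_0^\tau G^v_r\,\mathrm dr\in\L^{\theta_v,\infty}$ and $\int_0^\tau G^0_r\,\mathrm dr\in\L^{\theta_0,\infty}$ uniformly in $\tau$, with $1/\theta_v=1+(1-\beta)/\homd$ and $1/\theta_0=1+(\beta-2)/\homd$. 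Choosing $\beta$ so that $1/p+(1-\beta)/\homd=1/\mu+(\beta-2)/\homd$ sends both pieces into the same $\L^q$. This common exponent is exactly the $q$ of the theorem, and $\beta\in(1,2)$ is equivalent to $q>\max\{p,\mu\}$. The weak Young inequality then gives the strong $\L^q$ bound directly, $q>2$ kills $T_{K_\tau}f$ as $\tau\to\infty$, and the multiplicative form follows from the scaling $(\nu t,\nu x,v)$.

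Your closing steps also fail as written.

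\textbf{The tail term.} In a Hedberg argument, the scales above the cut-off must be bounded by H\"older in terms of $\|\nabla_v f\|_{\L^p}$ and $\|S_0\|_{\L^\mu}$. A tail in $\|f\|_{\L^2}$ cannot be optimised away and leaves $\|f\|_{\L^2}$ on the right-hand side.

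\textbf{The truncation step.} The Maz'ya upgrade does not transfer to this equation. Formally $(\partial_t+v\cdot\nabla_x)\phi(f)=\nabla_v\cdot(\phi'(f)S_0)-\phi''(f)\,S_0\cdot\nabla_v f$. For piecewise linear $\phi$ the last term is a measure on level sets. It is not of the form $\nabla_v\cdot S$ with $S\in\L^\mu$, and it has no sign for general $S_0$. Even if $S_0\cdot\nabla_v f\ge 0$, the cuts $(\cdot-k)_+$ and $\cdot\wedge k$ produce opposite signs. The trajectory representation can absorb only one sign, through the time orientation of the trajectories.

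\textbf{The fallback scaling.} Your fallback uses $f\mapsto f(\cdot,\cdot,\lambda v)$, which is not a symmetry of $\partial_t+v\cdot\nabla_x$. The admissible scalings are $(t,\lambda x,\lambda v)$ and $(\nu t,\nu x,v)$. Even with these, interpolating the two isotropic bounds only controls a $K$-functional, i.e.\ gives $\L^{q,\infty}$, which leads back to the truncation step.

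Your two-scale intuition can be made to work in a different way, at least for $p,\mu<4d+2$. Apply the standard representation to $f(\nu t,\nu x,v)$. Bound the near scales by a two-parameter maximal function, which is bounded on $\L^p$ after conjugating with free transport, and the far scales by H\"older. Then optimise pointwise in both the cut-off and $\nu$; H\"older in the two maximal functions gives the strong estimate. The paper's tuning of $\beta$ avoids the multi-parameter maximal function and needs only the one-parameter weak Young inequality.
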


For \(\mu=p'=\frac p{p-1}\), the relation above reduces to
\[
  q=p\,\frac{4d+2}{d(p+2)},
\]
and the admissibility condition $q>\max\{2,p,p'\}\,(=\max\{p,p'\})$ is equivalent to
\[
  2-\frac2{3d+2}<p<2+\frac2d.
\]

\begin{remark}[Scaling considerations]\label{rem:scaling-embedding}
  The interpolation inequality \eqref{kinGagNir} is sharp by scaling.  Indeed, suppose we have
  an interpolation estimate of the form
  \begin{equation} \label{geninter}
    \norm{f}_q \lesssim \norm{\nabla_v f}_p^{\alpha}\; \norm{S_0}_\mu^{1-\alpha}
  \end{equation}
  for some \(\alpha \in [0,1]\) and integrabilities \(q,p,\mu\).  We can now scale the
  function \(f\) in two ways:
  \begin{equation*}
    f_{\lambda}(t,x,v) = f(t,\lambda x, \lambda v)\qquad\text{and}\qquad
    f_{\nu}(t,x,v) = f(\nu t,\nu x, v).
  \end{equation*}
  Applying the inequality \eqref{geninter} after the rescaling yields the conditions
  \begin{equation*}
    \lambda^{-\frac{2d}{q}} = \lambda^{\left( 1-\frac{2d}{p} \right)\alpha}
    \lambda^{\left(-1-\frac{2d}{\mu}  \right)(1-\alpha)},\quad \lambda>0,
  \end{equation*}
  and
  \begin{equation*}
    \nu^{-\frac{(d+1)}{q}}
    = \nu^{- \frac{(d+1)}{p} \alpha}
    \nu^{\left( 1 - \frac{d+1}{\mu} \right)(1-\alpha)},\quad \nu>0.
  \end{equation*}
  Solving these two identities for $\alpha$ and $q$ gives
  \[
    \alpha=\frac{3d+1}{4d+2},
    \qquad
    1-\alpha=\frac{d+1}{4d+2},
  \]
  and the integrability relation
  \[
    \frac1q
    =
    \frac{1}{4d+2}
    \left(
    \frac{3d+1}{p}
    +
    \frac{d+1}{\mu}
    -1
    \right).
  \]
\end{remark}

\begin{remark}[Restriction on \(p\) in Theorem~\ref{thm:embedding}]
  In the proof, we express \(f\) as a sum of kinetic convolutions involving
  \(\nabla_v f\) and \(S_0\). The Young inequalities for these two terms require
  \[
    q>p,
    \qquad
    q>\mu.
  \]
  These two conditions are equivalent to
  \[
    -\frac1{d+1}<\frac1p-\frac1\mu<\frac1{3d+1}.
  \]
  Moreover, since the global estimate starts from \(f\in \L^2\) and the mollified
  term is sent to zero as \(\tau\to\infty\), we impose \(q>2\). Thus the concise
  admissibility condition is
  \[
    q>\max\{2,p,\mu\}.
  \]
  In the localised setting, we expect that this bound can be relaxed, since on a finite-measure set one can control \(\norm{S_0}_{L^{\tilde \mu}}\) by \(\norm{S_0}_{L^\mu}\) for any \(\tilde \mu\le \mu\), and then run the estimate with \(\tilde \mu\) in place of \(\mu\).
\end{remark}

\begin{remark}
  Theorem~\ref{thm:embedding} is stated with independent integrability exponents
  for \(\nabla_v f\) and \(S_0\). The case relevant for the kinetic \(p\)-Laplace
  equation corresponds to \(\mu=p'\), since then
  \(S_0=A(t,x,v,f,\nabla_v f)\) has the growth
  \(|S_0|\lesssim |\nabla_v f|^{p-1}\).
\end{remark}

\begin{remark}
  The proof can be adapted to cover mixed Lebesgue spaces
  \(\L^{q_1}_{t}\L^{q_2}_{x,v}\).
\end{remark}

A related question concerns the transfer of regularity under such control on the transport term and the velocity gradient. We quantify this effect using the following definition
(similar to \cite{hoermander-1967-hypoelliptic}):
\begin{definition}\label{def:besov-x}
  For $s\in (0,1)$ and $q\in [1,\infty]$, the homogeneous
  (${\Bdot^{s}_{q\infty,x}\L^q_{t,v}}$\ --) Besov norm
  in the $x$-variable is defined as
  \[
    \norm{f}_{\Bdot^{s}_{q\infty,x}\L^q_{t,v}}
    :=
    \sup_{h\neq 0}\frac{\|\Delta_x^h f\|_{\L^q_{t,x,v}}}{|h|^s},
  \]
  where the finite difference operator in the $x$-variable is given by
  \begin{equation}\label{eq:definition-diff-x}
    [\Delta_x^h f](t,x,v) := f(t,x+h,v)-f(t,x,v).
  \end{equation}
\end{definition}

Through the hypoelliptic interaction
of the transport term and the diffusion, we gain regularity in \(x\) as reflected
by the following result:
\begin{theorem}\label{thm:transfer-of-regularity}
  Let
  \[
    p \in \Bigl(2-\frac{2}{3d+2},\,2+\frac{2}{d}\Bigr),
    \qquad
    p'=\frac{p}{p-1},
    \qquad
    \bar q:=p\,\frac{4d+2}{d(p+2)}.
  \]
  Then, for every
  \[
    q\in \Bigl(\max\{p,p'\},\,\bar q\Bigr)
  \]
  there exists $C=C(d,p,q)>0$ such that the following holds.
  If $f\in \L^2(\R^{1+2d})$ with $\nabla_v f\in \L^p(\R^{1+2d})$ satisfies
  \[
    (\partial_t+v\cdot \nabla_x)f=\nabla_v\cdot S_0
  \]
  in the sense of distributions for some $S_0 \in \L^{p'}(\R^{1+2d};\R^d)$, then
  \[
    \norm{f}_{\Bdot^{s(q)}_{q\infty,x}\L^q_{t,v}}
    \le
    C
    \|\nabla_v f\|_{\L^p_{t,x,v}}^{\alpha(q)}
    \|S_0\|_{\L^{p'}_{t,x,v}}^{1-\alpha(q)}
  \]
  where
  \[
    s(q)=\frac{p(4d+2)-qd(p+2)}{q\,[d(p-2)+2p+2]}
  \]
  and
  \[
    \alpha(q) =
    \frac{3d+1}{4d+2} - \frac{(d-1)}{4d+2} s(q)
    = \frac{q[p(1+d)-d+1]-(d-1)p}{q\,[d(p-2)+2p+2]}.
  \]
\end{theorem}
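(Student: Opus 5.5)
The plan is to rerun the kinetic-trajectory representation used for Theorem~\ref{thm:embedding}, but stop it at a finite kinetic scale instead of sending it to infinity. Concretely, fix a smooth compactly supported averaging kernel and let \(f_r\) denote the kinetic mollification of \(f\) at scale \(r>0\): time scale \(r^2\), velocity scale \(r\), space scale \(r^3\), along the transport direction. I split
\[
  \Delta_x^h f = \Delta_x^h (f-f_r) + \Delta_x^h f_r .
\]

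\textbf{Step 1: the rough part.} Since \(\|\Delta_x^h g\|_q\le 2\|g\|_q\), it suffices to bound \(\|f-f_r\|_{\L^q}\). The representation from the proof of Theorem~\ref{thm:embedding} writes \(f-f_r\) as a sum of kinetic convolutions against \(\nabla_v f\) and against \(S_0\). The \(S_0\)-term comes from \((\partial_t+v\cdot\nabla_x)f=\nabla_v\cdot S_0\) after integrating by parts in \(v\) along the trajectories. In that proof the kernels are integrated over all scales \(\tau\in(0,\infty)\); here they are truncated to \(\tau\in(0,r)\). Young's inequality then gives
\[
  \|f-f_r\|_q\lesssim r^{a}\|\nabla_v f\|_p+r^{b}\|S_0\|_{p'} .
\]
Because \(q<\bar q\) is strictly subcritical, the truncated kernels are integrable in the right Lebesgue space and the exponents \(a,b\) are strictly positive. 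The condition \(q>\max\{p,p'\}\) is exactly what Young's inequality needs, as in Theorem~\ref{thm:embedding}.

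\textbf{Step 2: the smooth part.} I use \(\|\Delta_x^h f_r\|_q\le |h|\,\|\nabla_x f_r\|_q\). The main obstacle is that \(f\) itself is only known in \(\L^2\), so \(\nabla_x f_r\) cannot simply be estimated by putting the derivative on the kernel and using \(\|f\|_q\). Instead I use the commutator identity
\[
  \nabla_x=[\nabla_v,\partial_t+v\cdot\nabla_x] .
\]
This lets me write \(\nabla_x f_r\) as kernel derivatives acting on \(\nabla_v f\) and on \((\partial_t+v\cdot\nabla_x)f=\nabla_v\cdot S_0\), with the divergence moved onto the kernel. Scaling the kernel derivatives, each costing the corresponding power of \(r\) (so \(r^{-3}\) per \(x\)-derivative), then gives
\[
  \|\nabla_x f_r\|_q\lesssim r^{a-3}\|\nabla_v f\|_p+r^{b-3}\|S_0\|_{p'} .
\]
No tail term in \(f\) appears, and this is exactly why \(f\in\L^2\) enters only qualitatively, to justify the representation.

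\textbf{Step 3: optimisation.} Combining the two steps,
\[
  \|\Delta_x^h f\|_q\lesssim \bigl(r^{a}+|h|r^{a-3}\bigr)\|\nabla_v f\|_p+\bigl(r^{b}+|h|r^{b-3}\bigr)\|S_0\|_{p'} .
\]
Before choosing \(r\), I use the two-parameter scaling of Remark~\ref{rem:scaling-embedding}, replacing \(f\) by \(f(\nu t,\lambda\nu x,\lambda v)\). This normalises \(\|\nabla_v f\|_p\) and \(\|S_0\|_{p'}\) so that both terms balance. Choosing \(r\sim|h|^{1/3}\) and undoing the scaling yields a bound \(|h|^{s}\|\nabla_v f\|_p^{\alpha}\|S_0\|_{p'}^{1-\alpha}\). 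The exponents \(s=s(q)\) and \(\alpha=\alpha(q)\) are then forced by the two scaling identities of Remark~\ref{rem:scaling-embedding}, with the extra factor \(|h|^{-s}\) inserted in the \(x\)-scaling. Checking that they match the stated formulas is routine algebra. The genuinely delicate points are keeping the powers \(a,b>0\) in Step~1 and the commutator bookkeeping in Step~2.
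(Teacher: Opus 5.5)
\textbf{There is a genuine gap in Step~1.} It comes from your choice of the standard kinetic scaling (time $r^2$, velocity $r$, space $r^3$). With that scaling, both small-scale maps $\nabla_v f\mapsto f-f_r$ and $S_0\mapsto f-f_r$ (via $(\partial_t+v\cdot\nabla_x)^{-1}\nabla_v\cdot$) have order $-1$ in homogeneous dimension $4d+2$. Consequently the truncated kernels lie in the $\L^\theta$ space required by Young's inequality, with a positive power of $r$, only under two separate conditions:
\[
  \frac1q>\frac1p-\frac1{4d+2}\quad\text{for the }\nabla_v f\text{-term},\qquad
  \frac1q>\frac1{p'}-\frac1{4d+2}\quad\text{for the }S_0\text{-term}.
\]
Beyond these thresholds the $\tau$-integral over $(0,r)$ is too singular as $\tau\to0$, and your $a$ or $b$ becomes nonpositive.

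These thresholds are not $\bar q$. For $p>2$ one has $\frac1{p'}-\frac1{4d+2}-\frac1{\bar q}=\frac{(3d+1)(p-2)}{p(4d+2)}>0$. For $p<2$ one has $\frac1p-\frac1{4d+2}-\frac1{\bar q}=\frac{(d+1)(2-p)}{p(4d+2)}>0$. So your sentence ``$q<\bar q$ is strictly subcritical, hence $a,b>0$'' is false for every $p\neq2$. Your argument only reaches $q<\bigl(\max\{\frac1p,\frac1{p'}\}-\frac1{4d+2}\bigr)^{-1}$. This leaves no admissible $q>\max\{p,p'\}$ at all when $p\ge 2+\frac{2}{4d+1}$ or $p\le 2-\frac{2}{4d+3}$, which is a nontrivial part of the theorem's range. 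The two-parameter rescaling in Step~3 cannot repair this. It only changes the function to which the fixed estimate of Step~1 is applied; it does not change the small-scale singularity of the kernels.

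\textbf{The missing idea} is the $p$-adapted trajectory scaling of Section~2: time $\sim r$, space $\sim r^\beta$, velocity $\sim r^{\beta-1}$, with $\beta=\beta_{p,p'}=\frac{d(p-2)+2p+2}{2(d(p-2)+p)}$ from \eqref{betaChoice}. This $\beta$ is chosen exactly so that the $\nabla_v f$-kernel and the $S_0$-kernel have the same critical exponent, namely $\bar q$. With it, both truncated kernels carry the same positive power of $r$ for every $q\in(\max\{p,p'\},\bar q)$.

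With this change your scheme does go through. An $x$-derivative on $K_r$ then costs $r^{-\beta}$, the commutator terms $T_{(\partial_t+v\cdot\nabla_x)K_r}(\nabla_v f)$ and $T_{\nabla_v^2K_r}(S_0)$ scale consistently, and you take $r\sim|h|^{1/\beta}$.

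The repaired argument is a genuinely different route from the paper's:
\begin{itemize}
\item The paper puts $\Delta_x^h$ onto the kernels at every scale (Lemmas~\ref{lem:diff-kinconv} and~\ref{lem:kernel-tor}), interpolating between the gradient bound for $|h|\le r^\beta$ and the size bound for $|h|>r^\beta$. It then sums all scales via weak Young and removes $T_{K_\tau}f$ by letting $\tau\to\infty$, using $f\in\L^2$.
\item Your repaired single-scale split instead treats the large scales with the commutator $\nabla_x=[\nabla_v,\partial_t+v\cdot\nabla_x]$. This avoids any tail term in $f$ and the limit $\tau\to\infty$.
\end{itemize}
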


\begin{remark}[Scaling considerations]
  Considering the scalings as in Remark~\ref{rem:scaling-embedding}, we find
  that for an inequality of the form
  \begin{equation*}
    \norm{f}_{\Bdot^{s(q)}_{q\infty,x}\L^q_{t,v}}
    \le
    \|\nabla_v f\|_{\L^p_{t,x,v}}^{\alpha}
    \|S_0\|_{\L^{r}_{t,x,v}}^{1-\alpha}
  \end{equation*}
  we get the scaling equations
  \begin{equation*}
    \lambda^{-\frac{2d}{q}+s} = \lambda^{\left( 1-\frac{2d}{p} \right)\alpha}
    \lambda^{\left(-1-\frac{2d}{r}  \right)(1-\alpha)}
  \end{equation*}
  and
  \begin{equation*}
    \nu^{-\frac{(d+1)}{q}+s}
    = \nu^{-\frac{(d+1)}{p} \alpha}
    \nu^{\left( 1 - \frac{d+1}{r} \right)(1-\alpha)}
  \end{equation*}
  for \(\lambda,\nu > 0\).  As before, this explains the obtained result.
\end{remark}

We can trade the obtained regularity \(s(q)\) against the gain of
integrability \(q\), i.e.\ \(s\) grows as we lower \(q\).  The lower bound
\(q > \max\{p,p'\}\) limits the range of \(s\) and we always have that
\(s < 1/3\).  In terms of \(p\), the upper limit for \(s\) has the maximal value
\(1/3\) at \(p=2\) and decreases for lower and larger \(p\).

To the best of our knowledge such a transfer-of-regularity result, where the
integrability exponents of the transport term and the diffusive term are not the
same, has not been observed before, even in the context of velocity averages.

By reducing \(s\), the gain of regularity also holds in terms of standard Sobolev
spaces in \(x\). Similarly to Theorem~\ref{thm:embedding}, source terms in
Lebesgue spaces with non-dual exponent can be handled analogously; the details
are straightforward and left to the reader.

The proof by kinetic trajectories is intrinsically local and can be formulated in such
way; see \cite{dietert2025criticaltrajectorieskineticgeometry}.  For a simple
presentation, we formulate and prove Theorems~\ref{thm:embedding} and
\ref{thm:transfer-of-regularity} over the whole space.

The paper is organised as follows.  In Section~\ref{sec:kintraj}, we will
introduce the kinetic trajectories and the related kinetic mollification
operator. This will be used in Section~\ref{sec:gain} to establish
Theorem~\ref{thm:embedding}.  The transfer-of-regularity result,
Theorem~\ref{thm:transfer-of-regularity}, is proved in the following
Section~\ref{sec:torHB}. In Section~\ref{sec:energy}, we introduce kinetic
cylinders adapted simultaneously to the kinetic geometry and the nonlinearity of
the $p$-Laplacian.  Finally, in Section~\ref{sec:iteration}, we iterate the
energy estimate together with the gain of integrability on kinetic cylinders adapted
to the intrinsic scale to prove local boundedness of subsolutions,
Theorem~\ref{thm:localbdd}.

We leave the study of the a priori H\"older estimate, as explained in
\cite{dibenedetto-1993-degenerate,urbano-2008} in the parabolic setting, as an
interesting open problem for future research.  We suggest that Poincar\'e
inequalities may also be proven via kinetic trajectories
\cite{MR4453413,MR4875497,anceschi2025poincareinequalityquantitativegiorgi}.

\subsection*{Notation}
In what follows $d\in \N$ denotes the dimension. The subscripts $t$, $x$, and $v$
refer to the first, the following $d$ and the last $d$ variables. These
subscripts are used for functions, for gradients $\nabla$, for Lebesgue $\L^p$, and
Sobolev $\H^s$ spaces in a way that should be clear from the context. We use
the standard notation for Lebesgue spaces $\L^p$ and the weak Lebesgue spaces
$\L^{p,\infty}$. The symbol $\fint$ denotes the averaged integral.  Whenever we write
$\lesssim,\gtrsim,\approx$, we estimate or compare by a universal constant. Moreover, in proofs,
$C$ denotes a universal constant, which may change from line to line.

\subsection*{Acknowledgements}
Lukas Niebel is funded by the Deutsche Forschungsgemeinschaft \linebreak (DFG, German Research Foundation) under Germany's Excellence Strategy EXC 2044/2 -- 390685587, Mathematics M\"unster: Dynamics--Geometry--Structure.

\section{Kinetic trajectories} \label{sec:kintraj}

Our approach is based on mollification in the direction of critical kinetic
trajectories as proposed in
\cite{dietert2025criticaltrajectorieskineticgeometry}.  The main novelty is
that we allow a different scaling to adapt to the nonlinearity.

All $2 \times 2$ matrices act on $\R^{2d}$ as their tensor with $\id_d$.

\subsection{Critical kinetic trajectories}

For a scaling parameter \(\beta \in (1,\infty)\) introduce the paths
\begin{align*}
  g_1(r) =  r^{\beta} \sin \log r
  \quad\text{ and }\quad
  g_2(r) =  r^{\beta} \cos \log r.
\end{align*}

Given $(t,x,v) \in \R^{1+2d}$, $m_0 \in \R \neq 0$ and $( m_1, m_2) \in \R^{2d}$ we define
the kinetic trajectory
\begin{equation*}
  \gamma^{\m} = (\gamma^{\m}_t,\gamma^{\m}_x,\gamma^{\m}_v) \colon [0,\infty) \times \R^{1+2d}\times \R^{1+2d} \to \R^{1+2d}
\end{equation*}
with $\m = (m_0,m_1,m_2)$ as
\begin{equation*}
  \gamma^{\mathbf m}(r;(t,x,v))
  :=
  \begin{pmatrix}
    t + m_0 r                             \\
    x + m_0 r v + m_1 g_1(r) + m_2 g_2(r) \\
    v + m_0^{-1} \left[ m_1 \dot g_1(r) + m_2 \dot g_2(r) \right]
  \end{pmatrix}.
\end{equation*}
Using matrices, it can be expressed as
\begin{equation*}
  \gamma^{\mathbf m}(r;(t,x,v))
  := \begin{pmatrix}
    t+m_0 r \\
    \E_{m_0}(r)\begin{pmatrix}
                 x \\
                 v
               \end{pmatrix} + \A_{m_0}(r)
    \begin{pmatrix}
      m_1 \\  m_2
    \end{pmatrix}
  \end{pmatrix}
\end{equation*}
where we have denoted, for $\delta \in \R$ and $r \in [0,\infty)$,
\begin{equation*}
  \A_{m_0}(r) = \D_{m_0}^{-1} \cW(r),\quad
  \cW(r) =
  \begin{pmatrix}
    g_1(r)      & g_2(r)      \\
    \dot g_1(r) & \dot g_2(r)
  \end{pmatrix},
  \quad \D_{\delta} :=
  \begin{pmatrix}
    \id_d & 0             \\
    0     & \delta\,\id_d
  \end{pmatrix},\quad
\end{equation*}
and
\begin{equation*}
  \E_\delta(r) =
  \begin{pmatrix}
    \id_d & \delta r \, \id_d \\
    0     & \id_d
  \end{pmatrix}.
\end{equation*}

\medskip

The mapping $\gamma^{\mathbf m}$ satisfies the following properties.
\begin{enumerate}[itemsep=0.2cm]
  \item[\hypertarget{link:M1}{\textbf{(M1)}}] It is a kinetic trajectory:
        $\dot \gamma_x ^{\m} = \dot \gamma_t ^{\m} \gamma_v ^{\m} $.
  \item[\hypertarget{link:M2}{\textbf{(M2)}}] We have
        $ \det(\cW(r)) = c_0 r^{(2\beta-1)d}$ for all $r \in [0,\infty)$ and some constant
        $c_0 \not = 0$.
  \item[\hypertarget{link:M3}{\textbf{(M3)}}] We have
        $\abs{((\A_{m_0}(r))^{-1})_{i;1}} \lesssim r^{-\beta}$
        and
        $\abs{((\A_{m_0}(r))^{-1})_{i;2}} \lesssim \abs{m_0} r^{1-\beta}$
        for $i=1,2$ and $r \in (0,\infty)$.
  \item[\hypertarget{link:M4}{\textbf{(M4)}}] The following bounds hold:
        \begin{equation*}
          \begin{cases}
            \abs{\dot{\gamma}_v^{\m}(r)}
            \lesssim \abs{m_0}^{-1} \left( \abs{m_1} + \abs{m_2} \right) r^{\beta-2}, \\
            \abs{\gamma_v^{\m}(r)-v}
            \lesssim \abs{m_0}^{-1} \left( \abs{m_1} + \abs{m_2} \right) r^{\beta-1}, \\
            \abs{\gamma_x^{\m}(r)-x-m_0vr} \lesssim
            \left( \abs{m_1} + \abs{m_2} \right) r^{\beta}
          \end{cases}
        \end{equation*}
        for all $r \in (0,\infty)$.
\end{enumerate}

The first property \mref1 ensures that the trajectory changes only along the
transport field \(\partial_t + v\cdot \nabla_x\) and the velocity directions \(\nabla_v\).  The second and third properties
show that the trajectories effectively span the space as \(\m\) varies.
They follow from direct calculations as in
\cite{dietert2025criticaltrajectorieskineticgeometry}.  The last property \mref4
controls the size and follows directly from the definition.

The resulting forcing of these trajectories is
\begin{equation*}
  \cF_{m_0}(r) = \begin{pmatrix}
    m_0^{-1} \ddot g_1(r)
     &
    m_0^{-1} \ddot g_2(r)
  \end{pmatrix} \in \R^{d \times 2d}.
\end{equation*}

\begin{remark}
  The choice $\beta = 3/2$ leads to the trajectories of our linear study in
  \cite[Section~2.1]{dietert2025criticaltrajectorieskineticgeometry}.
\end{remark}

\subsection{Phase-space kinetic mollification}

The kinetic $p$-Laplace equation \eqref{eq:kinetic-plaplace} satisfies Galilean invariance.  This is captured by the
kinetic translation group, defined for vectors $(t,x,v),(s,y,w) \in \R^{1+2d}$ via
\begin{equation}\label{eq:group-law}
  (t,x,v) \circ (s,y,w) = (t+s,x+y+sv,v+w),
\end{equation}
and inverses are given by $(t,x,v)^{-1} = (-t,-x+tv,-v)$.  The equation is then
left-invariant under the group action.

For a kernel $J \colon \R^{1+2d} \to \R^N$, we associate the integral operator
\(T_J\) along the kinetic translation group by
\begin{equation} \label{eq:defTJ}
  \begin{aligned}
    \bigl[T_J f\bigr](t,x,v)
     & = \int_{\R^{1+2d}} f((t,x,v) \circ \kvec u) \cdot J\left( \kvec u \right) \dx \kvec u      \\
     & = \int_{\R^{1+2d}} f(\kvec u) \cdot J\left( (t,x,v)^{-1} \circ \kvec u \right) \dx \kvec u
  \end{aligned}
\end{equation}
for $(t,x,v) \in \R^{1+2d}$ and $f \colon \R^{1+2d} \to \R^N$, where the integration
variable is \(\kvec u = (u_0,u_1,u_2) \in \R^{1+2d}\).  Explicitly, we find
$(t,x,v)^{-1} \circ \kvec u = (u_0-t,u_1-x-(u_0-t)v,u_2-v)$.

\medskip

For a smooth nonnegative mollification function $\psi \in \C_c^\infty(\R^{1+2d})$ with
support in
\begin{equation}\label{eq:supp-psi}
  \supp \psi \subset (-2,-1) \times B_1(0) \times B_1(0)
\end{equation}
and unit mass, we define the kinetic mollification along the trajectories for a
function \(f = f(t,x,v)\) as
\begin{equation*}
  (t,x,v) \mapsto \int_{\R^{1+2d}} f(\gamma^{\m}(\tau;(t,x,v)))\, \psi(\m) \dx \m,
\end{equation*}
where \(\tau \in (0,\infty)\) is an artificial time parameter.  The idea is to push the
coordinates along the trajectories indexed by \(\m\) and to average over those
possible parameters.

Using the kinetic group, we find that
\begin{equation*}
  \gamma^{\m}(\tau;(t,x,v))
  = (t,x,v) \circ \kvec u,\quad
  \text{where }
  \kvec u
  =
  \begin{pmatrix}
    m_0 \tau \\
    \A_{m_0}(\tau)
    \begin{pmatrix}
      m_1 \\ m_2
    \end{pmatrix}
  \end{pmatrix}.
\end{equation*}
A change of variables \(\m \to \kvec u\) brings the kinetic mollification into the
form \eqref{eq:defTJ} as
\begin{equation*}
  T_{K_\tau}(f)
  = \int_{\R^{1+2d}} f(\kvec u)\, K_\tau\left( (t,x,v)^{-1} \circ \kvec u \right) \dx \kvec u
  = \int_{\R^{1+2d}} f(\gamma^{\m}(\tau;(t,x,v)))\, \psi(\m) \dx \m
\end{equation*}
with
\begin{equation}\label{eq:kernel-conv}
  K_\tau(u_0,u_1,u_2)
  := c_0^{-1}
  \tau^{-\homd}\left(\frac{u_0}{\tau}\right)^d
  \psi\left( \frac{u_0}{\tau}, \A_{\frac{u_0}{\tau}}^{-1}(\tau)
  \begin{pmatrix}
      u_1 \\ u_2
    \end{pmatrix}
  \right)
\end{equation}
for the universal constant $c_0$ from property (M2) and \(\homd := (2\beta-1)d+1\).
The resulting operator \(T_{K_{\tau}}\) is the kinetic mollification.

\medskip

We obtain estimates on \(f\) by controlling the difference $f-T_{K_\tau}f$.  Let us
first consider the case of smooth and compactly supported functions
$f \in \C_c^\infty(\R^{1+2d})$, whose drift can be expressed as
\begin{equation} \label{eq:kinS0S1}
  (\partial_t +v \cdot \nabla_x)f = \nabla_v \cdot S_0 + S_1
\end{equation}
where $S_0 \in \C_c^\infty(\R^{1+2d};\R^d)$ and $S_1 \in \C_c^\infty(\R^{1+2d})$.

By the fundamental theorem of calculus, following the kinetic trajectories
yields
\begin{align*}
  f(t,x,v) & -[T_{K_\tau}(f)](t,x,v)
  =  \int_{\R^{1+2d}} \Big(f(t,x,v) - f(\gamma^{\m}(\tau;(t,x,v))) \Big)
  \psi(\m)  \dx \m                                                                                                                    \\
           & = - \int_{\R^{1+2d}} \int_0^\tau \frac{\dx }{\dx r} f(\gamma^{\m}(r;(t,x,v))) \dx r \ \psi(\m)  \dx \m                   \\
           & = - \int_{\R^{1+2d}} \int_0^\tau \Big(m_0 [(\partial_t +v\cdot \nabla_x) f](\gamma^{\m}(r;(t,x,v)))                      \\
           & \hspace{2.5cm} + \dot{\gamma}^{\m}_v(r;(t,x,v)) \cdot [\nabla_v f](\gamma^{\m}(r;(t,x,v)))\Big)\dx r \ \psi(\m)  \dx \m.
\end{align*}

For the first term, we substitute \((\partial_t + v \cdot \nabla_x) f\) using \eqref{eq:kinS0S1} and find
for the resulting \(S_0\)-term, by the change of variables
\(\kvec{\tilde m} = \gamma^{\m}(r;(t,x,v))\) and integration by parts,
\begin{align*}
   & \int_{\R^{1+2d}} \int_0^\tau m_0[\nabla_v \cdot S_0](\gamma^{\m}(r;(t,x,v)))\,
  \psi(\m) \dx r \dx \m                                                                          \\
   & =\int_0^\tau \int_{\R^{1+2d}}
  \left(\frac{\tilde{m}_0-t}{r}\right)^{1+d}
  r^{-\homd} c_0^{-1}
  [\nabla_{\tilde{m}_2} \cdot S_0](\tilde{m}_0,\tilde{m}_1,\tilde{m}_2)                          \\
   & \hspace{3cm} \cdot \psi\left( \frac{\tilde{m}_0-t}{r} ,\A_{\frac{\tilde{m}_0-t}{r}}(r)^{-1}
  \left( \binom{\tilde{m}_1}{\tilde{m}_2}
    - \E_{\frac{\tilde{m}_0-t}{r}}(r) \binom{x}{v}  \right) \right)
  \dx \kvec{\tilde m} \dx r                                                                      \\
   & =-\int_0^\tau \int_{\R^{1+2d}}
  \left( \frac{\tilde{m}_0-t}{r}\right)^{1+d}
  r^{-\homd}c_0^{-1}
  S_0(\tilde{m}_0,\tilde{m}_1,\tilde{m}_2)                                                       \\
   & \quad \cdot  \left[ [\nabla_{x,v}\psi]^T
    \left( \frac{\tilde{m}_0-t}{r} ,
    \A_{\frac{\tilde{m}_0-t}{r}}(r)^{-1}
    \left( \binom{\tilde{m}_1}{\tilde{m}_2}-\E_{\frac{\tilde{m}_0-t}{r}}(r) \binom{x}{v}  \right) \right)(\A_{ \frac{\tilde{m}_0-t}{r} }^{-1}(r))_{\cdot;2} \right]
  \dx \kvec{\tilde m} \dx r.
\end{align*}

Performing the same change of variables also in the term involving $S_1$ and in the
second term, we find the representation
\begin{equation} \label{eq:repfTtauf}
  (f-T_{K_\tau} f)(t,x,v) = \int_0^\tau\Big( [T_{G^0_r}(S_0)](t,x,v)
  + [T_{G^1_r}(S_1)](t,x,v)
  + [T_{G^v_r} (\nabla_v f)](t,x,v)\Big) \dx r
\end{equation}
with the kinetic kernels defined for $r \in (0,\infty) $ as
\medskip
\begin{enumerate}
  \item $G^0_r \colon \R^{1+2d} \to \R^d$,
        \begin{equation*}
          G^0_r(s,y,w)
          = c_0^{-1} \left(\frac{s}{r}\right)^{d+1} r^{-\homd}
            [\nabla_{x,v}\psi]^T\left( \frac{s}{r} ,\A_{\frac{s}{r}}(r)^{-1}  \binom{y}{w} \right)
          \left( \left( \A_{ \frac{s}{r} }(r) \right)^{-1}\right)_{\cdot;2}
        \end{equation*}
  \item $G^1_r \colon  \R^{1+2d} \to \R$,
        \begin{equation*}
          G^1_r(s,y,w)
          = -c_0^{-1}  \left(\frac{s}{r}\right)^{d+1} r^{-\homd}
          \psi\left( \frac{s}{r}, \A_{\frac{s}{r}}(r)^{-1} \binom{y}{w}\right) ,
        \end{equation*}
  \item and $G^v_r \colon \R^{1+2d} \to \R^d$,
        \begin{equation*}
          G^v_r(s,y,w)
          = -c_0^{-1} \left(\frac{s}{r}\right)^{d} r^{-\homd}
          \psi\left( \frac{s}{r}, \A_{\frac{s}{r}}(r)^{-1} \binom{y}{w} \right)
          \cF_{\frac{s}{r}}(r)\A_{\frac{s}{r}}(r)^{-1}   \binom{y}{w},
        \end{equation*}
\end{enumerate}
\medskip
for $(s,y,w) \in \R^{1+2d}$.

\begin{remark} \label{rem:directionoftime}
  We used the solution property, i.e.\ $f$ solves \eqref{eq:kinS0S1}, to control the $(\partial_t +v \cdot \nabla_x)$ direction of $f$ in terms of $S_0$ and $S_1$. If $f$ is only a sub- or supersolution, then the equality becomes an inequality, and one needs to adapt the direction of time of the kinetic trajectory. Here, we choose $\supp \psi \subset (-2,-1) \times B_1(0) \times B_1(0)$, and thus the trajectories are in the negative time direction and adapted to subsolutions.
\end{remark}

\subsection{\texorpdfstring{\(\L^p\)}{Lp} bounds for the kinetic mollification}

The kinetic integral operator \(T_J\) from \eqref{eq:defTJ} is not a symmetric
convolution.  Nevertheless, we have Young-type inequalities due to the underlying kinetic group structure.

\begin{lemma}\label{lem:young}
  Let $1\le p,q,\theta \le \infty$ with $\frac{1}{q}+1 = \frac{1}{\theta}+\frac{1}{p}$.  For any
  $J \in \L^{\theta}(\R^{1+2d};\R^N)$ we have
  \begin{equation*}
    \norm{T_J(f)}_{\L^q} \le \norm{J}_{\L^{\theta}} \norm{f}_{\L^p}
  \end{equation*}
  for all $f \in \L^p(\R^{1+2d};\R^N)$.
\end{lemma}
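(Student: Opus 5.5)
The plan is to reduce Lemma~\ref{lem:young} to the classical Young inequality on the unimodular group \((\R^{1+2d},\circ)\). The key structural fact is that Lebesgue measure is both left and right invariant for the group law \eqref{eq:group-law}, and also invariant under inversion. Indeed, for fixed \(\kvec z=(t,x,v)\), the map \(\kvec u\mapsto \kvec z\circ \kvec u = (t+u_0,x+u_1+u_0v,v+u_2)\) is affine with unipotent linear part, so its Jacobian is \(1\). The same holds for \(\kvec u\mapsto \kvec u\circ \kvec z=(u_0+t,u_1+x+tu_2,u_2+v)\), a shear in \((u_1,u_2)\), and for the inversion \((t,x,v)\mapsto(-t,-x+tv,-v)\). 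I will verify these three Jacobian computations first, since everything else follows from them.

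With this in hand, the second line of \eqref{eq:defTJ} writes \(T_Jf(\kvec z)=\int f(\kvec u)\cdot J(\kvec z^{-1}\circ \kvec u)\dx\kvec u\). Pointwise, \(|T_Jf(\kvec z)|\le \int |f(\kvec u)|\,|J(\kvec z^{-1}\circ\kvec u)|\dx \kvec u\), so it suffices to treat nonnegative scalar \(f,J\). I will then run the standard Hölder-splitting proof of Young's inequality. When \(1/q+1=1/\theta+1/p\), one writes
\[
|f(\kvec u)||J(\kvec z^{-1}\circ\kvec u)| = \bigl(|f|^{p}|J|^{\theta}\bigr)^{1/q}\,|f|^{p(1/p-1/q)}\,|J|^{\theta(1/\theta-1/q)},
\]
where the last two factors are evaluated at \(\kvec u\) and \(\kvec z^{-1}\circ \kvec u\) respectively. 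Hölder's inequality is applied with exponents \(q\), \(\frac{p q}{q-p}\) and \(\frac{\theta q}{q-\theta}\), whose reciprocals sum to \(1\) by the relation. Two integrals must be controlled uniformly in \(\kvec z\). The first is \(\int |J(\kvec z^{-1}\circ\kvec u)|^\theta\dx\kvec u=\norm{J}_{\L^\theta}^\theta\), by left invariance in \(\kvec u\). The second is \(\int|f|^p=\norm f_{\L^p}^p\). This gives
\[
|T_Jf(\kvec z)|^q\le \norm{J}_\theta^{q-\theta}\norm f_p^{q-p}\int |f(\kvec u)|^p|J(\kvec z^{-1}\circ \kvec u)|^\theta\dx \kvec u .
\]
Integrating in \(\kvec z\) and using Fubini, I need \(\int |J(\kvec z^{-1}\circ\kvec u)|^\theta \dx \kvec z=\norm J_\theta^\theta\) for fixed \(\kvec u\). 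This follows from the substitution \(\kvec w=\kvec z^{-1}\circ\kvec u\), i.e.\ \(\kvec z=\kvec u\circ \kvec w^{-1}\), which is measure preserving by combining inversion invariance with left invariance. The result is \(\norm{T_Jf}_q^q\le \norm J_\theta^q\norm f_p^q\).

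The endpoint cases need separate but standard treatment. If \(q=\infty\), plain Hölder with exponents \(p,\theta\) is used, together with left invariance. If \(\theta=1\) (so \(p=q\)), Minkowski's integral inequality in the first form of \eqref{eq:defTJ} applies, using right invariance \(\norm{f(\cdot\circ\kvec u)}_p=\norm f_p\). If \(p=1\), the splitting above still works after dropping the trivial factor. I do not expect a genuine obstacle here. The only point requiring care is the bookkeeping of which invariance (left, right, or inversion) each change of variables uses, since the group is non-commutative.
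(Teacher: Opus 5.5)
Your proof is correct and follows essentially the paper's route. The paper simply cites Young's inequality on locally compact groups (Grafakos, Theorem 1.2.12). Your argument is exactly the proof of that theorem written out for $(\R^{1+2d},\circ)$, with its hypotheses verified by hand: Lebesgue measure is left- and right-invariant and preserved by inversion, so that $T_Jf=f*\tilde J$ with $\tilde J(\kvec w)=J(\kvec w^{-1})$ and $\|\tilde J\|_{\L^\theta}=\|J\|_{\L^\theta}$.
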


\begin{proof}
  Use \cite[Theorem 1.2.12]{MR2445437}.
\end{proof}

Moreover, from the classical weak Young inequality we deduce the following
central estimate.

\begin{lemma}\label{lem:weakYoung}
  Let $1 < p,q,\theta < \infty$ with $\frac{1}{q}+1 = \frac{1}{\theta}+\frac{1}{p}$.  There
  exists \(C>0\) such that for any $J \in \L^{\theta,\infty}(\R^{1+2d};\R^N)$, we have
  \begin{equation*}
    \norm{T_J(f)}_{\L^q} \le C \norm{J}_{\L^{\theta,\infty}} \norm{f}_{\L^p}
  \end{equation*}
  for all $f \in \L^p(\R^{1+2d};\R^N)$.
\end{lemma}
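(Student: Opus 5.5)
The plan is to reduce Lemma~\ref{lem:weakYoung} to the classical weak Young inequality on the Euclidean space \(\R^{1+2d}\) by a measure-preserving change of variables that turns \(T_J\) into an ordinary convolution in some of the variables. Write
\[
  [T_J f](t,x,v) = \int_{\R^{1+2d}} f(u_0,u_1,u_2)\cdot J\bigl(u_0-t,\,u_1-x-(u_0-t)v,\,u_2-v\bigr)\dx \kvec u .
\]
For fixed \(v\) and \(u_2\), the dependence on \((t,x)\) and \((u_0,u_1)\) is not a convolution, because of the shear term \((u_0-t)v\). So the first approach is not to change variables directly, but to use the abstract form of the weak Young inequality on locally compact groups. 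The kinetic group \((\R^{1+2d},\circ)\) is unimodular, and Lebesgue measure is its bi-invariant Haar measure: left and right translations are affine maps with unipotent linear part, so they have Jacobian \(1\). Also \(T_J f(g)=\int f(g\circ \kvec u)J(\kvec u)\dx\kvec u\), which is a group convolution \(f * \tilde J\) with \(\tilde J(\kvec u)=J(\kvec u^{-1})\). Inversion also preserves Lebesgue measure, so \(\norm{\tilde J}_{\L^{\theta,\infty}}=\norm{J}_{\L^{\theta,\infty}}\).

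The key steps, in order, are as follows.

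\textbf{Step 1.} Verify the Haar and unimodularity facts and the identity \(T_J f = f*\tilde J\). This is the same bookkeeping that underlies Lemma~\ref{lem:young}, which was cited from \cite[Theorem 1.2.12]{MR2445437}.

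\textbf{Step 2.} Invoke the weak-type Young inequality on unimodular groups, for example \cite[Theorem 1.4.25]{MR2445437} in Grafakos. There the proof uses only the translation invariance of the measure and the distribution-function argument: split \(J=J\indicator_{|J|>s}+J\indicator_{|J|\le s}\), estimate the two pieces by the strong Young inequality with exponents \(1\) and \(\infty\), optimise in \(s\) to get a weak-type \((p,q)\) bound, and then apply Marcinkiewicz interpolation between two such weak-type estimates with nearby exponents to upgrade to the strong \(\L^q\) bound. This upgrade requires \(1<p<q<\infty\), which holds because \(\theta>1\). Vector-valued \(J,f\) with the dot product are handled componentwise, with a constant depending on \(N\).

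\textbf{Step 3.} If one prefers not to rely on the group version, reprove the estimate directly. Each ingredient uses only Lemma~\ref{lem:young} with exponent pairs \((1,p)\to p\) and \((p',p)\to\infty\), and these hold on the kinetic group by Step 1.

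I expect the main obstacle to be the interpolation step. The truncation argument directly gives only a weak-type bound \(\L^p\to\L^{q,\infty}\), so strong type must come from Marcinkiewicz interpolation. Applying it requires checking that the map \(f\mapsto T_J f\) is linear and that nearby pairs \((p_0,q_0)\), \((p_1,q_1)\) on the same line \(\frac1q+1=\frac1\theta+\frac1p\) are admissible. This works because \(p\) and \(q\) lie strictly inside \((1,\infty)\).
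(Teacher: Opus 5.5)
Your proposal is correct and takes the paper's route: the paper's proof is the one-line citation of \cite[Theorem 1.4.25]{MR2445437}, the weak-type Young inequality on locally compact groups whose Haar measure is invariant under inversion. Your checks supply exactly what that citation needs, namely that Lebesgue measure is left- and right-invariant and inversion-invariant for the kinetic group law, and that $T_J f = f*\tilde J$ with $\tilde J(\kvec u)=J(\kvec u^{-1})$. Your truncation plus off-diagonal Marcinkiewicz sketch is the standard proof of that cited theorem.
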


\begin{proof}
  Use \cite[Theorem 1.4.25]{MR2445437}.
\end{proof}

\medskip

From the definition of the kernels, we can directly deduce a bound on the
support and the supremum of the kernels.
\begin{lemma}\label{lem:kernel-support-sup}
  There exists $C>0$ such that for every $r>0$,
  \begin{multline*}
    \supp K_r \cup \supp G^0_r \cup \supp G^1_r \cup \supp G^v_r\\
    \subset
    \Bigl\{(s,y,w)\in \R^{1+2d} : -2r \le s \le -r,\ |y| \le C r^\beta,\ |w| \le Cr^{\beta-1}\Bigr\}.
  \end{multline*}
  Moreover,
  \begin{align*}
    |K_r|            & \lesssim r^{-\homd},
                     &
    |\nabla_y K_r|   & \lesssim r^{-\beta-\homd},
    \\
    |G^0_r|          & \lesssim r^{1-\beta-\homd},
                     &
    |\nabla_y G^0_r| & \lesssim r^{1-2\beta-\homd},
    \\
    |G^1_r|          & \lesssim r^{-\homd},
                     &
    |\nabla_y G^1_r| & \lesssim r^{-\beta-\homd},
    \\
    |G^v_r|          & \lesssim r^{\beta-2-\homd},
                     &
    |\nabla_y G^v_r| & \lesssim r^{-2-\homd}.
  \end{align*}
  Here $\nabla_y$ denotes differentiation in the spatial kernel variable $y$.
\end{lemma}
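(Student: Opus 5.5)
The plan is to read off every bound directly from the explicit formulas for $K_r$, $G^0_r$, $G^1_r$, $G^v_r$, using only the support of $\psi$ from \eqref{eq:supp-psi} and the properties \mref3, \mref4 of the matrices $\A_{m_0}(r)$.

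\emph{Support.} All four kernels contain the factor $\psi$ or $\nabla_{x,v}\psi$ evaluated at
\[
  \Bigl(\tfrac{s}{r},\,\A_{s/r}(r)^{-1}\tbinom{y}{w}\Bigr).
\]
This vanishes unless $s/r\in(-2,-1)$, that is $-2r\le s\le -r$. In addition we need $(m_1,m_2):=\A_{s/r}(r)^{-1}\binom{y}{w}\in B_1\times B_1$. Setting $m_0=s/r$, we have $\binom{y}{w}=\A_{m_0}(r)\binom{m_1}{m_2}$, which is exactly the displacement $\gamma^{\m}(r)-(t,x,v)$ computed at $v=0$. The third line of \mref4 then gives $|y|\lesssim (|m_1|+|m_2|)r^\beta\lesssim r^\beta$. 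The second line of \mref4, together with $|m_0|^{-1}\le 1$, gives $|w|\lesssim r^{\beta-1}$.

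\emph{Sup bounds.} On the support $|s/r|\in[1,2]$, so the prefactors $(s/r)^{d}$ and $(s/r)^{d+1}$ are $O(1)$, and $\psi$ and $\nabla\psi$ are bounded. Hence $|K_r|,|G^1_r|\lesssim r^{-\homd}$.

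For $G^0_r$, the extra factor is the second column block of $\A_{s/r}(r)^{-1}$. By \mref3 this is bounded by $|m_0|r^{1-\beta}\lesssim r^{1-\beta}$, giving $|G^0_r|\lesssim r^{1-\beta-\homd}$.

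For $G^v_r$, the extra factor is $\cF_{m_0}(r)\binom{m_1}{m_2}$ with $|m_i|\le1$. Since $|\ddot g_i(r)|\lesssim r^{\beta-2}$ and $|m_0|^{-1}\le1$, this is bounded by $r^{\beta-2}$, which is the claimed $|G^v_r|\lesssim r^{\beta-2-\homd}$.

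\emph{Gradient bounds in $y$.} Differentiating in $y$ hits the argument $\A_{s/r}(r)^{-1}\binom{y}{w}$, and the chain rule produces the first column block of $\A^{-1}$. By \mref3 this costs a factor $r^{-\beta}$. This gives the bounds for $\nabla_y K_r$, $\nabla_y G^1_r$ and $\nabla_y G^0_r$; the last uses $\nabla^2\psi$, which is bounded. For $G^v_r$, the $y$-derivative can fall on $\psi$, costing $r^{-\beta}$ times the previous bound. It can also fall on the linear factor $\cF\A^{-1}\binom{y}{w}$, giving $|\cF|\,|(\A^{-1})_{\cdot;1}|\lesssim r^{\beta-2}r^{-\beta}$. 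Both contributions are $\lesssim r^{-2-\homd}$.

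The only step needing care is making sure the $m_0$-dependence stays harmless. This holds because $|m_0|\in[1,2]$ on the support, so every power of $m_0$ or $m_0^{-1}$ appearing in \mref3, \mref4 and in $\cF_{m_0}$ is uniformly bounded. Beyond that, everything is a direct inspection of the formulas.
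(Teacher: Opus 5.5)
Your proposal is correct and follows the paper's approach: the paper only says the bounds follow directly from the kernel formulas and the support of $\psi$, and you supply exactly those details via \mref{3}, \mref{4} and $|m_0|\in[1,2]$ on the support. The bookkeeping also checks out: the first column block of $\A^{-1}$ costs $r^{-\beta}$, the second costs $r^{1-\beta}$, and $|\ddot g_i|\lesssim r^{\beta-2}$ gives the $G^v_r$ bounds, including both terms of $\nabla_y G^v_r$.
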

\begin{proof}
  The result follows directly from the definitions \eqref{eq:kernel-conv},
  \eqref{eq:repfTtauf} and the support of \(\psi\) in \eqref{eq:supp-psi}.
\end{proof}

We can directly conclude the following bound.
\begin{corollary}\label{lem:estK}
  Let $\theta \in [1,\infty]$. We have
  \begin{equation*}
    \norm{K_r}_{\L^\theta} \lesssim  r^{\homd\left( \frac{1}{\theta}-1 \right)},\quad r>0.
  \end{equation*}
\end{corollary}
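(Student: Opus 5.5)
The plan is to interpolate between the sup bound and the support bound of Lemma~\ref{lem:kernel-support-sup}: H\"older's inequality on a set of finite measure. For \(\theta=\infty\) the claim is exactly the pointwise estimate \(|K_r|\lesssim r^{-\homd}\) from Lemma~\ref{lem:kernel-support-sup}. For \(\theta<\infty\) we write
\[
  \norm{K_r}_{\L^\theta}^\theta
  = \int_{\supp K_r} |K_r|^\theta
  \le \norm{K_r}_{\L^\infty}^\theta \, \bigl|\supp K_r\bigr|,
\]
so it suffices to bound the Lebesgue measure of the support.

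By Lemma~\ref{lem:kernel-support-sup}, \(\supp K_r\) lies in the set \(\{-2r\le s\le -r,\ |y|\le Cr^\beta,\ |w|\le Cr^{\beta-1}\}\). This is a product of an interval of length \(r\), a ball of radius \(Cr^\beta\) in \(\R^d\), and a ball of radius \(Cr^{\beta-1}\) in \(\R^d\). Its measure is therefore
\[
  \lesssim r\cdot r^{\beta d}\cdot r^{(\beta-1)d} = r^{(2\beta-1)d+1} = r^{\homd}.
\]
Combining the two bounds gives
\[
  \norm{K_r}_{\L^\theta}^\theta \lesssim r^{-\homd\theta}\, r^{\homd},
\]
and taking the \(\theta\)-th root yields \(\norm{K_r}_{\L^\theta}\lesssim r^{\homd(\frac1\theta-1)}\). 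The implied constant depends only on \(d\), \(\beta\) and \(\psi\), and is uniform in \(\theta\) because \((C^\theta)^{1/\theta}=C\).

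There is no real obstacle here. The only point to verify is the exponent bookkeeping, namely that the measure of the support matches the homogeneous dimension \(\homd=(2\beta-1)d+1\). As a consistency check, \(\theta=1\) gives \(\norm{K_r}_{\L^1}\lesssim1\), which agrees with the unit-mass normalisation of \(\psi\) under the change of variables behind \eqref{eq:kernel-conv}.
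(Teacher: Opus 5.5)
Your proof is correct and is essentially the paper's argument: bound $|\supp K_r|\lesssim r^{\homd}$ using the support description in Lemma~\ref{lem:kernel-support-sup}, then combine this with the pointwise bound $|K_r|\lesssim r^{-\homd}$. Your exponent bookkeeping, $1+\beta d+(\beta-1)d=\homd$, and the uniformity of the constant in $\theta$ also check out.
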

\begin{proof}
  Note that, by Lemma \eqref{lem:kernel-support-sup}, \(|\supp K_r| \lesssim r^{\homd}\). Combining this with the upper bound for $K_r$ yields the result.
\end{proof}

We next derive bounds for the integrated kernels from \eqref{eq:repfTtauf}.

\begin{lemma}\label{lem:estintK}
  Let \(\beta \in (1,2)\) and
  \begin{equation*}
    \theta_0 = \frac{\homd}{\homd + \beta - 2},\quad
    \theta_1 = \frac{\homd}{\homd - 1},\quad
    \theta_v = \frac{\homd}{\homd + 1 - \beta}.
  \end{equation*}
  Then, uniformly in $\tau>0$,
  \begin{equation*}
    \norm{\int_0^{\tau} G^0_r \dx r}_{\L^{\theta_0,\infty}}
    \lesssim 1,\quad
    \norm{\int_0^{\tau} G^1_r \dx r}_{\L^{\theta_1,\infty}}
    \lesssim 1,\quad
    \norm{\int_0^{\tau} G^v_r \dx r}_{\L^{\theta_v,\infty}}
    \lesssim 1.
  \end{equation*}
\end{lemma}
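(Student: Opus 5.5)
The plan is to bound the weak-$\L^\theta$ quasinorm through the distribution function. Fix $\tau>0$ and let $G$ be one of $G^0,G^1,G^v$ with pointwise bound $|G_r|\lesssim r^{a-\homd}$, where $a=1-\beta$, $a=0$, $a=\beta-2$ respectively (Lemma~\ref{lem:kernel-support-sup}). The support of $G_r$ lies in the set
\[
  Z_r=\{-2r\le s\le -r,\ |y|\le Cr^\beta,\ |w|\le Cr^{\beta-1}\},
\]
whose measure is $\approx r\cdot r^{\beta d}\cdot r^{(\beta-1)d}=r^{\homd}$. A point $(s,y,w)$ with $s<0$ can only lie in $Z_r$ for $r\in[|s|/2,|s|]$. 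Hence
\[
  \Bigl|\int_0^\tau G_r(s,y,w)\dx r\Bigr|
  \lesssim \int_{|s|/2}^{|s|} r^{a-\homd}\indicator_{Z_r}(s,y,w)\dx r
  \lesssim |s|^{1+a-\homd}\,\indicator_{\tilde Z_{|s|}}(s,y,w),
\]
where $\tilde Z_\rho=\{|y|\le C\rho^\beta,\ |w|\le C\rho^{\beta-1}\}$ at time $s=-\rho$. This bound is uniform in $\tau$. The key point is that integrating in $r$ does not pile up overlapping scales: each point sees only a dyadic range of $r$. This gives a single homogeneous profile $H(s,y,w)=|s|^{1+a-\homd}\indicator_{\tilde Z_{|s|}}$.

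Next we compute the distribution function of $H$. The exponent $1+a-\homd$ is negative in all three cases since $\homd>2$ and $\beta\in(1,2)$. So $H>\lambda$ forces $|s|<\rho_\lambda:=\lambda^{-1/(\homd-1-a)}$. The measure of this set is therefore bounded by
\[
  \int_0^{\rho_\lambda}\rho^{\beta d+(\beta-1)d}\dx\rho\approx\rho_\lambda^{\homd}=\lambda^{-\homd/(\homd-1-a)}.
\]
Thus $H\in\L^{\theta,\infty}$ with $\theta=\homd/(\homd-1-a)$. This gives $\theta_0=\homd/(\homd+\beta-2)$ for $a=1-\beta$, $\theta_1=\homd/(\homd-1)$ for $a=0$, and $\theta_v=\homd/(\homd+1-\beta)$ for $a=\beta-2$, matching the statement. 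The implicit constants depend only on $d,\beta,\psi$.

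The main point to check carefully is the first step: that the support localisation $s\in[-2r,-r]$ indeed holds for all three kernels, and that the pointwise bounds of Lemma~\ref{lem:kernel-support-sup} are uniform. The dependence on $u_0/r$ in the matrices $\A_{u_0/r}(r)^{-1}$ and $\cF_{u_0/r}$ is harmless because $u_0/r\in[-2,-1]$ stays away from zero, so (M3) and (M4) give uniform constants. The restriction $\beta\in(1,2)$ is what guarantees $\theta_0,\theta_v\in(1,\infty)$, so that the weak Young inequality (Lemma~\ref{lem:weakYoung}) can later be applied.
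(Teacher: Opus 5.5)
Your proposal is correct and follows the paper's proof. As the paper does, you use the support localisation $r\in[|s|/2,|s|]$ together with the pointwise bounds of Lemma~\ref{lem:kernel-support-sup} to reduce each integrated kernel to the profile $|s|^{1+a-\homd}\indicator_{|y|\lesssim |s|^\beta,\,|w|\lesssim |s|^{\beta-1}}$, uniformly in $\tau$. The one addition is that you carry out explicitly the distribution-function computation (a set of measure $\approx \rho_\lambda^{\homd}$) that the paper leaves to ``the definition of the weak $\L^{\theta,\infty}$ spaces.''
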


\begin{proof}
  If we have \((s,y,w) \in \supp G^0_r\) with $r>0$, we find directly from the support that $s<0$,
  \(r \in (-s/2,-s)\) and \(|y| \lesssim (-s)^{\beta}\) as well as \(|w| \lesssim (-s)^{\beta-1}\).  Using the upper
  bound we therefore find that, uniformly in $\tau>0$,
  \begin{align*}
    \abs{ \left( \int_0^{\tau} G^0_r \dx r \right) (s,y,w) }
     & \lesssim
    \int_{\max\{0,-s/2\}}^{\min\{\tau,-s\}}
    r^{1-\beta-\homd} \dx r \,
    \indicator_{s < 0} \indicator_{|y|\lesssim (-s)^{\beta}, |w|\lesssim (-s)^{\beta-1}} \\[0.4em]
     & \lesssim
    (-s)^{2-\beta-\homd}
    \indicator_{s < 0}
    \indicator_{|y|\lesssim (-s)^{\beta}, |w|\lesssim (-s)^{\beta-1}}.
  \end{align*}
  The claimed bound for \(G^0_r\) then follows directly from the definition of
  the weak \(\L^{\theta,\infty}\) spaces.

  For the other kernels, this follows in exactly the same way, using the corresponding upper bounds
  in Lemma~\ref{lem:kernel-support-sup}.
\end{proof}

\medskip

For the transfer of regularity, we need to understand how the
difference operator \eqref{eq:definition-diff-x} acts with the integral operator.
\begin{lemma}\label{lem:diff-kinconv}
  Under suitable integrability conditions, we have that for every $J \colon \R^{1+2d}\to \R^N$, every $g \colon \R^{1+2d}\to \R^N$, and every $h\in \R^d$,
  \[
    \Delta_x^h[T_J(g)] = T_{\Delta_y^{-h}J}(g).
  \]
\end{lemma}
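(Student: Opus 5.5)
We verify the identity directly from the second form of the definition \eqref{eq:defTJ}, by a change of variables in the integration variable. The structural fact we need is that a pure spatial shift \((0,h,0)\) interacts with the group law \eqref{eq:group-law} in a simple way. For any \((t,x,v)\) and \(\kvec u=(u_0,u_1,u_2)\) we have
\begin{equation*}
  (t,x+h,v)^{-1}\circ \kvec u
  = (u_0-t,\; u_1-x-h-(u_0-t)v,\; u_2-v)
  = \bigl((t,x,v)^{-1}\circ\kvec u\bigr) + (0,-h,0).
\end{equation*}
This follows immediately from the explicit formula for \((t,x,v)^{-1}\circ\kvec u\) stated after \eqref{eq:defTJ}. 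The key point is that the \(x\)-variable enters this expression only additively, in the middle component. There is no interaction with \(v\) or \(t\) beyond the term \((u_0-t)v\), which does not involve \(x\).

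Using this, we compute
\begin{align*}
  [T_J g](t,x+h,v)
  &= \int_{\R^{1+2d}} g(\kvec u)\cdot J\bigl((t,x+h,v)^{-1}\circ\kvec u\bigr)\dx\kvec u\\
  &= \int_{\R^{1+2d}} g(\kvec u)\cdot J\bigl(((t,x,v)^{-1}\circ\kvec u)+(0,-h,0)\bigr)\dx\kvec u .
\end{align*}
Define the kernel \(\tilde J(s,y,w):=J(s,y-h,w)\). The last integral is then \([T_{\tilde J} g](t,x,v)\). Subtracting \([T_J g](t,x,v)\) and using linearity of \(J\mapsto T_J g\) gives
\begin{equation*}
  \Delta_x^h[T_Jg] = T_{\tilde J - J}\, g ,
  \qquad
  (\tilde J-J)(s,y,w) = J(s,y-h,w)-J(s,y,w) = [\Delta_y^{-h}J](s,y,w).
\end{equation*}
Here \(\Delta_y^{-h}\) is the finite difference \eqref{eq:definition-diff-x} applied in the kernel's spatial variable \(y\).

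The only point requiring care is the justification implicit in ``suitable integrability conditions''. We need the integrals defining \(T_J g\), \(T_{\tilde J}g\) and \(T_{\Delta_y^{-h}J}g\) to converge absolutely, at least for a.e.\ \((t,x,v)\), so that linearity and splitting of the integral are legitimate. It suffices, for instance, that \(J\in \L^\theta\) (or \(\L^{\theta,\infty}\)) and \(g\in\L^p\) as in Lemma~\ref{lem:young} or Lemma~\ref{lem:weakYoung}. Translation preserves these norms, so \(\tilde J\) lies in the same space as \(J\), and hence so does \(\Delta_y^{-h}J\).

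Alternatively, one can first prove the identity for \(g\in\C_c^\infty\) and bounded, compactly supported \(J\). It then extends by density using the Young-type bounds, since both sides are bounded operators in \(g\). I expect no genuine obstacle here: the main step is simply the algebraic observation that spatial translation commutes through the group law as a translation of the kernel argument.
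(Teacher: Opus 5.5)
Your proof is correct and is essentially the paper's own computation: you write out $(t,x+h,v)^{-1}\circ\kvec u$ explicitly, note that the shift by $h$ enters only the middle argument of $J$, and read off the kernel $\Delta_y^{-h}J(s,y,w)=J(s,y-h,w)-J(s,y,w)$. Your remarks on absolute convergence, using Lemma~\ref{lem:young} or Lemma~\ref{lem:weakYoung}, or alternatively a density argument, simply make explicit what the paper leaves under ``suitable integrability conditions''.
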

\begin{proof}
  Using \eqref{eq:defTJ}, we compute
  \begin{align*}
     & [T_J(g)](t,x+h,v)-[T_J(g)](t,x,v)                                     \\
     & = \int_{\R^{1+2d}} g(\kvec u)\cdot \Bigl[
    J\bigl(u_0-t,u_1-x-h-(u_0-t)v,u_2-v\bigr)                                \\
     & \hspace{3cm}-J\bigl(u_0-t,u_1-x-(u_0-t)v,u_2-v\bigr)\Bigr]\dx \kvec u \\
     & = [T_{\Delta_y^{-h}J}(g)](t,x,v).
    \qedhere
  \end{align*}
\end{proof}

By means of the gradient bounds from Lemma~\ref{lem:kernel-support-sup}, we can deduce similar
estimates for the difference.

\begin{lemma}\label{lem:kernel-tor}
  Let $\theta\in [1,\infty]$, \(s \in (0,1)\) and $h\in \R^d$.  Then
  \begin{equation*}
    \norm{\Delta_y^{-h}K_r}_{\L^\theta} \lesssim  |h|^s r^{\homd\left( \frac{1}{\theta}-1 \right)-\beta s},\quad r>0.
  \end{equation*}
  Moreover, if $s\in (0,\frac{1}{3})$, $\beta \in (\frac{1}{1-s},\frac{2}{1+s})$ and
  \begin{equation*}
    \theta_0(s) = \frac{\homd}{\homd + \beta - 2 + \beta s},\quad
    \theta_1(s) = \frac{\homd}{\homd - 1 + \beta s},\quad
    \theta_v(s) = \frac{\homd}{\homd + 1 - \beta + \beta s},
  \end{equation*}
  then, uniformly in $\tau>0$,
  \begin{align*}
    \norm{\int_0^{\tau} \Delta_y^{-h}G^0_r \dx r}_{\L^{\theta_0(s),\infty}}
     & \lesssim |h|^s, \\
    \norm{\int_0^{\tau} \Delta_y^{-h}G^1_r \dx r}_{\L^{\theta_1(s),\infty}}
     & \lesssim |h|^s, \\
    \norm{\int_0^{\tau} \Delta_y^{-h}G^v_r \dx r}_{\L^{\theta_v(s),\infty}}
     & \lesssim |h|^s.
  \end{align*}
\end{lemma}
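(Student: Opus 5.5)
The plan is to interpolate, kernel by kernel, between two pointwise bounds for the difference. For any kernel \(J_r\in\{K_r,G^0_r,G^1_r,G^v_r\}\) supported in the set \(\{-2r\le s\le -r,\ |y|\le Cr^\beta,\ |w|\le Cr^{\beta-1}\}\) of Lemma~\ref{lem:kernel-support-sup}, with \(|J_r|\lesssim r^{a}\) and \(|\nabla_y J_r|\lesssim r^{a-\beta}\), we have two bounds on \(\Delta_y^{-h}J_r\).

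The trivial bound is \(|\Delta_y^{-h}J_r|\le |J_r(\cdot,\cdot-h,\cdot)|+|J_r|\lesssim r^{a}\), supported in the union of the support and its \(h\)-translate. The mean value bound is \(|\Delta_y^{-h}J_r|\lesssim |h|\,r^{a-\beta}\), supported in \(\{-2r\le s\le-r,\ |y|\le Cr^\beta+|h|,\ |w|\le Cr^{\beta-1}\}\). Taking the geometric mean with weights \(1-s\) and \(s\) gives the pointwise estimate
\[
  |\Delta_y^{-h}J_r|\lesssim |h|^s r^{a-\beta s}\,\bigl(\indicator_{\supp J_r}+\indicator_{\supp J_r(\cdot,\cdot-h,\cdot)}\bigr).
\]
This holds because \(\min\{r^a,|h|r^{a-\beta}\}\le (r^a)^{1-s}(|h|r^{a-\beta})^s\). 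The support is a union of two sets, each of measure \(\lesssim r^{\homd}\). For \(K_r\), where \(a=-\homd\), integrating the \(\theta\)-th power gives
\[
  \norm{\Delta_y^{-h}K_r}_{\L^\theta}\lesssim |h|^s r^{-\homd-\beta s}r^{\homd/\theta},
\]
which is the first claim.

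For the integrated kernels, I follow the proof of Lemma~\ref{lem:estintK}. At a point \((s,y,w)\), only \(r\in(-s/2,-s)\) contributes, and there the bound is \(|h|^s r^{a-\beta s}\) with \(a=1-\beta-\homd\), \(-\homd\) or \(\beta-2-\homd\) respectively. Integrating in \(r\) gives the pointwise bound
\[
  \Bigl|\int_0^\tau\Delta_y^{-h}G_r\dx r\Bigr|(s,y,w)\lesssim |h|^s(-s)^{a+1-\beta s}\,\indicator_{s<0}\,\bigl(\indicator_{|y|\lesssim(-s)^\beta}+\indicator_{|y-h|\lesssim(-s)^\beta}\bigr)\indicator_{|w|\lesssim(-s)^{\beta-1}}.
\]
This is uniform in \(\tau\), and the \(r\)-integral converges because the range is a dyadic interval. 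The right-hand side is a sum of two translates in \(y\) of a function of the form \((-s)^{-b}\indicator_{\text{box of measure }(-s)^{\homd-1}}\). Here \(-b=a+1-\beta s\), so \(b>0\) is required, i.e.\ \(a+1-\beta s<0\).

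For such a function, the level set \(\{(-s)^{-b}>\lambda\}\) is \(\{-s<\lambda^{-1/b}\}\). Its measure is \(\int_0^{\lambda^{-1/b}}\sigma^{\homd-1}\dx\sigma\approx\lambda^{-\homd/b}\), so the function lies in \(\L^{\homd/b,\infty}\). Weak \(\L^\theta\) is quasi-normed, and translations preserve the quasi-norm, so the sum is fine. Computing \(b\) in each case gives the stated exponents:
\[
  b_0=\homd+\beta-2+\beta s,\qquad b_1=\homd-1+\beta s,\qquad b_v=\homd+1-\beta+\beta s.
\]

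The main obstacle is the bookkeeping of the exponent ranges, since the interpolation itself is routine. One needs \(b>0\) so that the profile decays in the right way, and one needs \(\theta(s)=\homd/b\in(1,\infty)\) so that \(\L^{\theta,\infty}\) is normable and Lemma~\ref{lem:weakYoung} applies later. Since \(\homd>0\), the upper condition \(\theta>1\) means \(b<\homd\). For \(G^0\) this is \(\beta(1+s)<2\), i.e.\ \(\beta<2/(1+s)\). For \(G^v\) it is \(\beta(1-s)>1\), i.e.\ \(\beta>1/(1-s)\). For \(G^1\) it is \(\beta s<1\), which follows from the other two. Positivity of \(b\) in each case follows from \(\homd\ge d+1\ge2\) and \(\beta>1\). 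Finally, the interval \((\frac1{1-s},\frac2{1+s})\) is nonempty exactly when \(s<1/3\), which explains the hypotheses of the lemma.
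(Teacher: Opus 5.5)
Your proposal is correct and follows the paper's argument. Your interpolation $\min\{r^a,|h|r^{a-\beta}\}\le |h|^s r^{a-\beta s}$ is exactly the paper's case split into $|h|\le r^\beta$ (mean value bound) and $|h|>r^\beta$ (triangle inequality), after which both arguments reduce to the support-measure count of Corollary~\ref{lem:estK} and the $r$-integration of Lemma~\ref{lem:estintK}. You also spell out two points the paper leaves implicit: the doubling of the support under the $y$-translation, and the weak-$\L^\theta$ level-set computation.
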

\begin{proof}
  Note that, for $|h|\le r^\beta$, we can use the gradient estimate in Lemma~\ref{lem:kernel-support-sup} to get the bound
  \[
    |\Delta_y^{-h}K_r|\lesssim r^{-\beta- \homd}|h|  \le |h|^s r^{-\homd - \beta s}.
  \]
  For $|h|> r^\beta$, we simply use the triangle inequality and the bound for $K_r$ to obtain
  \[
    |\Delta_y^{-h}K_r|\lesssim r^{- \homd}\le  |h|^s r^{-\homd - \beta s}.
  \]
  With this bound for $\Delta_y^{-h}K_r$ at hand, the first assertion of the lemma follows by the same argument as in the proof of Corollary \ref{lem:estK}. Likewise, we obtain the bounds for
  the integrated kernels.
\end{proof}

\section{Gain of integrability}
\label{sec:gain}

We can now prove the gain of integrability.
\begin{proof}[Proof of Theorem~\ref{thm:embedding}]
  We write $f = f-T_{K_\tau}(f) + T_{K_\tau}(f)$.  We first estimate the difference
  \(f - T_{K_{\tau}} f\) by using the representation \eqref{eq:repfTtauf}, Lemma~\ref{lem:weakYoung} and
  Lemma~\ref{lem:estintK}. Note that for an $r$-dependent kernel $J_r$,
  \[
    \int_0^\tau T_{J_r}(g)\dx r=T_{\int_0^\tau J_r \dx r}(g).
  \]

  Starting with the forcing and assuming $\beta>1$, we find that
  \begin{align*}
    \norm{\int_0^\tau T_{G^v_r} (\nabla_v f) \dx r}_{\L^q} & \lesssim \norm{\nabla_v f}_{\L^p}
  \end{align*}
  with
  \begin{equation*}
    \frac{1}{q}+1 = \frac{1}{\theta_v}+\frac{1}{p}
    \quad\text{and}\quad
    \frac{1}{\theta_v} = 1 + \frac{1-\beta}{\homd},
  \end{equation*}
  so that
  \begin{equation*}
    \frac{1}{q} = \frac{1-\beta}{\homd} + \frac{1}{p}.
  \end{equation*}

  Similarly, we estimate the drift term, assuming in addition that $\beta<2$,
  \[
    \left\|
    \int_0^\tau T_{G^0_r}(S_0)\dx r
    \right\|_{\L^q}
    \lesssim
    \|S_0\|_{\L^\mu}
  \]
  with
  \[
    \frac1q+1=\frac1{\theta_0}+\frac1\mu
  \]
  and
  \[
    \frac1{\theta_0}=1+\frac{\beta-2}{\homd}.
  \]
  Hence
  \[
    \frac1q=\frac1\mu+\frac{\beta-2}{\homd}.
  \]

  We choose \(\beta\) so that the two estimates give the same value of \(q\), i.e.
  \[
    \frac1p+\frac{1-\beta}{\homd}
    =
    \frac1\mu+\frac{\beta-2}{\homd},
    \qquad
    \homd=(2\beta-1)d+1.
  \]
  Writing
  \[
    a:=\frac1p-\frac1\mu,
  \]
  this becomes
  \[
    2\beta-3=a\bigl((2\beta-1)d+1\bigr).
  \]
  Assuming $ad<1$, we therefore obtain
  \begin{equation} \label{betaChoice}
    \beta
    =
    \beta_{p,\mu}
    :=
    \frac{3+(1-d)a}{2(1-da)}.
  \end{equation}
  For this choice,
  \[
    \homd=\homd_{p,\mu}
    :=
    (2\beta_{p,\mu}-1)d+1
    =
    \frac{2d+1}{1-da}.
  \]
  Substituting back gives
  \[
    \frac1q
    =
    \frac1p+\frac{1-\beta_{p,\mu}}{\homd_{p,\mu}}
    =
    \frac1\mu+\frac{\beta_{p,\mu}-2}{\homd_{p,\mu}}
    =
    \frac{1}{4d+2}
    \left(
    \frac{3d+1}{p}
    +
    \frac{d+1}{\mu}
    -1
    \right).
  \]
  The conditions \(\beta_{p,\mu}>1\) and \(\beta_{p,\mu}<2\) are equivalent to
  \begin{equation} \label{abounds}
    -\frac1{d+1}
    <
    \frac1p-\frac1\mu
    <
    \frac1{3d+1},
  \end{equation}
  or equivalently \(q>p\) and \(q>\mu\). Observe that \eqref{abounds} indeed ensures that $ad<1$.

  In addition to the condition $q>\max\{p,\mu\}$, the \(\L^2\)-based
  mollification argument below requires \(q>2\).

  By Lemma \ref{lem:young} and Corollary \ref{lem:estK},
  \[
    \|T_{K_\tau}f\|_{\L^q}
    \lesssim
    \tau^{\homd_{p,\mu}\left(\frac1q-\frac12\right)}
    \|f\|_{\L^2}.
  \]
  Since \(q>2\), the exponent
  \[
    \homd_{p,\mu}\left(\frac1q-\frac12\right)
  \]
  is negative. Hence
  \[
    \|T_{K_\tau}f\|_{\L^q}\to0
    \qquad\text{as }\tau\to\infty.
  \]
  Therefore
  \[
    \|f\|_{\L^q}
    \lesssim
    \|\nabla_v f\|_{\L^p}
    +
    \|S_0\|_{\L^\mu}.
  \]

  We now obtain the multiplicative form by scaling. Observe that the assumptions of Theorem~\ref{thm:embedding} imply that either $S_0 = 0$ and $f = 0$ or $\|S_0\|_{\L^\mu}>0$. For \(\nu>0\), set
  \[
    f_\nu(t,x,v):=f(\nu t,\nu x,v),
    \qquad
    S_{0,\nu}(t,x,v):=\nu S_0(\nu t,\nu x,v).
  \]
  Then
  \[
    (\partial_t+v\cdot\nabla_x)f_\nu
    =
    \nabla_v\cdot S_{0,\nu}.
  \]
  Applying the additive estimate to \(f_\nu\) gives
  \[
    \|f\|_{\L^q}
    \lesssim
    \nu^{(d+1)\left(\frac1q-\frac1p\right)}
    \|\nabla_v f\|_{\L^p}
    +
    \nu^{1+(d+1)\left(\frac1q-\frac1\mu\right)}
    \|S_0\|_{\L^\mu}.
  \]
  Optimising in \(\nu>0\) yields
  \[
    \|f\|_{\L^q}
    \lesssim
    \|\nabla_v f\|_{\L^p}^{\alpha}
    \|S_0\|_{\L^\mu}^{1-\alpha},
  \]
  where
  \[
    \alpha
    =
    \frac{3d+1}{4d+2},
    \qquad
    1-\alpha
    =
    \frac{d+1}{4d+2}.\qedhere
  \]
\end{proof}

We collect two important consequences. The special case $S_0 = A(t,x,v,f,\nabla_v f)$
yields the following result.
\begin{corollary}\label{cor:gain-p}
  Let $p \in (2-\frac{2}{3d+2},2+\frac{2}{d})$. There exists a constant
  $C = C(p,d, \Lambda)>0$ such that for every $f \in \L^2(\R^{1+2d})$ with
  $\nabla_v f \in \L^p(\R^{1+2d})$ which is a weak solution
  to~\eqref{eq:nonlinear-laplace}, with the nonlinearity \(A\) satisfying the \(p\)-growth condition
  \eqref{eq:a-bounds}, it holds that
  \begin{equation*}
    \norm{f}_{\L^{q}_{t,x,v}} \le C \norm{\nabla_v f}_{\L^p_{t,x,v}}^{\frac{(p+2)d+p}{4d+2}}
  \end{equation*}
  where
  \begin{equation*}
    q = p\, \frac{4d+2}{d(p+2)}.
  \end{equation*}
\end{corollary}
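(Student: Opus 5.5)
We apply Theorem~\ref{thm:embedding} with $\mu = p' = \frac{p}{p-1}$ and $S_0 := A(t,x,v,f,\nabla_v f)$. Being a weak solution of \eqref{eq:nonlinear-laplace} means exactly that
\[
  (\partial_t+v\cdot\nabla_x)f=\nabla_v\cdot S_0
\]
in the sense of distributions. So the structural hypothesis of Theorem~\ref{thm:embedding} holds once we check that $S_0\in\L^{p'}$.

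This integrability comes from the growth bound in \eqref{eq:a-bounds}. Pointwise, $|S_0|\le\Lambda|\nabla_v f|^{p-1}$, and $(p-1)p'=p$, so
\[
  \|S_0\|_{\L^{p'}}\le\Lambda\|\nabla_v f\|_{\L^p}^{p-1}.
\]
Measurability of $S_0$ follows from the Carath\'eodory assumption on $A$ in the standard way, as a composition with measurable functions.

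Next we verify the admissibility condition $q>\max\{2,p,p'\}$ for the stated $q$. With $\mu=p'$ we have $\frac1\mu=1-\frac1p$. Substituting into the formula of Theorem~\ref{thm:embedding} gives
\[
  \frac1q=\frac{1}{4d+2}\Bigl(\frac{3d+1}{p}+d+1-\frac{d+1}{p}-1\Bigr)=\frac{1}{4d+2}\Bigl(\frac{2d}{p}+d\Bigr)=\frac{d(p+2)}{p(4d+2)},
\]
which matches the stated $q$.

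By the discussion following Theorem~\ref{thm:embedding}, $q>\max\{p,p'\}$ is equivalent to $2-\frac{2}{3d+2}<p<2+\frac2d$, which is our hypothesis. Since $\max\{p,p'\}\ge2$, the condition $q>2$ is automatic. The computation behind this equivalence is two linear inequalities in $\frac1p$, so I would only recheck it briefly:
\begin{itemize}
  \item $q>p$ means $d(p+2)<4d+2$, i.e.\ $p<2+\frac2d$;
  \item $q>p'$ means $4d+2>d(p+2)(p-1)^{-1}\cdot\ldots$, which reduces to $p>\frac{6d+2}{3d+2}=2-\frac{2}{3d+2}$.
\end{itemize}

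Theorem~\ref{thm:embedding} then yields
\[
  \|f\|_{\L^q}\le C\|\nabla_v f\|_{\L^p}^{\frac{3d+1}{4d+2}}\Lambda^{\frac{d+1}{4d+2}}\|\nabla_v f\|_{\L^p}^{(p-1)\frac{d+1}{4d+2}}.
\]
The total exponent is
\[
  \frac{3d+1+(p-1)(d+1)}{4d+2}=\frac{(p+2)d+p}{4d+2},
\]
as claimed, and the constant depends only on $d,p,\Lambda$.

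There is no real obstacle here. The only point needing care is that Theorem~\ref{thm:embedding} is stated for distributional solutions on the whole space with $f\in\L^2$. This matches the hypotheses of the corollary, provided the notion of weak solution is the distributional one with test functions in $\C_c^\infty(\R^{1+2d})$.
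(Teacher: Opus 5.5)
Your proposal is correct and matches the paper's proof, which simply applies Theorem~\ref{thm:embedding} with $\mu=p'$ and $S_0=A(t,x,v,f,\nabla_v f)$, using $\|S_0\|_{\L^{p'}}\le\Lambda\|\nabla_v f\|_{\L^p}^{p-1}$ and the equivalence of $q>\max\{p,p'\}$ with $2-\frac{2}{3d+2}<p<2+\frac2d$. Your exponent bookkeeping and admissibility check are accurate, including $q>2$ being automatic. The garbled line for $q>p'$ should read $(4d+2)(p-1)>d(p+2)$, which indeed reduces to $p>\frac{6d+2}{3d+2}$.
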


The same proof works for subsolutions, but we need to assume that $f$ is
nonnegative. Moreover, we may include an additional source term.

\begin{theorem}\label{thm:gainsubglobal}
  Let $p \in (2-\frac{2}{3d+2},\,2+\frac{2}{d})$ and set
  \begin{equation*}
    q := p\,\frac{4d+2}{d(p+2)},
    \qquad
    r := p\, \frac{4d+2}{p(3d+2)-2d}.
  \end{equation*}
  Let $T>0$. There exists a constant $C = C(p,d)>0$ such that for every nonnegative weak subsolution $f \in \L^2((-\infty, T) \times \R^{2d})$ to
  \begin{equation*}
    (\partial_t + v \cdot \nabla_x) f \le \nabla_v \cdot S_0 + S_1\quad\text{in}\;\,(-\infty, T) \times \R^{2d}
  \end{equation*}
  with $\nabla_v f \in \L^p((-\infty, T) \times \R^{2d})$, $S_0 \in \L^{\frac{p}{p-1}}((-\infty, T) \times \R^{2d};\R^d)$, and $S_1 \in \L^r((-\infty, T) \times \R^{2d})$, we have
  \begin{align*}
     & \norm{f}_{\L^{q}((-\infty, T) \times \R^{2d})}                                                                                                                                                        \\
     & \le C \left( \norm{\nabla_v f}_{\L^p((-\infty, T) \times \R^{2d};\R^d)} + \norm{S_0}_{\L^{\frac{p}{p-1}}((-\infty, T) \times \R^{2d};\R^d)} + \norm{S_1}_{\L^r((-\infty, T) \times \R^{2d})} \right).
  \end{align*}
\end{theorem}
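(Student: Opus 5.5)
The plan is to rerun the proof of Theorem~\ref{thm:embedding} with $\mu=p'$, replacing the identity \eqref{eq:repfTtauf} by a one-sided inequality that uses the direction of time in Remark~\ref{rem:directionoftime}. The source $S_1$ is handled by the kernel $G^1_r$ of Lemma~\ref{lem:estintK}. We do not need the multiplicative scaling step, since the claim is additive.

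\emph{Step 1: reduction to the whole space and to smooth data.} Since $\supp\psi\subset(-2,-1)\times B_1\times B_1$, every trajectory $\gamma^{\m}(r;(t,x,v))$ with $r>0$ moves strictly backwards in time. Hence for $t<T$ the quantities $T_{K_\tau}f$ and $T_{G_r^\bullet}(\cdot)$ at $(t,x,v)$ only involve values on $(-\infty,t)\times\R^{2d}$. We may therefore extend $f,S_0,S_1$ by zero for times $\ge T$ and only evaluate the estimates on $(-\infty,T)\times\R^{2d}$.

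To obtain a smooth representation, regularise by a left-invariant convolution along the kinetic group \eqref{eq:group-law}, that is $f_\varepsilon=T_{\rho_\varepsilon}f$ with $\rho_\varepsilon\ge0$ an approximate identity supported in small negative times. The equation is left-invariant, $\nabla_v$ commutes with left translations, and $\partial_t+v\cdot\nabla_x$ is a left-invariant field. Testing the weak subsolution inequality with the nonnegative test functions $\rho_\varepsilon((t,x,v)^{-1}\circ\cdot)$ should then give, pointwise,
\begin{equation*}
  (\partial_t+v\cdot\nabla_x)f_\varepsilon\le\nabla_v\cdot S_{0,\varepsilon}+S_{1,\varepsilon},
\end{equation*}
where $S_{0,\varepsilon}=T_{\rho_\varepsilon}S_0$ and $S_{1,\varepsilon}=T_{\rho_\varepsilon}S_1$. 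The regularised functions keep $f_\varepsilon\ge0$ and have norms bounded by those of the original data. A further cut-off or approximation makes the integrations by parts legitimate. I expect this justification to be the main technical obstacle: one has to check that the weak formulation with nonnegative test functions is exactly what left-invariant mollification requires, and that all limits $\varepsilon\to0$ can be taken in the relevant $\L^p$ spaces.

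\emph{Step 2: one-sided representation.} For smooth $f_\varepsilon$, repeat the fundamental theorem of calculus computation before \eqref{eq:repfTtauf}. The transport term appears as $-\int m_0[(\partial_t+v\cdot\nabla_x)f](\gamma^{\m}(r))\,\psi(\m)\dx r\dx\m$. On $\supp\psi$ we have $-m_0>0$ and $\psi\ge0$, so the subsolution inequality may be inserted in this term with the correct sign. After the same change of variables and integration by parts, this yields
\begin{equation*}
  f_\varepsilon\le T_{K_\tau}f_\varepsilon+\int_0^\tau\Bigl(T_{G^0_r}S_{0,\varepsilon}+T_{G^1_r}S_{1,\varepsilon}+T_{G^v_r}\nabla_vf_\varepsilon\Bigr)\dx r.
\end{equation*}
Since $f_\varepsilon\ge0$, it follows that $|f_\varepsilon|\le|T_{K_\tau}f_\varepsilon|+|R_\varepsilon|$, where $R_\varepsilon$ denotes the integral term. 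This is where nonnegativity is used.

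\emph{Step 3: estimates.} Take $\beta=\beta_{p,p'}$ from \eqref{betaChoice} with $a=(2-p)/p$. The admissible range of $p$ gives $\beta\in(1,2)$ and $q>\max\{2,p,p'\}$, exactly as in the proof of Theorem~\ref{thm:embedding}. With this choice, Lemma~\ref{lem:weakYoung} and Lemma~\ref{lem:estintK} bound the $G^v$ term by $\norm{\nabla_vf}_{\L^p}$ and the $G^0$ term by $\norm{S_0}_{\L^{p'}}$.

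For the $G^1$ term, use $\theta_1=\homd/(\homd-1)$. The Young relation then reads $1/r=1/q+1/\homd$. A direct computation with $\homd=(2d+1)p/(p-2d+dp)$ gives
\begin{equation*}
  \frac1r=\frac{p(3d+2)-2d}{p(4d+2)},
\end{equation*}
which is the stated $r$. Moreover $1<r<q$ in the admissible range, so Lemma~\ref{lem:weakYoung} applies.

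Finally, Corollary~\ref{lem:estK} and Lemma~\ref{lem:young} give $\norm{T_{K_\tau}f}_{\L^q}\lesssim\tau^{\homd(1/q-1/2)}\norm{f}_{\L^2}$. Since $q>2$, this tends to zero as $\tau\to\infty$. Letting $\tau\to\infty$ and then $\varepsilon\to0$, using Fatou's lemma on the left-hand side, gives the claimed estimate.
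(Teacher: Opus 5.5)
Your main argument is the paper's own: the one-sided version of \eqref{eq:repfTtauf} coming from $m_0<0$ on $\supp\psi$, nonnegativity to turn $f\le T_{K_\tau}f+R$ into a bound on $|f|$, and the $G^1$ kernel with $\theta_1$ for the $S_1$ term. Your identity $1/r=1/q+1/\homd$ for the stated $r$ is correct. The backward time direction gives the restriction to $(-\infty,T)$. The paper states these points without any regularisation step.

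The regularisation you add in Step~1 has an error and needs correcting. The operator $T_{\rho_\varepsilon}f(z)=\int f(z\circ u)\rho_\varepsilon(u)\,\mathrm du$ averages \emph{right} translations $z\mapsto z\circ u$. Left-invariant fields do not commute with right translations. For $h(z)=f(z\circ u)$ with $u=(s,y,w)$ one computes
\[
(\partial_t+v\cdot\nabla_x)h=[(\partial_t+v\cdot\nabla_x)f](z\circ u)-w\cdot[\nabla_xf](z\circ u),\qquad \nabla_vh=[\nabla_vf](z\circ u)+s\,[\nabla_xf](z\circ u).
\]
So testing with $\rho_\varepsilon(z^{-1}\circ\cdot)$ only gives
\[
T_{\rho_\varepsilon}\bigl((\partial_t+v\cdot\nabla_x)f\bigr)\le T_{\rho_\varepsilon}(\nabla_v\cdot S_0+S_1).
\]
It does not give the inequality you state for $f_\varepsilon$. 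The discrepancy consists of commutator terms involving $\nabla_xf$ and $\nabla_x\cdot S_0$, which are not controlled by the data.

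The fix is to average left translations instead:
\[
f_\varepsilon(z):=\int f(u\circ z)\rho_\varepsilon(u)\,\mathrm du=\int f(u')\rho_\varepsilon(u'\circ z^{-1})\,\mathrm du',
\]
that is, test with $\rho_\varepsilon(\cdot\circ z^{-1})$. This works for the following reasons.
\begin{enumerate}
\item $\partial_t+v\cdot\nabla_x$ and $\nabla_v$ commute with left translations. Hence $(\partial_t+v\cdot\nabla_x)f_\varepsilon\le\nabla_v\cdot S_{0,\varepsilon}+S_{1,\varepsilon}$ holds exactly, with $S_{i,\varepsilon}$ mollified in the same way and $\nabla_vf_\varepsilon$ equal to the mollification of $\nabla_vf$.
\item Left translations preserve Lebesgue measure. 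So all norms are non-increasing and $f_\varepsilon\to f$ in $\L^2$.
\item If $\rho_\varepsilon$ has compact support in $\{s<0\}$, only times below $t$ enter. The restriction to $(-\infty,T)$ is therefore preserved.
\end{enumerate}
With this change the rest of your proof goes through as written.
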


\begin{proof}
  This follows by the same strategy as in the proof of the gain of integrability,
  Theorem~\ref{thm:embedding}. For the new term involving $S_1$, we note that
  $r$ is chosen such that $ \frac1q+1=\frac1{\theta_1}+\frac1r$ so that
  Lemma~\ref{lem:estintK} gives the desired kernel estimate.  The direction of
  our trajectory is compatible with the subsolution property, and thus the
  identity \eqref{eq:repfTtauf} becomes an inequality in the matching direction;
  see also Remark~\ref{rem:directionoftime}.  Moreover, as the action in the
  $t$-variable is always in the negative direction of time, we may restrict the
  estimate to intervals $(-\infty,T)$.
\end{proof}

\section{Transfer of regularity in H\"ormander--Besov norms}
\label{sec:torHB}

\begin{proof}[Proof of Theorem~\ref{thm:transfer-of-regularity}]
  The proof is very similar to the proof of the gain of integrability,
  Theorem~\ref{thm:embedding}, where we additionally use
  Lemma~\ref{lem:diff-kinconv} to put the finite difference on the kernel and
  the estimates from Lemma~\ref{lem:kernel-tor} on the kernels.

  As in the proof of Theorem~\ref{thm:embedding}, we write
  \(f = f - T_{K_{\tau}} f + T_{K_{\tau}} f\).  Using the
  representation~\eqref{eq:repfTtauf}, we estimate the difference. Suppose that $s$ and $\beta$ satisfy the assumptions of the second part of Lemma~\ref{lem:kernel-tor}. For the
  forcing, we find
  \begin{align*}
    \norm{\Delta_x^h\int_0^\tau T_{G^v_r} (\nabla_v f) \dx r}_{\L^q}
    \lesssim |h|^s \norm{\nabla_v f}_{\L^p}
  \end{align*}
  with
  \begin{equation*}
    \frac{1}{q}+1 = \frac{1}{\theta_v(s)}+\frac{1}{p}
    \quad\text{and}\quad
    \frac{1}{\theta_v(s)} = 1 + \frac{1-\beta}{\homd} + \frac{\beta s}{\homd}
  \end{equation*}
  so that
  \begin{equation} \label{ExponentsCond}
    \frac{1}{q} = \frac{1-\beta}{\homd} + \frac{1}{p} + \frac{\beta s}{\homd}.
  \end{equation}

  Similarly, we estimate the drift term
  \begin{equation*}
    \norm{\Delta_x^h\int_0^\tau T_{G^0_r} (S_0)  \dx r}_{\L^q}
    \lesssim |h|^s \norm{S_0}_{\L^{\frac{p}{p-1}}}
  \end{equation*}
  with
  \begin{align*}
    \frac{1}{q}+1 = \frac{1}{\theta_0(s)}+\frac{p-1}{p}
    \quad\text{and}\quad
    \frac{1}{\theta_0(s)} = 1+ \frac{\beta-2}{\homd} + \frac{\beta s}{\homd},
  \end{align*}
  so that
  \begin{equation} \label{exponentcond}
    \frac{1}{q} = \frac{\beta-2}{\homd} + 1 - \frac{1}{p} + \frac{\beta s}{\homd}.
  \end{equation}

  From \eqref{ExponentsCond} and \eqref{exponentcond}, it follows that
  $\beta$ is as in \eqref{betaChoice}, where now $a=\frac{2}{p}-1$, that is, we have
  \begin{equation*}
    \beta = \frac{d(p-2) + 2p + 2}{2\bigl(d(p - 2) + p\bigr)}.
  \end{equation*}
  Inserting this into \eqref{exponentcond}, we obtain
  \begin{equation*}
    s = \frac{1}{q} \frac{p(4d+2)}{[d(p-2)+2p+2]}
    - \frac{d(p+2)}{d(p-2)+2p+2}.
  \end{equation*}

  We now need to verify the assumption on \(s\) and \(\beta\) for
  Lemma~\ref{lem:kernel-tor}.  From the formula for \(s\), we see that \(s\) is
  monotone decreasing as \(q\) increases, and the value \(\bar q\) is
  precisely the value for which \(s\) becomes negative.  Hence the condition
  \(q < \bar q\) ensures that \(s > 0\).

  For the maximal value of \(s\), we first consider the case that \(p \ge 2\) so
  that \(p = \max\{p,p'\}\).  Hence we need to choose \(q > p\) and thus we find for
  \(s\) the upper bound
  \begin{equation*}
    s(p) = \frac{2d+2-dp}{-2d+2+p(2+d)}.
  \end{equation*}
  From this formula, we see directly that \(s(p)\) is decreasing when \(p\)
  is increasing.  Moreover, it attains the maximal value \(1/3\) at \(p=2\).

  In the other case, \(p<2\), we find that \(p'=\max\{p,p'\}\) and thus we
  find the upper bound
  \begin{align*}
    s(p') & = \frac{(p-1)(4d+2)-d(p+2)}{d(p-2)+2p+2}      \\
          & = \frac{1}{3} - \left( \frac{8d+4}{3} \right)
    \left( \frac{2-p}{2-2d+p(2+d)} \right).
  \end{align*}
  In the allowed range of \(p\), we always have \(2-2d+p(2+d) > 0\) so that this
  formula shows that \(s(p') \le 1/3\).

  Hence in both cases, we find that \(s \in (0,1/3)\) as required in
  Lemma~\ref{lem:kernel-tor}.

  For the value of \(\beta\), note that it is as in the proof of
  Theorem~\ref{thm:embedding} and thus \(\beta > 1/(1-s)\).  The upper bound
  \(\beta < 2/(1+s)\) is expanded into
  \begin{equation*}
    2 > \beta(1+s)
    = \frac{\frac{p}{q}(4d+2)-4d+2p+2}{2[d(p-2)+p]}
  \end{equation*}
  and this is equivalent to
  \begin{equation*}
    p \left( 1 - \frac{1}{q} \right) > 1.
  \end{equation*}
  This is ensured by \(q > \max\{p,p'\}\) and thus Lemma~\ref{lem:kernel-tor} can
  be applied.

  The result is the claimed decay
  \[
    \| \Delta_x^h (f - T_{K_{\tau}} f)\|_{\L^{q}} \lesssim |h|^s \left(\norm{\nabla_v f}_{\L^p}+ \norm{S_0}_{\L^{\frac{p}{p-1}}} \right).
  \]

  For the mollified term, we apply Lemma~\ref{lem:diff-kinconv}, Lemma~\ref{lem:young}, and the first part of Lemma~\ref{lem:kernel-tor},
  \[
    \norm{\Delta_x^h T_{K_\tau}f}_{\L^q}
    =
    \norm{T_{\Delta_y^{-h}K_\tau}(f)}_{\L^q}
    \lesssim
    |h|^s \tau^{\homd\left(\frac1q-\frac12\right)-\beta s}\norm{f}_{\L^2}.
  \]
  Because \(q>2\) and \(\beta s>0\), the exponent is negative; hence this contribution vanishes as $\tau\to\infty$.
  Combining this with the estimate for \(f-T_{K_\tau}f\), dividing by \(|h|^s\), taking the supremum over \(h\neq0\), and letting \(\tau\to\infty\), we obtain
  \[
    \norm{f}_{\Bdot^{s}_{q\infty,x}\L^q_{t,v}}
    \lesssim
    \norm{\nabla_v f}_{\L^p}
    +
    \norm{S_0}_{\L^{\frac{p}{p-1}}}.
  \]
  The multiplicative form then follows by the same scaling argument as above.
\end{proof}

\section{Intrinsic scaling and energy estimate}
\label{sec:energy}

\subsection{\texorpdfstring{Kinetic \(p\)-cylinders}{Kinetic p-cylinders}}

Recall the group law~\eqref{eq:group-law}. The kinetic $p$-Laplace equation \eqref{eq:kinetic-plaplace} is left-invariant under the group, i.e.\ if
$f$ solves \eqref{eq:kinetic-plaplace} in an open set $\Omega \subset \R^{1+2d}$, then
\[
  g(t,x,v) := f\bigl((t_0,x_0,v_0)\circ(t,x,v)\bigr)
\]
solves \eqref{eq:kinetic-plaplace} in the translated domain $z_0^{-1}\circ\Omega$.  This
can be verified by direct computation. For the more general equation~\eqref{eq:nonlinear-laplace}, this invariance property no longer holds. Nevertheless, the transformed function $g$ solves a structurally equivalent equation
\begin{equation*}
  (\partial_t + v \cdot \nabla_x)g
  - \nabla_{v} \cdot \tilde{A}(t,x,v,g,\nabla_v g) = 0
\end{equation*}
on $z_0^{-1}\circ\Omega$. By structurally equivalent, we mean that $\tilde{A}$ satisfies the same structural assumptions \eqref{eq:a-bounds} as $A$, with the same constants $\lambda$ and $\Lambda$. By this observation, it suffices to
state and obtain local estimates for cylinders centred at the origin.

For the \(p\)-scaling of the nonlinearity~\eqref{eq:a-bounds}, the natural family of dilations, parametrized by $r>0$, is given by
\[
  \delta_r(t,x,v) := (r^p t,\; r^{1+p}x,\; r v).
\]
If $f$ solves \eqref{eq:kinetic-plaplace} in $\Omega$, then the rescaled function $f_r := f\circ \delta_r$ solves the
same equation in $\delta_r^{-1}\Omega$. In the case of the more general equation~\eqref{eq:nonlinear-laplace}, $f_r$ solves the structurally equivalent equation with nonlinearity
$\tilde{A}(t,x,v,\eta,\xi):=r^{p-1} A(r^p t,r^{1+p} x,r v,\eta, \xi/r)$.
This fixes the parabolic time scale $t\sim r^p$,
the velocity scale $v\sim r$, and the transport scale in $x$ given by $x\sim r^{1+p}$.

\medskip\noindent For suitable localisation, we impose a cutoff following the
transport flow \(\partial_t + v \cdot \nabla_x\) by using the variable \(x-tv\).  Following
\cite{dietert-hirsch-2022-preprint-regularity}, this motivates the following
definition of cylinders, where the lateral boundary is aligned so that the
transport term \(\partial_t + v \cdot \nabla_x\) commutes with the localisation in \((x,v)\).

\begin{definition}[Backward kinetic $p$-cylinder]\label{def:kinetic-cylinder}
  For \(p \in (1,\infty)\), given $\theta>0$ and $R>0$, we define the \emph{backward kinetic
    $p$-cylinder} centred at $(0,0,0)$ by
  \begin{equation*}
    Q_{\theta,R}=Q_{\theta,R}^p
    := \left\{
    (t,x,v) \in \R^{1+2d} :
    t \in [-\theta R^p,0),
    \ x-tv \in B_{\theta R^{1+p}}(0),
    \ v \in B_R(0)
    \right\},
  \end{equation*}
  where $B_\rho(0)\subset\R^d$ denotes the Euclidean ball of radius $\rho$.
\end{definition}

Using the kinetic group law, the cylinder centred at $z_0=(t_0,x_0,v_0)$ is
\begin{align*}
   & Q_{\theta,R}(z_0)
  := \bigl\{ z\in\R^{1+2d} : z_0^{-1}\circ z \in Q_{\theta,R}\bigr\} \\
   & \;\;\; = \Bigl\{(t,x,v):\ t\in[t_0-\theta R^p,t_0),\;
  x-x_0-(t-t_0)v \in B_{\theta R^{1+p}}(0),\;
  v-v_0\in B_R(0)\Bigr\}.
\end{align*}

Note that $Q_{\theta,R}=\delta_R(Q_{\theta,1})$ and that the parameter $\theta$ plays the
role of an \emph{intrinsic} factor, allowing the time and transport scales to
adapt to the local size of the solution.

\subsection{Weak solutions}
We follow the standard energy notion of weak solutions used for Kolmogorov-type
kinetic equations \cite{dietert2025criticaltrajectorieskineticgeometry}, adapted
here to the nonlinear $p$-Laplace diffusion in $v$.

Let $I\subset\R$ be an open interval, and let $\Omega_x,\Omega_v\subset\R^d$ be open sets.
We write $\Omega_I := I\times\Omega_x\times\Omega_v$.

\begin{definition}[Weak (sub-)solutions]\label{def:weak-solution}
  Let $p\in(1,\infty)$. A measurable function
  \[
    f \in \L^\infty\bigl(I;\L^2(\Omega_x\times\Omega_v)\bigr)
    \cap \L^p(\Omega_I)
    \cap \L^p\bigl(I\times\Omega_x;\H^{1,p}(\Omega_v)\bigr)
  \]
  is called a \emph{weak (sub-)solution} to \eqref{eq:nonlinear-laplace} on
  $\Omega_I$ if for every test function $\varphi\in \C_c^\infty(\Omega_I)$ with $\varphi\ge 0$ one has
  \begin{equation*}
    \int_{\Omega_I}
    \Bigl(
    - f\,(\partial_t+v\cdot\nabla_x)\varphi
    + A(t,x,v,f,\nabla_vf)\cdot\nabla_v\varphi
    \Bigr) \dx (t,x,v)
    =(\le) 0.
  \end{equation*}
  In particular, $f$ is a weak solution if the identity holds for all
  $\varphi\in \C_c^\infty(\Omega_I)$.
\end{definition}

\begin{remark}\label{rem:admissible-tests}
  In the energy estimate below, we test the equation with functions of the form
  $f\,\chi^p\,\eta$, which are generally not sufficiently smooth. As in degenerate parabolic theory, this can
  be justified by a regularisation argument and a density/approximation
  procedure. Since our focus is on the algebraic structure of the estimate, we
  present the computations formally; they can be made rigorous within the
  framework of Definition~\ref{def:weak-solution}. We refer the interested
  reader to \cite[Appendix~A]{dietert2025criticaltrajectorieskineticgeometry}
  for details.
\end{remark}

\subsection{A localised gain of integrability for subsolutions}

\begin{lemma}\label{lem:gainsubloc}
  Let
  \[
    p\in\Bigl(2-\frac{2}{3d+2},\,2+\frac{2}{d}\Bigr), \qquad
    q := p\,\frac{4d+2}{d(p+2)}, \qquad
    r := p\,\frac{4d+2}{p(3d+2)-2d}.
  \]
  Fix $\theta>0$, $0<R_1<R_2$ and $(t_0,x_0,v_0) \in \R^{1+2d}$.  Assume \eqref{eq:a-bounds}, and let $f\ge 0$ be a weak
  subsolution in $Q_{\theta,R_2} := Q_{\theta,R_2}(t_0,x_0,v_0)$ of the equation \eqref{eq:nonlinear-laplace} with $\nabla_v f\in \L^p(Q_{\theta,R_2})$
  and $f\in \L^p(Q_{\theta,R_2})\cap \L^2(Q_{\theta,R_2})$.  Define
  \[
    \Gamma_t:=\frac{1}{\theta\,(R_2^{p}-R_1^{p})},
    \qquad
    \Gamma_v:=\frac{R_2^{p}}{R_2^{1+p}-R_1^{1+p}}+\frac{1}{R_2-R_1}.
  \]
  Then there exists $C=C(p,d,\lambda,\Lambda)>0$ such that
  \begin{multline*}
    \|f\|_{\L^q(Q_{\theta,R_1})}
    \le C\Bigl(
    \|\nabla_v f\|_{\L^p(Q_{\theta,R_2})} +\bigl(1+\Gamma_v\,|Q_{\theta,R_2}|^{\frac1r-\frac{p-1}{p}}\bigr)
    \|\nabla_v f\|_{\L^p(Q_{\theta,R_2})}^{p-1}\\
    +\Gamma_v\,\|f\|_{\L^p(Q_{\theta,R_2})}
    +\Gamma_t\,|Q_{\theta,R_2}|^{\frac1r-\frac1p}\,\|f\|_{\L^p(Q_{\theta,R_2})}
    \Bigr)
  \end{multline*}
  and for \(p \ge 2\) also
  \begin{multline*}
    \|f\|_{\L^q(Q_{\theta,R_1})}
    \le C\Bigl(
    \|\nabla_v f\|_{\L^p(Q_{\theta,R_2})} +\bigl(1+\Gamma_v\,|Q_{\theta,R_2}|^{\frac1r-\frac{p-1}{p}}\bigr)
    \|\nabla_v f\|_{\L^p(Q_{\theta,R_2})}^{p-1}\\
    +\Gamma_v\,\|f\|_{\L^p(Q_{\theta,R_2})}
    +\Gamma_t\,|Q_{\theta,R_2}|^{\frac1r-\frac12}\,\|f\|_{\L^2(Q_{\theta,R_2})}
    \Bigr).
  \end{multline*}
\end{lemma}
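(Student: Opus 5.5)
By translation invariance (the kinetic group structure discussed above) we may assume $(t_0,x_0,v_0)=0$. The plan is to localise with a cutoff and apply the global estimate of Theorem~\ref{thm:gainsubglobal} to the localised function.

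\textbf{The cutoff.} Choose $\chi(t,x,v)=\eta(t)\,\phi(x-tv)\,\zeta(v)$ with the following properties:
\begin{itemize}
  \item $\eta\equiv1$ on $[-\theta R_1^p,0)$, $\eta$ vanishes for $t\le-\theta R_2^p$, and $|\eta'|\lesssim\Gamma_t$;
  \item $\phi\equiv1$ on $B_{\theta R_1^{1+p}}$, $\phi$ is supported in $B_{\theta R_2^{1+p}}$, and $|\nabla\phi|\lesssim 1/(\theta(R_2^{1+p}-R_1^{1+p}))$;
  \item $\zeta\equiv1$ on $B_{R_1}$, $\zeta$ is supported in $B_{R_2}$, and $|\nabla\zeta|\lesssim (R_2-R_1)^{-1}$.
\end{itemize}
Then $\chi\equiv1$ on $Q_{\theta,R_1}$. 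Because $\phi$ depends on $x-tv$, the transport derivative does not see $\phi$: $(\partial_t+v\cdot\nabla_x)\chi=\eta'\phi\zeta$.

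On the support, $|t|\le\theta R_2^p$, so
\[
|\nabla_v\chi|\lesssim |t|\,|\nabla\phi|+|\nabla\zeta|\lesssim \frac{R_2^p}{R_2^{1+p}-R_1^{1+p}}+\frac1{R_2-R_1}=\Gamma_v .
\]
This is exactly why $\Gamma_v$ has its stated form.

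\textbf{The localised inequality.} Set $g=\chi f$ and extend it by zero to $(-\infty,0)\times\R^{2d}$; this is consistent because $\eta$ vanishes near the bottom of the cylinder and the lateral cutoffs vanish near the lateral boundary. Testing the subsolution inequality with $\chi\varphi$, $\varphi\ge0$, and using $f\ge0$, $\chi\ge0$, we get in the weak sense
\[
(\partial_t+v\cdot\nabla_x)g\le \nabla_v\cdot(\chi S_0)-S_0\cdot\nabla_v\chi+f\,\eta'\phi\zeta,\qquad S_0:=-A(t,x,v,f,\nabla_vf)
\]
(sign conventions matched to \eqref{eq:nonlinear-laplace}). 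The drift term is therefore $\tilde S_0=\chi S_0$, and the source is $\tilde S_1=-S_0\cdot\nabla_v\chi+f\eta'\phi\zeta$. The argument is a formal computation, justified as in Remark~\ref{rem:admissible-tests}.

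\textbf{Applying Theorem~\ref{thm:gainsubglobal}.} Since $g\ge0$, the theorem applies with $T=0$ and gives $\|f\|_{\L^q(Q_{\theta,R_1})}\le\|g\|_{\L^q}$, bounded by the three norms below.
\begin{itemize}
  \item $\nabla_v g=\chi\nabla_vf+f\nabla_v\chi$, so
  \[
  \|\nabla_vg\|_{\L^p}\le\|\nabla_vf\|_{\L^p(Q_{\theta,R_2})}+C\,\Gamma_v\|f\|_{\L^p(Q_{\theta,R_2})}.
  \]
  \item By \eqref{eq:a-bounds}, $|S_0|\le\Lambda|\nabla_vf|^{p-1}$, hence
  \[
  \|\chi S_0\|_{\L^{p'}}\le\Lambda\|\nabla_vf\|_{\L^p}^{p-1}.
  \]
  \item For $\tilde S_1$ we need $r\le p'$ and $r\le p$ (resp.\ $r\le 2$) so that Hölder on the finite-measure set $Q_{\theta,R_2}$ applies. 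This gives
  \[
  \|S_0\cdot\nabla_v\chi\|_{\L^r}\lesssim\Gamma_v|Q_{\theta,R_2}|^{\frac1r-\frac{p-1}{p}}\|\nabla_vf\|_{\L^p}^{p-1}
  \]
  and
  \[
  \|f\eta'\phi\zeta\|_{\L^r}\lesssim\Gamma_t|Q_{\theta,R_2}|^{\frac1r-\frac1p}\|f\|_{\L^p},
  \]
  or $\Gamma_t|Q_{\theta,R_2}|^{\frac1r-\frac12}\|f\|_{\L^2}$ in the second version.
\end{itemize}
Summing these gives both claimed estimates; the ``$1+$'' in front of $\|\nabla_vf\|^{p-1}$ collects the $\tilde S_0$ contribution.

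\textbf{Main obstacle.} The expected difficulty is verifying the exponent inequalities on the admissible range of $p$. The conditions are $r\le p/(p-1)$ and $r\le p$, which ensure $\frac1r-\frac{p-1}p\ge0$ and $\frac1r-\frac1p\ge0$, and additionally $r\le2$ when $p\ge2$.

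From $\frac1r=\frac{p(3d+2)-2d}{p(4d+2)}$:
\begin{itemize}
  \item $\frac1r\ge\frac1p$ is equivalent to $p(3d+2)\ge 6d+2$. This fails for $p$ slightly below $2$, which is presumably why the first estimate is used as written only where it makes sense.
  \item $\frac1r\ge\frac12$ is equivalent to $p\ge2$, which explains the restriction in the second estimate.
  \item $\frac1r\ge1-\frac1p$ is equivalent to $p(3d+2)-2d\ge(p-1)(4d+2)$, that is, $p\le 2+\frac2d$, which holds on the whole range.
\end{itemize}
I would check these carefully. Where one of these exponents is negative, the displayed bound is still formally fine provided one interprets the norms on $Q_{\theta,R_2}$ through Hölder in the reverse direction. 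If that fails, the fallback is to split $\tilde S_1$ differently, absorbing $f\eta'$ into a divergence-free rearrangement; since the statement is given as is, I expect the Hölder bookkeeping to close it.
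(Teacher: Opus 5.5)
Your localisation matches the paper's proof: the cutoff $\eta(t)\phi(x-tv)\zeta(v)$, the identity $(\partial_t+v\cdot\nabla_x)\chi=\eta'\phi\zeta$, the bound $|\nabla_v\chi|\lesssim\Gamma_v$, drift $\chi A$ and source $f\eta'\phi\zeta-A\cdot\nabla_v\chi$ for $g=\chi f$, then Theorem~\ref{thm:gainsubglobal} with $T=0$ and H\"older on $Q_{\theta,R_2}$. Your $S_0$ should be $+A$, not $-A$, since the equation reads $(\partial_t+v\cdot\nabla_x)f=\nabla_v\cdot A$, but this does not affect any norm.

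The gap is in the exponent check you flag as the main obstacle. As written, you conclude that $r\le p$ fails for $p$ slightly below $2$, which leaves the first estimate unproved there. Your proposed rescue does not work. If $r>p$, then $\|f\|_{\L^r(Q_{\theta,R_2})}$ is not controlled by $\|f\|_{\L^p(Q_{\theta,R_2})}$ on a finite-measure set; there is no ``reverse H\"older'', and the stated bound would simply fail. Your second claim, $\frac1r\ge\frac12\iff p\ge 2$, is also wrong: in fact $\frac1r\ge\frac12\iff p(2d+2)\ge 4d\iff p\ge\frac{2d}{d+1}$.

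The fix is arithmetic. Your equivalence $\frac1r\ge\frac1p\iff p(3d+2)\ge 6d+2$ is correct. But $\frac{6d+2}{3d+2}=2-\frac{2}{3d+2}$ is exactly the excluded lower endpoint of the range, so $r<p$ holds throughout.

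There is a cleaner way to see all three conditions at once. By construction in Theorem~\ref{thm:gainsubglobal}, $\frac1r=\frac1q+\frac1{\homd}$, with $\beta=\beta_{p,p'}$ and $\homd=\homd_{p,p'}$ from \eqref{betaChoice}. Combining this with $\frac1q=\frac1p+\frac{1-\beta}{\homd}=\frac1{p'}+\frac{\beta-2}{\homd}$ from the proof of Theorem~\ref{thm:embedding} gives
\[
\frac1r=\frac1p+\frac{2-\beta}{\homd}=\frac1{p'}+\frac{\beta-1}{\homd}.
\]
Hence $r<\min\{p,p'\}\le 2$ is equivalent to $1<\beta<2$, that is, to $q>\max\{p,p'\}$. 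This is precisely the condition defining the admissible range of $p$.

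So all the H\"older exponents $\frac1r-\frac1p$, $\frac1r-\frac{p-1}{p}$ and $\frac1r-\frac12$ are nonnegative on the whole range, and your argument then closes exactly as in the paper. In particular, the restriction $p\ge2$ in the second estimate is not forced by H\"older; the paper states it only in the case it uses later.
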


\begin{proof}
  Recall the notion $B_\rho:=\{y\in\R^d:\ |y|<\rho\}$ from
  Definition~\ref{def:kinetic-cylinder}. Since kinetic translations transform \eqref{eq:nonlinear-laplace} into a structurally equivalent equation, we may assume $(t_0,x_0,v_0)=0$.

  Choose smooth cutoffs
  \[
    \eta\in \C^\infty((-\theta R_2^p,\infty)),\qquad
    \zeta\in \C_c^\infty(\R^d),\qquad
    \varphi\in \C_c^\infty(\R^d)
  \]
  taking values in $[0,1]$ such that
  \[
    \eta\equiv 1 \text{ on }[-\theta R_1^p,\infty),\quad
    \zeta\equiv 1 \text{ on }B_{\theta R_1^{1+p}},\quad
    \varphi\equiv 1 \text{ on }B_{R_1},
  \]
  and
  \[
    \supp\eta\subset(-\theta R_2^p,\infty),\quad
    \supp\zeta\subset B_{\theta R_2^{1+p}},\quad
    \supp\varphi\subset B_{R_2}.
  \]
  Define the characteristic-adapted cutoff
  \[
    \chi(t,x,v):=\eta(t)\,\zeta(x-tv)\,\varphi(v).
  \]
  Then $\chi\equiv 1$ on $Q_{\theta,R_1}$ and $\supp\chi \cap ((-\infty,0) \times \R^{2d})\subset Q_{\theta,R_2}$. Moreover, $(\partial_t+v\cdot\nabla_x)\zeta(x-tv)=0$ implies
  \[
    (\partial_t+v\cdot\nabla_x)\chi=\eta'(t)\,\zeta(x-tv)\,\varphi(v).
  \]
  With standard choices of one-dimensional cutoffs, we can ensure
  \[
    \|\eta'\|_{\L^\infty}\le \frac{C}{\theta\,(R_2^p-R_1^p)},\qquad
    \|\nabla\zeta\|_{\L^\infty}\le \frac{C}{\theta\,(R_2^{1+p}-R_1^{1+p})},\qquad
    \|\nabla\varphi\|_{\L^\infty}\le \frac{C}{R_2-R_1},
  \]
  where $C=C(d)$. Hence
  \[
    \|(\partial_t+v\cdot\nabla_x)\chi\|_{\L^\infty}\le C\,\Gamma_t.
  \]
  Also,
  \[
    \nabla_v\chi
    =\eta(t)\Bigl(-t\,[\nabla\zeta](x-tv)\,\varphi(v)+\zeta(x-tv)\,[\nabla\varphi](v)\Bigr),
  \]
  and since $|t|\le \theta R_2^p$ on $\supp\chi$ we obtain
  \[
    \|\nabla_v\chi\|_{\L^\infty}
    \le C\left(\frac{R_2^p}{R_2^{1+p}-R_1^{1+p}}+\frac{1}{R_2-R_1}\right)
    = C\,\Gamma_v,
  \]
  so the $\theta$-dependence cancels in $\Gamma_v$.

  \medskip

  Set $g:=f\chi$. Since $\chi\equiv 1$ on $Q_{\theta,R_1}$,
  \[
    \|f\|_{\L^q(Q_{\theta,R_1})}\le \|g\|_{\L^q((-\infty,0) \times \R^{2d})}.
  \]
  In the sense of weak subsolutions, we have
  \begin{align*}
    (\partial_t+v\cdot\nabla_x)g
     & =(\partial_t+v\cdot\nabla_x)(f\chi)
    \le \chi\,\nabla_v\cdot A + f\,(\partial_t+v\cdot\nabla_x)\chi                                 \\
     & =\nabla_v\cdot(\chi A)+\Bigl[f\,(\partial_t+v\cdot\nabla_x)\chi - A\cdot\nabla_v\chi\Bigr].
  \end{align*}
  Thus $g$ satisfies
  \[
    (\partial_t+v\cdot\nabla_x)g \le \nabla_v\cdot S_0 + S_1,
    \qquad
    S_0:=\chi A,\quad S_1:=f\,(\partial_t+v\cdot\nabla_x)\chi - A\cdot\nabla_v\chi.
  \]

  We apply the global estimate of Theorem~\ref{thm:gainsubglobal} to $g$ with
  the exponents $q,r$ and $T = 0$ to obtain
  \[
    \|g\|_{\L^q((-\infty, 0) \times \R^{2d})}
    \le C\Bigl(\|\nabla_v g\|_{\L^p((-\infty, 0) \times \R^{2d})}+\|S_0\|_{\L^{\frac{p}{p-1}}((-\infty, 0) \times \R^{2d})}+\|S_1\|_{\L^r((-\infty, 0) \times \R^{2d})}\Bigr),
  \]
  for a constant $C=C(p,d)$.  Since $\supp\chi\subset Q_{\theta,R_2}$,
  $\nabla_v g=\chi\,\nabla_v f + f\,\nabla_v\chi$ implies
  \begin{align*}
    \|\nabla_v g\|_{\L^p((-\infty, 0) \times \R^{2d})}
     & \le \|\nabla_v f\|_{\L^p(Q_{\theta,R_2})}+\|\nabla_v\chi\|_{\L^\infty}\,\|f\|_{\L^p(Q_{\theta,R_2})} \\
     & \le \|\nabla_v f\|_{\L^p(Q_{\theta,R_2})}+C\,\Gamma_v\,\|f\|_{\L^p(Q_{\theta,R_2})}.
  \end{align*}
  Moreover, $|S_0|=\chi|A|\le \Lambda \,|\nabla_v f|^{p-1}$ gives
  \[
    \|S_0\|_{\L^{\frac{p}{p-1}}(Q_{\theta,R_2})}
    \le \Lambda\|\nabla_v f\|_{\L^p(Q_{\theta,R_2})}^{p-1}.
  \]

  Write $S_1=S_{1,t}+S_{1,v}$ with $S_{1,t}:=f\,(\partial_t+v\cdot\nabla_x)\chi$ and $S_{1,v}:=-A\cdot\nabla_v\chi$.
  In the present range of $p$, we always have \(r \le p\).  Hence
  H\"older's inequality on the finite-measure set $Q_{\theta,R_2}$ yields
  \[
    \|f\|_{\L^r(Q_{\theta,R_2})}\le |Q_{\theta,R_2}|^{\frac1r-\frac1p}\,\|f\|_{\L^p(Q_{\theta,R_2})}.
  \]
  For \(p \ge 2\), we also have
  \[
    \|f\|_{\L^r(Q_{\theta,R_2})}\le |Q_{\theta,R_2}|^{\frac1r-\frac12}\,\|f\|_{\L^2(Q_{\theta,R_2})}.
  \]
  For the \(A\)-term, we find
  \[
    \|A\|_{\L^r(Q_{\theta,R_2})}
    \le |Q_{\theta,R_2}|^{\frac1r-\frac{p-1}{p}}\,
    \|A\|_{\L^{\frac{p}{p-1}}(Q_{\theta,R_2})}
    \le \Lambda|Q_{\theta,R_2}|^{\frac1r-\frac{p-1}{p}}\,
    \|\nabla_v f\|_{\L^p(Q_{\theta,R_2})}^{p-1}.
  \]
  Therefore,
  \[
    \|S_{1,t}\|_{\L^r(Q_{\theta,R_2})}
    \le \|(\partial_t+v\cdot\nabla_x)\chi\|_{\L^\infty_{t,x,v}}\,\|f\|_{\L^r(Q_{\theta,R_2})}
    \le C\,\Gamma_t\,|Q_{\theta,R_2}|^{\frac1r-\frac12}\,\|f\|_{\L^2(Q_{\theta,R_2})},
  \]
  for $p \ge 2$,
  \[
    \|S_{1,t}\|_{\L^r(Q_{\theta,R_2})}
    \le  C
    \Gamma_t\,|Q_{\theta,R_2}|^{\frac1r-\frac1p}\,
    \|f\|_{\L^p(Q_{\theta,R_2})},
  \]
  for all $p$ in the given range, and
  \[
    \|S_{1,v}\|_{\L^r(Q_{\theta,R_2})}
    \le \|\nabla_v\chi\|_{\L^\infty_{t,x,v}}\,\|A\|_{\L^r(Q_{\theta,R_2})}
    \le C\,\Gamma_v\,|Q_{\theta,R_2}|^{\frac1r-\frac{p-1}{p}}\,
    \|\nabla_v f\|_{\L^p(Q_{\theta,R_2})}^{p-1}.
  \]
  Combining these bounds gives the stated estimate.
\end{proof}

\subsection{Energy estimate}
We now prove the basic Caccioppoli-type estimate on nested kinetic $p$-cylinders.

\medskip
For $\theta,R>0$ and $t\in[-\theta R^p,0)$, we denote the time-slice of $Q_{\theta,R}$ at time $t$ by
\[
  D_{\theta,R}(t)
  := \bigl\{(x,v)\in\R^{2d}:\ x-tv\in B_{\theta R^{1+p}}(0),\ v\in B_R(0)\bigr\}.
\]
For a measurable function $g$ on $Q_{\theta,R}$ we write
\[
  \|g\|_{\L^\infty_t\L^2_{x,v}(Q_{\theta,R})}
  := \esssup_{t\in(-\theta R^p,0)}
  \left(\int_{D_{\theta,R}(t)} |g(t,x,v)|^2 \dx (x,v) \right)^{1/2}.
\]

\begin{lemma}[Energy estimate]\label{lem:energy}
  Let $p\in(1,\infty)$, $\theta>0$, and $0<R_1<R_2$. Assume \eqref{eq:a-bounds}, and let $f$ be a nonnegative weak
  subsolution of \eqref{eq:nonlinear-laplace} in an open
  set containing $\overline{Q_{\theta,R_2}}$.  Then there exists $C=C(d,p,\lambda,\Lambda)>0$ such
  that
  \begin{equation*}
    \|f\|_{\L^\infty_t \L^2_{x,v}(Q_{\theta,R_1})}^2
    + \|\nabla_v f\|_{\L^p(Q_{\theta,R_1})}^p
    \le
    \frac{C}{\theta\,(R_2-R_1)^p}\,\|f\|_{\L^2(Q_{\theta,R_2})}^2
    + \frac{C}{(R_2-R_1)^p}\,\|f\|_{\L^p(Q_{\theta,R_2})}^p.
  \end{equation*}
\end{lemma}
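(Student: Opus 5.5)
We follow the standard Caccioppoli argument and test the subsolution inequality with $f\chi^p$ (formally, justified as in Remark~\ref{rem:admissible-tests}). Here $\chi$ is the characteristic-adapted cutoff from the proof of Lemma~\ref{lem:gainsubloc}, but with time truncation from above: $\chi(t,x,v)=\eta(t)\zeta(x-tv)\varphi(v)\,\indicator_{t<t_1}$ for a fixed $t_1\in(-\theta R_1^p,0)$. The cutoffs are $\eta,\zeta,\varphi$ as before, and we take $\zeta\equiv1$ on the enlarged ball appearing below. Since $f\ge0$ and $\chi\ge 0$, the test function is admissible for subsolutions. Because $(\partial_t+v\cdot\nabla_x)\zeta(x-tv)=0$, the transport part gives
\[
\int f(\partial_t+v\cdot\nabla_x)(f\chi^p)\,\,\widehat{=}\,\,\tfrac12\int_{D}f^2\chi^p(t_1)\dx(x,v)-\tfrac12\int f^2\,p\chi^{p-1}\eta'\zeta\varphi,
\]
with $|\eta'|\lesssim 1/(\theta(R_2^p-R_1^p))\le 1/(\theta(R_2-R_1)^p)\cdot C$, using $R_2^p-R_1^p\ge (R_2-R_1)^p$ for $p\ge1$. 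This produces the term $\frac{C}{\theta(R_2-R_1)^p}\|f\|_{\L^2}^2$.

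For the diffusion part, $\nabla_v(f\chi^p)=\chi^p\nabla_vf+p f\chi^{p-1}\nabla_v\chi$, so \eqref{eq:a-bounds} gives
\[
\int A\cdot\nabla_v(f\chi^p)\ \ge\ \lambda\int\chi^p|\nabla_vf|^p-p\Lambda\int |\nabla_vf|^{p-1}\chi^{p-1}f|\nabla_v\chi| .
\]
Young's inequality with exponents $p'$ and $p$ absorbs the last term into half of the first, leaving $C\int f^p|\nabla_v\chi|^p$. Taking the supremum over $t_1$ and adding the two resulting inequalities yields the claim once $\|\nabla_v\chi\|_{\L^\infty}\lesssim (R_2-R_1)^{-1}$.

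The main obstacle is exactly this last bound. By the computation in Lemma~\ref{lem:gainsubloc}, $\nabla_v\chi$ contains $-t\nabla\zeta(x-tv)$, which is of size $\theta R_2^p/(\theta(R_2^{1+p}-R_1^{1+p}))$. This is comparable to $1/(R_2-R_1)$ only if $R_2^{1+p}-R_1^{1+p}\gtrsim R_2^p(R_2-R_1)$, which holds since $R_2^{1+p}-R_1^{1+p}\ge R_2^p(R_2-R_1)$ (write $R_2^{1+p}-R_1^{1+p}=R_2^p(R_2-R_1)+R_1(R_2^p-R_1^p)$). Hence $\Gamma_v\le 2/(R_2-R_1)$ and the bound follows, with the $\theta$-dependence cancelling as noted there. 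One must also check that $\supp\chi\cap\{t<0\}\subset Q_{\theta,R_2}$ and $\chi=1$ on $Q_{\theta,R_1}$, so that the integrals restrict to the cylinders. The truncated time slice integral at $t_1$ controls $\int_{D_{\theta,R_1}(t_1)}f^2$, giving the $\L^\infty_t\L^2_{x,v}$ term. The rigorous justification (Steklov averaging in the transport direction) is delegated to Remark~\ref{rem:admissible-tests}.
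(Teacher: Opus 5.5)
Your proposal is correct and follows essentially the same route as the paper. It tests with $f$ times a transport-adapted cutoff, uses $(\partial_t+v\cdot\nabla_x)\zeta(x-tv)=0$, absorbs the mixed diffusion term by Young's inequality, and bounds $|\nabla_v\chi|\lesssim (R_2-R_1)^{-1}$ via $R_2^{1+p}-R_1^{1+p}\ge R_2^p(R_2-R_1)$ and $|\eta'|\lesssim(\theta(R_2-R_1)^p)^{-1}$ via $R_2^p-R_1^p\ge(R_2-R_1)^p$. The differences are cosmetic: you raise the time cutoff to the power $p$ as well and merge the paper's Steps~5 and~6 into a single truncation at $t_1$. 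There is one sign slip in your displayed transport identity, whose left-hand side should read $\int(\partial_t+v\cdot\nabla_x)f\,f\chi^p$ (equivalently $-\int f(\partial_t+v\cdot\nabla_x)(f\chi^p)$).
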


\begin{remark}
  The $\L^\infty_t\L^2_{x,v}$ bound is not needed for our approach to local boundedness. We decided to include it to motivate our definition of energy weak solutions and because it may be useful for future purposes.
\end{remark}

\begin{proof}[Proof of Lemma \ref{lem:energy}]
  Fix $0<R_1<R_2$ and write $Q_{R_i}:=Q_{\theta,R_i}$ for \(i=1,2\).

  \textbf{Step 1. Cutoffs adapted to the transport.}
  Choose $\chi_x\in \C_c^\infty(\R^d)$ and $\chi_v\in \C_c^\infty(\R^d)$ such that
  \[
    \begin{aligned}
       & 0\le \chi_x,\chi_v\le 1,                              \\
       & \chi_x\equiv 1\ \text{on }B_{\theta R_1^{1+p}},\qquad
      \supp\chi_x\subset B_{\theta R_2^{1+p}},                 \\
       & \chi_v\equiv 1\ \text{on }B_{R_1},\qquad
      \supp\chi_v\subset B_{R_2},
    \end{aligned}
  \]
  and
  \begin{equation}
    \label{eq:cutoff-derivatives}
    |\nabla \chi_x|\le \frac{C}{\theta\,(R_2^{1+p}-R_1^{1+p})},
    \qquad
    |\nabla \chi_v|\le \frac{C}{R_2-R_1}.
  \end{equation}
  Define $\chi:\R^{2d}\to[0,1]$ by $\chi(y,v):=\chi_x(y)\chi_v(v)$, and set the kinetic cutoff
  \[
    \chi_{R_2}(t,x,v) := \chi(x-tv,v), \qquad (t,x,v)\in\R^{1+2d}.
  \]
  By construction,
  \[
    (\partial_t+v\cdot\nabla_x)\chi_{R_2}(t,x,v)=0 \quad\text{for all }(t,x,v),
  \]
  and for the velocity derivative, we have
  \begin{equation*}
    \nabla_v \chi_{R_2}(t,x,v)
    = \bigl(\nabla_v \chi\bigr)(x-tv,v) - t\,\bigl(\nabla_y \chi\bigr)(x-tv,v).
  \end{equation*}
  Hence, using \eqref{eq:cutoff-derivatives} and $|t|\le \theta R_2^p$ on $Q_{R_2}$,
  \[
    |\nabla_v \chi_{R_2}|
    \le \frac{C}{R_2-R_1} + |t|\frac{C}{\theta\,(R_2^{1+p}-R_1^{1+p})}
    \le \frac{C}{R_2-R_1} + \theta R_2^p\frac{C}{\theta\,(R_2^{1+p}-R_1^{1+p})}.
  \]
  We estimate the last ratio using the elementary inequality
  \[
    R_2^{1+p}-R_1^{1+p}
    = R_2^{1+p}\Bigl(1-(R_1/R_2)^{1+p}\Bigr)
    \ge R_2^{1+p}\Bigl(1-R_1/R_2\Bigr)
    = R_2^p(R_2-R_1),
  \]
  which holds since $(R_1/R_2)^{1+p}\le R_1/R_2$.
  Therefore,
  \begin{equation}
    \label{eq:gradv-chi-bound}
    |\nabla_v \chi_{R_2}|\le \frac{C}{R_2-R_1}
    \quad\text{on }Q_{R_2}.
  \end{equation}

  \medskip
  Next, choose a time cutoff $\eta\in \C_c^\infty((-\theta R_2^p,0])$ such that
  \[
    0\le \eta\le 1,\qquad
    \eta\equiv 1\ \text{on }[-\theta R_1^p,0],\qquad
    \eta(-\theta R_2^p)=0,
  \]
  and
  \begin{equation}
    \label{eq:eta-derivative}
    |\dot\eta(t)|\le \frac{C}{\theta\,(R_2^p-R_1^p)}\le \frac{C}{\theta\,(R_2-R_1)^p}.
  \end{equation}
  The last inequality follows from
  \[
    R_2^p-R_1^p = (R_1+(R_2-R_1))^p-R_1^p \ge (R_2-R_1)^p,
  \]
  which holds for all $p\ge 1$ since, for every $b>0$, the function $a\mapsto (a+b)^p-a^p$ is increasing for
  $a\ge 0$ and equals $b^p$ at $a=0$.

  \textbf{Step 2. Testing the equation.}
  Set
  \[
    \psi(t,x,v) := \chi_{R_2}(t,x,v)^p\,\eta(t).
  \]
  Formally (and rigorously by the regularisation procedure explained in
  Remark~\ref{rem:admissible-tests}), we test \eqref{eq:nonlinear-laplace}
  against $f\psi$ and integrate over $Q_{R_2}$:
  \begin{equation}
    \label{eq:test-identity}
    \int_{Q_{R_2}} (\partial_t+v\cdot\nabla_x)f\; f\psi \dx (t,x,v)
    - \int_{Q_{R_2}}\nabla_v\cdot A\big(t,x,v,f,\nabla_v f\big)\; f\psi\dx (t,x,v) \le 0.
  \end{equation}

  \textbf{Step 3. The transport contribution.}
  We write
  \[
    \int_{Q_{R_2}} (\partial_t+v\cdot\nabla_x)f\; f\psi \dx (t,x,v)
    = \frac12 \int_{Q_{R_2}} (\partial_t+v\cdot\nabla_x)(f^2)\,\psi \dx (t,x,v).
  \]
  Since $\psi$ is compactly supported in $(x,v)$, integrating by parts with respect to the
  transport vector field yields
  \begin{align*}
     & \frac12 \int_{Q_{R_2}} (\partial_t+v\cdot\nabla_x)(f^2)\,\psi \dx (t,x,v)                \\
     & = \frac12\int_{\R^{2d}} f^2(t,x,v)\,\psi(t,x,v) \dx (x,v) \Bigg|_{t=-\theta R_2^p}^{t=0}
    - \frac12 \int_{Q_{R_2}} f^2\,(\partial_t+v\cdot\nabla_x)\psi\dx (t,x,v).
  \end{align*}
  Moreover, $(\partial_t+v\cdot\nabla_x)\chi_{R_2}^p=0$, hence
  $(\partial_t+v\cdot\nabla_x)\psi = \chi_{R_2}^p\,\dot\eta(t)$. Using also $\eta(-\theta R_2^p)=0$
  we obtain
  \begin{align}
    \label{eq:transport-term}
     & \int_{Q_{R_2}} (\partial_t+v\cdot\nabla_x)f\; f\psi\dx (t,x,v) \\
     & = \frac12\int_{\R^{2d}} f(0,x,v)^2\,\chi(x,v)^p \dx (x,v)
    - \frac12 \int_{Q_{R_2}} f^2\,\chi_{R_2}^p\,\dot\eta(t)\dx (t,x,v). \nonumber
  \end{align}

  \textbf{Step 4. The diffusion contribution.}
  Integration by parts in $v$ (no boundary term since $\chi_{R_2}$ is compactly supported in $v$)
  gives
  \begin{multline}\label{eq:diffusion-term-raw}
    - \int_{Q_{R_2}}\nabla_v\cdot A\big(t,x,v,f,\nabla_v f\big)\; f\psi \dx (t,x,v) \\
    = \int_{Q_{R_2}} A\big(t,x,v,f,\nabla_v f\big)\cdot (\nabla_v f) \,\chi_{R_2}^p\,\eta \dx (t,x,v)\\
    + p\int_{Q_{R_2}} A\big(t,x,v,f,\nabla_v f\big) \cdot(\nabla_v\chi_{R_2})\,f\,\chi_{R_2}^{p-1}\,\eta \dx (t,x,v).
  \end{multline}
  The first term gives control of \(\lambda |\nabla_vf|^p\) by \eqref{eq:a-bounds}.
  By the upper bound in \eqref{eq:a-bounds}, we estimate the mixed term by
  Young's inequality as
  \[
    p\,|A(t,x,v,f,\nabla_vf)| \chi_{R_2}^{p-1}\,|f|\,|\nabla_v\chi_{R_2}|
    \le \frac{\lambda}{2} |\nabla_v f|^p\chi_{R_2}^p
    + C(p,\lambda,\Lambda)\,|f|^p\,|\nabla_v\chi_{R_2}|^p.
  \]
  Inserting this into \eqref{eq:diffusion-term-raw}, we obtain
  \begin{multline}\label{eq:diffusion-term}
    - \int_{Q_{R_2}}\nabla_v\cdot A\Bigl(t,x,v,f,\nabla_v f\Bigr)\; f\psi  \dx (t,x,v) \\
    \ge \frac{\lambda}{2} \int_{Q_{R_2}} |\nabla_v f|^{p}\,\chi_{R_2}^p\,\eta \dx (t,x,v)
    - C(p,\lambda,\Lambda) \int_{Q_{R_2}} |f|^p\,|\nabla_v\chi_{R_2}|^p\,\eta \dx (t,x,v).
  \end{multline}

  \textbf{Step 5. Conclusion for the gradient term and for a time slice.}
  Combining \eqref{eq:test-identity}, \eqref{eq:transport-term} and \eqref{eq:diffusion-term}
  yields
  \begin{multline}\label{eq:energy-intermediate}
    \frac12\int_{\R^{2d}} f(0,x,v)^2\,\chi(x,v)^p \dx (x,v)
    + \frac{\lambda}{2} \int_{Q_{R_2}} |\nabla_v f|^{p}\,\chi_{R_2}^p\,\eta \dx (t,x,v) \\
    \le
    \frac12 \int_{Q_{R_2}} f^2\,\chi_{R_2}^p\,|\dot\eta(t)|\dx (t,x,v)
    + C(p,\lambda,\Lambda)\int_{Q_{R_2}} |f|^p\,|\nabla_v\chi_{R_2}|^p\,\eta\dx (t,x,v) .
  \end{multline}
  Using \eqref{eq:eta-derivative} and \eqref{eq:gradv-chi-bound}, and $\chi_{R_2}^p\eta\le 1$,
  we estimate the right-hand side by
  \begin{align*}
    \frac12 \int_{Q_{R_2}} f^2\,\chi_{R_2}^p\,|\dot\eta(t)| \dx (t,x,v)
     & \le \frac{C}{\theta\,(R_2-R_1)^p}\,\int_{Q_{R_2}} |f|^2\dx (t,x,v), \\
    \int_{Q_{R_2}} |f|^p\,|\nabla_v\chi_{R_2}|^p\,\eta\dx (t,x,v)
     & \le \frac{C}{(R_2-R_1)^p}\,\int_{Q_{R_2}} |f|^p \dx (t,x,v).
  \end{align*}
  Since $\chi_{R_2}\equiv 1$ and $\eta\equiv 1$ on $Q_{R_1}$, we deduce from \eqref{eq:energy-intermediate},
  \begin{align*}
     & \int_{D_{\theta,R_1}(0)} |f(0,x,v)|^2 \dx (x,v)
    + \int_{Q_{R_1}} |\nabla_v f|^p \dx(t,x,v)         \\
     & \le
    \frac{C}{\theta\,(R_2-R_1)^p}\,\int_{Q_{R_2}} |f|^2\dx(t,x,v)
    + \frac{C}{(R_2-R_1)^p}\,\int_{Q_{R_2}} |f|^p\dx(t,x,v).
  \end{align*}

  \textbf{Step 6. Upgrading to an $\L^\infty_t\L^2_{x,v}$ bound.}
  Fix $\tau\in(-\theta R_1^p,0)$. We repeat Steps~2--5 with the same spatial cutoff
  $\chi_{R_2}(t,x,v)=\chi(x-tv,v)$ but only integrate in time until \(\tau\).
  Concretely, let $\eta$ be the cutoff from Step~1 satisfying
  $\eta(-\theta R_2^p)=0$, $\eta\equiv 1$ on $[-\theta R_1^p,0]$, and
  $|\dot\eta|\lesssim (\theta(R_2-R_1)^p)^{-1}$, and define formally
  \[
    \eta_\tau(t):=\eta(t)\,\mathds{1}_{\{t\le \tau\}}.
  \]
  Then $\eta_\tau\equiv 1$ on $[-\theta R_1^p,\tau]$, $\eta_\tau(-\theta R_2^p)=0$, and
  $\eta_\tau(0)=0$. Testing the equation with $f\,\chi_{R_2}^p\,\eta_\tau$ and repeating
  the computations of Steps~3--5 (with the same estimates on $\dot\eta$ and
  $\nabla_v\chi_{R_2}$) yields the analogue of \eqref{eq:energy-intermediate}, but now
  with the boundary term at time $\tau$
  \begin{multline*}
    \frac12 \int_{\R^{2d}} f(\tau,x,v)^2\,\chi(x-\tau v,v)^p\dx (x,v)
    + \frac{\lambda}{2} \int_{Q_{\theta,R_2}\cap\{t<\tau\}} |\nabla_v f|^p\,\chi_{R_2}^p\dx(t,x,v)
    \\
    \le
    \frac12 \int_{Q_{\theta,R_2}\cap\{t<\tau\}} f^2\,\chi_{R_2}^p\,|\dot\eta|\dx(t,x,v)
    + C \int_{Q_{\theta,R_2}\cap\{t<\tau\}} |f|^p\,|\nabla_v\chi_{R_2}|^p\dx(t,x,v) .
  \end{multline*}
  Using $|\dot\eta|\lesssim(\theta(R_2-R_1)^p)^{-1}$, $|\nabla_v\chi_{R_2}|^p\lesssim (R_2-R_1)^{-p}$
  and enlarging the integration domain on the right-hand side, we obtain
  \[
    \int_{\R^{2d}} f(\tau,x,v)^2\,\chi(x-\tau v,v)^p\dx (x,v)
    \lesssim
    \frac{1}{\theta(R_2-R_1)^p}\|f\|_{\L^2(Q_{\theta,R_2})}^2
    + \frac{1}{(R_2-R_1)^p}\|f\|_{\L^p(Q_{\theta,R_2})}^p.
  \]
  Since $\chi\equiv 1$ on $B_{\theta R_1^{1+p}}\times B_{R_1}$, we have
  $\chi(x-\tau v,v)\equiv 1$ on $D_{\theta,R_1}(\tau)$, hence
  \[
    \int_{D_{\theta,R_1}(\tau)} |f(\tau,x,v)|^2\dx (x,v)
    \lesssim
    \frac{1}{\theta(R_2-R_1)^p}\|f\|_{\L^2(Q_{\theta,R_2})}^2
    + \frac{1}{(R_2-R_1)^p}\|f\|_{\L^p(Q_{\theta,R_2})}^p.
  \]
  Taking the supremum over $\tau\in(-\theta R_1^p,0)$ yields the desired
  $\|f\|_{\L^\infty_t\L^2_{x,v}(Q_{\theta,R_1})}$ bound.
\end{proof}

\section{Local boundedness of subsolutions}
\label{sec:iteration}

Using the gain of integrability, we now prove the local boundedness of
nonnegative subsolutions via De Giorgi iteration on $p$-adapted
kinetic cylinders. A classical treatment of local boundedness can be found in
DiBenedetto~\cite{dibenedetto-1993-degenerate}, where the smallness condition
is formulated in terms of low integrability.  For this presentation, we did not
optimise this aspect and sought a simple proof inspired by
Lindfors~\cite{lindfors-2016-obstacle-laplacian}.

We consider first the case $p \ge 2$, where we iterate the truncated $\L^p$ norm.

\begin{theorem}\label{thm:DeGiorgi-localbdd}
  Let $p\in\Bigl[2,\,2+\frac{2}{d}\Bigr)$ and assume \eqref{eq:a-bounds}.  Let
  $f\ge 0$ be a weak subsolution of \eqref{eq:nonlinear-laplace} in an open set
  containing $\overline{Q_{\Theta,2R}(z_0)}$. Then there exists a constant
  $\varepsilon_0=\varepsilon_0(d,p,\lambda,\Lambda)\in(0,1)$ such that the following holds.

  \medskip
  \noindent\textbf{Intrinsic boundedness principle.}
  Fix any level $K>0$ and set the intrinsic factor
  \begin{equation}
    \Theta := K^{2-p}.
    \label{eq:Theta-intrinsic}
  \end{equation}
  If $f$ satisfies the normalised smallness condition
  \begin{equation}\label{eq:S}
    \fint_{Q_{\Theta,2R}(z_0)}\Bigl(\frac{f}{K}\Bigr)^p \dx (t,x,v)
    \le\varepsilon_0.
  \end{equation}
  then
  \begin{equation}\label{eq:B}
    \esssup_{Q_{\Theta,R}(z_0)} f \le K.
  \end{equation}
  In particular, nonnegative weak subsolutions are locally bounded.
\end{theorem}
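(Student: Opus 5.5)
We run a De Giorgi iteration on shrinking intrinsic cylinders, combining the energy estimate (Lemma~\ref{lem:energy}) with the localised gain of integrability (Lemma~\ref{lem:gainsubloc}, second version for $p\ge2$). By the kinetic translation invariance (structural equivalence) we may take $z_0=0$.

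First we normalise. Set $\tilde f := f/K$. Then $\tilde f$ is a nonnegative weak subsolution of a structurally equivalent equation with time rescaled. Indeed, with $g(t,x,v)=f(\Theta t,\Theta x,v)/K$ and $\Theta=K^{2-p}$, we have $(\partial_t+v\cdot\nabla_x)g = \Theta K^{-1}\nabla_v\cdot A$. The nonlinearity $\tilde A(\xi)=\Theta K^{-1}A(K\xi)$ has bounds $\Theta K^{p-2}|\xi|^{p-1}=|\xi|^{p-1}$. This rescaling maps $Q_{\Theta,\rho}$ onto $Q_{1,\rho}$, so it suffices to treat $K=1$ and $\Theta=1$, with $\fint_{Q_{1,2R}} g^p\le\varepsilon_0$. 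The dilation $\delta_R$ further reduces to $R=1$, since it preserves the structure and averages.

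Next we set up the iteration. Take radii $R_k=1+2^{-k}$ and levels $k_j=1-2^{-j-1}$ (so $k_j\uparrow1$). Let $g_j=(g-k_j)_+$ and $Y_j=\fint_{Q_{1,R_j}} g_j^p$. Each $g_j$ is again a nonnegative subsolution, since truncations of subsolutions are subsolutions. In the formal testing framework, the energy estimate applies to $g_j$ with test $g_j\chi^p\eta$.

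Between $R_{j+1}$ and $R_j$ we insert an intermediate radius $\bar R_j$. The energy estimate on $(\bar R_j,R_j)$ bounds $\|\nabla_v g_{j+1}\|_p^p$ and $\|g_{j+1}\|_{L^\infty_tL^2}^2$ by $2^{pj}(\|g_{j+1}\|_2^2+\|g_{j+1}\|_p^p)$. On the set $\{g_{j+1}>0\}$ we have $g_j\ge 2^{-j-2}$, hence $g_{j+1}^2\le g_j^2\le 2^{(p-2)(j+2)}g_j^p$ (this is where $p\ge2$ is used). Thus all right-hand terms are bounded by $C^jY_j$.

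Lemma~\ref{lem:gainsubloc} (version for $p\ge 2$) on $(R_{j+1},\bar R_j)$ then gives
\[
\|g_{j+1}\|_{L^q(Q_{1,R_{j+1}})}\le C^j\bigl(Y_j^{1/p}+Y_j^{(p-1)/p}\bigr).
\]
Since $Y_j\le\varepsilon_0\le1$ and $p\ge2$, we have $Y_j^{(p-1)/p}\le Y_j^{1/p}$, so the bound reduces to $C^jY_j^{1/p}$.

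We close the recursion using Hölder and Chebyshev. Hölder gives $Y_{j+1}\le \|g_{j+1}\|_q^p\,|\{g_{j+1}>0\}\cap Q_{R_{j+1}}|^{1-p/q}$. Chebyshev gives $|\{g_{j+1}>0\}|\le 2^{p(j+2)}Y_j$. Together these yield
\[
Y_{j+1}\le C^j\,Y_j^{1+\delta},\qquad \delta=1-\frac pq>0.
\]
Here $\delta>0$ because $q>p$ in the range $p<2+\frac2d$. The standard fast geometric convergence lemma then gives $Y_j\to0$ provided $Y_0\le\varepsilon_0$ is small. This yields $g\le1$ a.e.\ on $Q_{1,1}$, which is \eqref{eq:B}.

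The main obstacle is homogeneity. The gain estimate mixes the powers $\|\nabla_v g\|$ and $\|\nabla_v g\|^{p-1}$, and the energy estimate mixes $L^2$ and $L^p$. The intrinsic choice $\Theta=K^{2-p}$ is exactly what lets both be controlled by $Y_j$ after normalising to level $1$. One must then check that the smallness $Y_j\le1$ lets the lower power $Y_j^{1/p}$ dominate, and that the $(p-1)$-power term only helps. We also need to check that the factors $\Gamma_v,\Gamma_t,|Q|$ in Lemma~\ref{lem:gainsubloc} grow only geometrically in $j$, which holds since the radii are comparable to $1$. Local boundedness in general follows by choosing $K$ large enough that \eqref{eq:S} holds, using $f\in L^p$ and $|Q_{\Theta,2R}|\sim\Theta$: the average is about $K^{p-2}K^{-p}\|f\|_p^p=K^{-2}\|f\|_p^p\to0$.
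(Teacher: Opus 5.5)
Your proposal is correct and follows essentially the same route as the paper. You rescale intrinsically with $\Theta=K^{2-p}$ to level $1$ on $Q_{1,2}$, then run a De~Giorgi iteration of the truncated $\L^p$ energy combining Lemma~\ref{lem:energy}, Lemma~\ref{lem:gainsubloc}, H\"older and Chebyshev, with $p\ge 2$ used to absorb the $\L^2$ and $(p-1)$-power terms; your intermediate radius $\bar R_j$ only replaces the paper's two-step recursion $M_{n+1}\le C2^{\gamma n}M_{n-1}^{1+\delta}$, handled there via even and odd subsequences, by a one-step one.

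There is one slip in your closing remark. In fact $|Q_{\Theta,2R}|\sim\Theta^{d+1}$, not $\Theta$. Using $Q_{\Theta,2R}\subset Q_{1,2R}$ for $K\ge 1$, the average in \eqref{eq:S} is of order $K^{(p-2)(d+1)-p}\|f\|_{\L^p}^p=K^{d(p-2)-2}\|f\|_{\L^p}^p$. This still tends to $0$ as $K\to\infty$ exactly because $p<2+\frac2d$, so your conclusion survives.
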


\begin{proof}
  The proof is a De Giorgi iteration combining (i) the energy estimate
  (Lemma~\ref{lem:energy}) on nested kinetic $p$-cylinders, and (ii) the
  localised gain of integrability (Lemma~\ref{lem:gainsubloc}).  The key
  nonlinear bookkeeping point is precisely the intrinsic relation
  \eqref{eq:Theta-intrinsic}, which balances the $\L^2$ term coming from the
  time cutoff against the level increments in the De Giorgi truncations. We
  recall
  \[
    q := p\,\frac{4d+2}{d(p+2)},\qquad
    r := p\,\frac{4d+2}{p(3d+2)-2d}.
  \]

  \textbf{Step 0. Normalisation (translation and scaling).}
  Since kinetic translations transform \eqref{eq:nonlinear-laplace} into a structurally equivalent equation, we may assume $(t_0,x_0,v_0)=0$.
  Correspondingly, employing the scaling $\delta_R$, we reduce to the case $R=1$.
  Thus we work on $Q_{\Theta,2}$ and aim to prove
  \[
    \esssup_{Q_{\Theta,1}} f \le K
    \quad\text{under \eqref{eq:S}.}
  \]

  Now, we apply the intrinsic scaling
  \[
    u(t,x,v):=\frac{1}{K}\,f\bigl(\Theta t,\ \Theta x,\ v\bigr),
    \qquad \Theta = K^{2-p}.
  \]
  By the compatibility of \eqref{eq:Theta-intrinsic} with the structural properties of \eqref{eq:nonlinear-laplace}, the function $u$ is a weak subsolution of the structurally equivalent equation
  \begin{equation} \label{TildeAEquation}
    (\partial_t + v \cdot \nabla_x)u
    - \nabla_{v} \cdot \tilde{A}(t,x,v,u,\nabla_v u) = 0\quad\text{in}\;\,Q_{1,2},
  \end{equation}
  where $\tilde{A}(t,x,v,\eta,\xi)=K^{1-p}A(\Theta t,\Theta x,v,K\eta,K\xi)$.
  Moreover, since $|Q_{\Theta,2}|=\Theta^{d+1}|Q_{1,2}|$, a change of variables gives
  \[
    \fint_{Q_{1,2}} u^p\dx (t,x,v)
    =
    \fint_{Q_{\Theta,2}} \Bigl(\frac{f}{K}\Bigr)^p\dx (t,x,v).
  \]
  Hence \eqref{eq:S} becomes
  \begin{equation}\label{eq:S0}
    \fint_{Q_{1,2}} u^p\dx (t,x,v)
    \le \varepsilon_0.
  \end{equation}
  It suffices to prove the normalised claim
  \begin{equation}\label{eq:B0}
    \esssup_{Q_{1,1}} u \le 1.
  \end{equation}

  \textbf{Step 1. De Giorgi sequences of levels and radii.}
  Define radii and levels
  \[
    R_n := 1+2^{-n},\qquad k_n := 1-2^{-n},\qquad n=0,1,2,\dots
  \]
  so $R_0=2$, $R_n\downarrow 1$, and $k_0=0$, $k_n\uparrow 1$.
  Let
  \[
    w_n := (u-k_n)_+.
  \]
  Standard truncation arguments show that $w_n$ is also a weak subsolution.
  Define the energy quantities
  \[
    M_n := \int_{Q_{1,R_n}} w_n^p\dx (t,x,v).
  \]
  We will show that $M_n\to 0$ provided that $\varepsilon_0$ is sufficiently small. Since $w_n\downarrow (u-1)_+$ pointwise, this implies $(u-1)_+=0$
  a.e.\ in $Q_{1,1}$, i.e.\ \eqref{eq:B0}.

  \textbf{Step 2. Two basic truncation estimates.}
  We start with the level-set estimate.
  Let
  \[
    E_{n+1}:=\{u>k_{n+1}\}\cap Q_{1,R_{n+1}}.
  \]
  On $E_{n+1}$ we have $w_n=u-k_n \ge k_{n+1}-k_n = 2^{-(n+1)}$, hence
  \begin{equation}
    |E_{n+1}|
    \le 2^{p(n+1)} \int_{Q_{1,R_{n+1}}} w_n^p\dx (t,x,v)
    \le 2^{p(n+1)} M_n.
    \label{eq:level-set}
  \end{equation}

  Next, we provide an $\L^2$ bound for $w_n$ in terms of $M_{n-1}$, where we
  use the additional assumption \(p\ge 2\).  For $n\ge1$, on $\{w_n>0\}=\{u>k_n\}$
  we have $w_{n-1}=u-k_{n-1}\ge k_n-k_{n-1}=2^{-n}$, hence
  $\{w_n>0\}\subset\{w_{n-1}\ge 2^{-n}\}$ and
  \begin{equation*}
    |\{w_n>0\}\cap Q_{1,R_{n-1}}|
    \le 2^{np}\int_{Q_{1,R_{n-1}}} w_{n-1}^p\dx (t,x,v)
    =2^{np}M_{n-1}.
  \end{equation*}
  By H\"older's inequality on $Q_{1,R_{n-1}}$ and using $w_n\le w_{n-1}$,
  \begin{multline}\label{eq:L2-vs-A}
    \int_{Q_{1,R_{n-1}}} w_n^2\dx (t,x,v) \\
    \le
    \Bigl(\int_{Q_{1,R_{n-1}}} w_n^p\dx (t,x,v)\Bigr)^{2/p}
    |\{w_n>0\}\cap Q_{1,R_{n-1}}|^{1-2/p}
    \le 2^{n(p-2)}M_{n-1}
  \end{multline}
  for $p \ge 2$.

  \textbf{Step 3. Gradient bound from the energy estimate.}
  Apply Lemma~\ref{lem:energy} to $w_n$ in the nested cylinders
  $Q_{1,R_{n-1}}\supset Q_{1,R_n}$ (take $R_1=R_n$, $R_2=R_{n-1}$, $\theta=1$):
  \begin{equation*}
    \|\nabla_v w_n\|_{\L^p(Q_{1,R_n})}^p
    \le
    \frac{C}{(R_{n-1}-R_n)^p}\,\|w_n\|_{\L^2(Q_{1,R_{n-1}})}^2
    +\frac{C}{(R_{n-1}-R_n)^p}\,\|w_n\|_{\L^p(Q_{1,R_{n-1}})}^p.
  \end{equation*}
  Since $R_{n-1}-R_n = 2^{-n}$, using $w_n\le w_{n-1}$ and \eqref{eq:L2-vs-A} yields
  \begin{equation*}
    \|\nabla_v w_n\|_{\L^p(Q_{1,R_n})}^p
    \le
    C\,2^{np}\Bigl(2^{n(p-2)}M_{n-1}+M_{n-1}\Bigr)
    =
    C\,2^{np}\bigl(1+2^{n(p-2)}\bigr)M_{n-1}.
  \end{equation*}

  \textbf{Step 4. $\L^q$ estimate from the gain of integrability.}
  Apply the gain-of-integrability to the subsolution $w_n$ with
  $R_2=R_n$, $R_1=R_{n+1}$, and $\theta=1$.
  Since $R_n\in[1,2]$, the cutoff costs satisfy
  \begin{equation*}
    \Gamma_t \lesssim 2^n,\qquad \Gamma_v\lesssim 2^n,
  \end{equation*}
  and $|Q_{1,R_n}|\simeq 1$ uniformly in $n$.
  Thus,
  \begin{equation}
    \|w_n\|_{\L^q(Q_{1,R_{n+1}})}
    \le C\Bigl[
      \|\nabla_v w_n\|_{\L^p(Q_{1,R_n})}
      +2^n\|\nabla_v w_n\|_{\L^p(Q_{1,R_n})}^{p-1}
      +2^n\|w_n\|_{\L^p(Q_{1,R_n})}
      \Bigr].
    \label{eq:Lq-cor}
  \end{equation}
  We continue by estimating each term:
  \begin{align}
    \|\nabla_v w_n\|_{\L^p(Q_{1,R_n})}
     & \le C\,2^n\bigl(1+2^{n(p-2)}\bigr)^{1/p}\,M_{n-1}^{1/p},
    \label{eq:term1}                                                           \\
    \|\nabla_v w_n\|_{\L^p(Q_{1,R_n})}^{p-1}
     & \le C\,2^{n(p-1)}\bigl(1+2^{n(p-2)}\bigr)^{(p-1)/p}\,M_{n-1}^{(p-1)/p},
    \notag                                                                     \\
    \|w_n\|_{\L^p(Q_{1,R_n})}
     & \le M_{n-1}^{1/p}.
    \label{eq:term3}
  \end{align}
  Plugging \eqref{eq:term1}--\eqref{eq:term3} into \eqref{eq:Lq-cor} yields
  \begin{multline}\label{eq:Lq-bound}
    \|w_n\|_{\L^q(Q_{1,R_{n+1}})}
    \le C\Bigl[
      2^n(1+2^{n(p-2)})^{1/p}M_{n-1}^{1/p}
      +2^{np}(1+2^{n(p-2)})^{(p-1)/p}M_{n-1}^{(p-1)/p}
      \Bigr].
  \end{multline}

  \textbf{Step 5. De Giorgi recursion for $M_n$.}
  Set
  \begin{equation*}
    \delta := 1-\frac{p}{q} = \frac{2(d+1)-dp}{4d+2} >0
    \qquad(\text{since }p<2+2/d).
  \end{equation*}
  Since $w_{n+1}\le w_n$ and $w_{n+1}>0$, i.e.\ $ u>k_{n+1}$ on the good set $E_{n+1}$,
  \[
    M_{n+1}
    =\int_{Q_{1,R_{n+1}}} w_{n+1}^p \dx (t,x,v)
    \le \int_{Q_{1,R_{n+1}}} w_n^p\,\indicator_{E_{n+1}} \dx (t,x,v).
  \]
  By H\"older's inequality with exponents $\frac{q}{p}$ and $\bigl(\frac{q}{p}\bigr)'$,
  \begin{equation*}
    M_{n+1}
    \le \|w_n\|_{\L^q(Q_{1,R_{n+1}})}^p\,|E_{n+1}|^{1-p/q}
    =
    \|w_n\|_{\L^q(Q_{1,R_{n+1}})}^p\,|E_{n+1}|^{\delta}.
  \end{equation*}
  Using \eqref{eq:level-set} and $M_n\le M_{n-1}$,
  \begin{equation*}
    |E_{n+1}|^\delta
    \le \bigl(2^{p(n+1)}M_n\bigr)^\delta
    \le 2^{p(n+1)\delta}M_{n-1}^{\delta}.
  \end{equation*}
  Hence
  \begin{equation}
    M_{n+1}\le 2^{p(n+1)\delta}\,\|w_n\|_{\L^q(Q_{1,R_{n+1}})}^p\,M_{n-1}^\delta.
    \label{eq:An-rec-1}
  \end{equation}
  Raising \eqref{eq:Lq-bound} to the power $p$ and using $(a+b)^p\le C(a^p+b^p)$
  gives
  \begin{equation*}
    \|w_n\|_{\L^q(Q_{1,R_{n+1}})}^p
    \le C\Bigl[
      2^{np}\bigl(1+2^{n(p-2)}\bigr)M_{n-1}
      +2^{np^2}\bigl(1+2^{n(p-2)}\bigr)^{p-1}M_{n-1}^{p-1}
      \Bigr].
  \end{equation*}
  Plugging this into \eqref{eq:An-rec-1} and absorbing harmless powers of $2$ into
  $2^{\gamma n}$ yields
  \begin{equation}
    M_{n+1}
    \le C\,2^{\gamma n}\Bigl[
      M_{n-1}^{1+\delta}
      +M_{n-1}^{p-1+\delta}
      \Bigr],
    \qquad n\ge 1,
    \label{eq:An-rec-raw}
  \end{equation}
  for some $\gamma=\gamma(d,p)$.

  Assuming (by choosing $\varepsilon_0\le |Q_{1,2}|^{-1}$) that $M_0\le 1$, we have $M_n\le 1$ for all $n$,
  and each exponent on $M_{n-1}$ in \eqref{eq:An-rec-raw} is $\ge 1+\delta$ (by the
  constraint \(p\ge2\)).  Thus, for $M_{n-1}\in(0,1]$,
  \begin{equation*}
    M_{n+1} \le C\,2^{\gamma n}\,M_{n-1}^{1+\delta},\qquad n\ge 1.
  \end{equation*}
  Equivalently, shifting indices,
  \begin{equation}
    M_{m+2} \le C\,2^{\gamma m}\,M_{m}^{1+\delta},\qquad m\ge 0.
    \label{eq:Am-rec}
  \end{equation}

  \textbf{Step 6. Convergence of the iteration.}
  Let $B_m:=M_{2m}$ and $C_m:=M_{2m+1}$. From \eqref{eq:Am-rec},
  \[
    B_{m+1}\le C\,2^{2\gamma m}\,B_m^{1+\delta},
    \qquad
    C_{m+1}\le C\,2^{\gamma(2m+1)}\,C_m^{1+\delta}.
  \]
  Both are of the form $Y_{m+1}\le C_1 b^m Y_m^{1+\delta}$ with $b>1$.  A standard
  fast-convergence lemma yields a $\delta_0=\delta_0(C_1,b,\delta)>0$ such that if $Y_0\le\delta_0$, then
  $Y_m\to 0$.  Thus, if $M_0\le\delta_0$, then $M_n\to 0$.

  Choose $\varepsilon_0$ in \eqref{eq:S0} so small that
  \[
    M_0=\int_{Q_{1,2}} u^p\dx (t,x,v)
    \le |Q_{1,2}|\fint_{Q_{1,2}} u^p\dx (t,x,v)
    \le |Q_{1,2}|\varepsilon_0 \le \delta_0.
  \]
  Then $M_n\to 0$ and, since $w_n\downarrow (u-1)_+$ pointwise on $Q_{1,1}$,
  \[
    0=\lim_{n\to\infty}M_n
    = \int_{Q_{1,1}} (u-1)_+^p\dx (t,x,v),
  \]
  so $(u-1)_+=0$ a.e.\ in $Q_{1,1}$, i.e.\ $u\le 1$ a.e.\ in $Q_{1,1}$.
  This proves \eqref{eq:B0}.

  \textbf{Step 7. Scaling back to $f$.}
  Undoing the intrinsic scaling $u=f/K$, with $\Theta=K^{2-p}$, as well as the preliminary translation and rescaling to $Q_{\Theta,R}(z_0)$ gives
  \[
    \esssup_{Q_{\Theta,R}(z_0)} f \le K
  \]
  provided \eqref{eq:S} holds. This completes the proof.
\end{proof}

Let us now consider the case $p <2$. Here, we iterate the truncated $\L^2$ norm.

\begin{theorem}\label{thm:DeGiorgi-localbdd-singular}
  Let $p \in (2-\frac{2}{3d+2},2)$ and assume \eqref{eq:a-bounds}.  Let $f\ge 0$ be
  a weak subsolution of \eqref{eq:nonlinear-laplace} in an open set containing
  $\overline{Q_{\Theta,2R}(z_0)}$. Then there exists a constant
  $\varepsilon_0=\varepsilon_0(d,p,\lambda,\Lambda)\in(0,1)$ such that the following holds.

  \medskip
  \noindent\textbf{Intrinsic boundedness principle.}
  Fix any level $K>0$ and set the intrinsic factor
  \begin{equation*}
    \Theta := K^{2-p}.
  \end{equation*}
  If $f$ satisfies the normalised smallness condition
  \begin{equation}\label{eq:S-singular-p2}
    \fint_{Q_{\Theta,2R}(z_0)}\Bigl(\frac{f}{K}\Bigr)^2 \dx (t,x,v)
    \le\varepsilon_0,
  \end{equation}
  then
  \begin{equation*}
    \esssup_{Q_{\Theta,R}(z_0)} f \le K.
  \end{equation*}
  In particular, nonnegative weak subsolutions are locally bounded.
\end{theorem}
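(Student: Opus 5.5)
We follow the De Giorgi scheme from the proof of Theorem~\ref{thm:DeGiorgi-localbdd}, with the truncated \(\L^2\) norm replacing the truncated \(\L^p\) norm as the iterated quantity.

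\textbf{Normalisation.} As in Step~0 there, we translate, rescale by \(\delta_R\), and set \(u(t,x,v)=K^{-1}f(\Theta t,\Theta x,v)\) with \(\Theta=K^{2-p}\). Then \(u\) is a nonnegative subsolution of a structurally equivalent equation in \(Q_{1,2}\), and \eqref{eq:S-singular-p2} becomes
\[
\fint_{Q_{1,2}}u^2\le\varepsilon_0 .
\]
With \(R_n=1+2^{-n}\), \(k_n=1-2^{-n}\) and \(w_n=(u-k_n)_+\), we set
\[
M_n:=\int_{Q_{1,R_n}}w_n^2 .
\]
The level-set estimate now reads \(|E_{n+1}|\le 2^{2(n+1)}M_n\) with \(E_{n+1}=\{u>k_{n+1}\}\cap Q_{1,R_{n+1}}\).

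\textbf{Controlling \(\L^p\) by \(\L^2\).} Since \(p<2\), Hölder's inequality together with the level-set bound gives
\[
\int_{Q_{1,R_{n-1}}}w_n^p\le \Bigl(\int w_n^2\Bigr)^{p/2}\bigl|\{w_n>0\}\cap Q_{1,R_{n-1}}\bigr|^{1-p/2}\le C\,2^{n(2-p)}M_{n-1}.
\]
The factor \(M_{n-1}^{p/2}\cdot M_{n-1}^{1-p/2}=M_{n-1}\) is linear, which is exactly what keeps the recursion superlinear.

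\textbf{Energy step.} Lemma~\ref{lem:energy} with \(\theta=1\) on \(Q_{1,R_{n-1}}\supset Q_{1,R_n}\) yields
\[
\|\nabla_v w_n\|_{\L^p(Q_{1,R_n})}^p\le C\,2^{\gamma n}M_{n-1}.
\]
We then apply the first estimate of Lemma~\ref{lem:gainsubloc}, the version valid for all \(p\), on \(Q_{1,R_n}\supset Q_{1,R_{n+1}}\). This gives
\[
\|w_n\|_{\L^q(Q_{1,R_{n+1}})}\le C\,2^{\gamma n}\bigl(M_{n-1}^{1/p}+M_{n-1}^{(p-1)/p}\bigr).
\]

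\textbf{Recursion.} Here we need \(q>2\). This holds because \(q>p'>2\) in the admissible range, as guaranteed by \(p>2-\frac{2}{3d+2}\). Hölder's inequality with exponents \(q/2\) and its conjugate gives
\[
M_{n+1}\le \|w_n\|_{\L^q}^2|E_{n+1}|^{1-2/q}.
\]
With \(\delta:=1-2/q>0\) and \(M_n\le M_{n-1}\le 1\), this produces
\[
M_{n+1}\le C\,2^{\gamma n}\bigl(M_{n-1}^{2/p+\delta}+M_{n-1}^{2(p-1)/p+\delta}\bigr).
\]

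\textbf{Main obstacle.} We must ensure the smaller exponent \(2(p-1)/p+\delta\) exceeds \(1\). This is equivalent to
\[
1-\frac{2}{q}>\frac{2-p}{p},\quad\text{that is,}\quad \frac1q<\frac{p-1}{p}=\frac{1}{p'},
\]
i.e.\ \(q>p'\), which is precisely the lower bound \(p>2-\frac{2}{3d+2}\). So the exponents are \(\ge 1+\delta'\) for some \(\delta'>0\), and we get
\[
M_{m+2}\le C\,b^mM_m^{1+\delta'} .
\]

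\textbf{Conclusion.} We split into even and odd subsequences and apply the fast geometric convergence lemma. Choosing \(\varepsilon_0\) small so that \(M_0\le|Q_{1,2}|\varepsilon_0\) is below the threshold gives \(M_n\to0\). Hence \(u\le1\) on \(Q_{1,1}\), and scaling back gives \(f\le K\) on \(Q_{\Theta,R}(z_0)\).
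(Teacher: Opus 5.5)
Your proposal is correct and follows essentially the same route as the paper. Both iterate the truncated $\L^2$ energies, control $\|w_n\|_{\L^p}^p$ linearly by the previous $\L^2$ energy via Hölder and Chebyshev, and feed the energy estimate into the all-$p$ version of Lemma~\ref{lem:gainsubloc}. Both arrive at a recursion $M_{n+1}\lesssim 2^{\gamma n}M_{n-1}^{1+\delta'}$, whose superlinearity is exactly $q>p'$, i.e.\ $p>2-\frac{2}{3d+2}$. The only cosmetic difference is that you carry both powers $M_{n-1}^{1/p}$ and $M_{n-1}^{(p-1)/p}$ to the end, whereas the paper absorbs the larger one at once using $Y_{n-1}\le 1$. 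As the paper does, you should state explicitly that $\varepsilon_0$ is also chosen so that $M_0\le 1$.
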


\begin{proof}
  The proof is very similar to the previous proof.  The key singular feature for
  $p<2$ is that the term $\|\nabla_v(\cdot)\|_{\L^p}^{\,p-1}$ in Lemma~\ref{lem:gainsubloc}
  dominates for small truncations.  Again the proof is a De~Giorgi iteration, now in
  $\L^2$, combining (i) the energy estimate (Lemma~\ref{lem:energy}) on nested
  kinetic $p$-cylinders, and (ii) the localised gain of integrability
  (Lemma~\ref{lem:gainsubloc}).

  \textbf{Step 0. Normalisation.}
  As in the proof of Theorem~\ref{thm:DeGiorgi-localbdd}, we may assume that $z_0=0$
  and $R=1$. Thus, as before, we work on $Q_{\Theta,2}$, and the goal is to prove
  \[
    \esssup_{Q_{\Theta,1}} f \le K
    \quad\text{under \eqref{eq:S-singular-p2}.}
  \]
  Applying the intrinsic scaling again
  \[
    u(t,x,v):=\frac{1}{K}\,f\bigl(\Theta t,\ \Theta x,\ v\bigr),
    \qquad \Theta = K^{2-p},
  \]
  we see that $u$ is a weak subsolution of the structurally equivalent equation \eqref{TildeAEquation} in
  $Q_{1,2}$, where $\tilde{A}$ is as above. Moreover, a change of variables yields
  \[
    \fint_{Q_{1,2}} u^2\dx (t,x,v)
    =
    \fint_{Q_{\Theta,2}} \Bigl(\frac{f}{K}\Bigr)^2\dx (t,x,v).
  \]
  Hence \eqref{eq:S-singular-p2} becomes
  \begin{equation}\label{eq:S0-singular-p2}
    \fint_{Q_{1,2}} u^2\dx (t,x,v)
    \le \varepsilon_0.
  \end{equation}
  It suffices to prove the normalised claim
  \begin{equation}\label{eq:B0-singular-p2}
    \esssup_{Q_{1,1}} u \le 1.
  \end{equation}

  \textbf{Step 1. De Giorgi sequences of levels and radii.}
  Define radii and levels
  \[
    R_n := 1+2^{-n},\qquad k_n := 1-2^{-n},\qquad n=0,1,2,\dots
  \]
  so $R_0=2$, $R_n\downarrow 1$, and $k_0=0$, $k_n\uparrow 1$.
  Let
  \[
    w_n := (u-k_n)_+.
  \]
  Standard truncation arguments show that $w_n$ is also a weak subsolution.
  We define the $\L^2$ energies by
  \[
    Y_n := \|w_n\|_{\L^2(Q_{1,R_n})}^2
    = \int_{Q_{1,R_n}} w_n^2\dx (t,x,v).
  \]
  We will show that $Y_n\to 0$ if $\varepsilon_0$ is small enough. Since $w_n\downarrow (u-1)_+$ pointwise, this implies $(u-1)_+=0$
  a.e.\ in $Q_{1,1}$, i.e.\ \eqref{eq:B0-singular-p2}.

  \textbf{Step 2. The De Giorgi measure step (using $\L^q$).}
  Let
  \[
    \delta_n := k_{n+1}-k_n = 2^{-(n+1)}.
  \]
  On $\{w_{n+1}>0\}\cap Q_{1,R_{n+1}}=\{u>k_{n+1}\}\cap Q_{1,R_{n+1}}$ we have $w_n=u-k_n\ge \delta_n$, hence
  \[
    w_{n+1}\le w_n\,\mathds{1}_{\{w_n\ge \delta_n\}}
    \qquad\text{on }Q_{1,R_{n+1}}.
  \]
  Therefore
  \[
    Y_{n+1}
    =\int_{Q_{1,R_{n+1}}} w_{n+1}^2\dx (t,x,v)
    \le \int_{Q_{1,R_{n+1}}} w_n^2\,\mathds{1}_{\{w_n\ge \delta_n\}}\dx (t,x,v).
  \]
  By H\"older's inequality with exponents $\frac{q}{2}$ and $\bigl(\frac{q}{2}\bigr)^{\prime}$,
  \begin{equation}
    Y_{n+1}
    \le
    \bigl|\{w_n\ge \delta_n\}\cap Q_{1,R_{n+1}}\bigr|^{1-\frac{2}{q}}
    \,\|w_n\|_{\L^q(Q_{1,R_{n+1}})}^2.
    \label{eq:DG-measure-sing}
  \end{equation}
  By Chebyshev's inequality,
  \begin{equation}
    \bigl|\{w_n\ge \delta_n\}\cap Q_{1,R_{n+1}}\bigr|
    \le \delta_n^{-2}\int_{Q_{1,R_{n+1}}} w_n^2\dx (t,x,v)
    \le \delta_n^{-2} Y_n.
    \label{eq:chebyshev-sing}
  \end{equation}
  Combining \eqref{eq:DG-measure-sing}--\eqref{eq:chebyshev-sing} yields
  \begin{equation}
    Y_{n+1}
    \le
    \delta_n^{-2\eta}\,Y_n^{\eta}\,\|w_n\|_{\L^q(Q_{1,R_{n+1}})}^2,
    \qquad \eta:=1-\frac{2}{q}.
    \label{eq:DG-link-sing}
  \end{equation}

  \textbf{Step 3. Gradient bound from the energy estimate.}
  For $n\ge 1$, apply Lemma~\ref{lem:energy} to $w_n$ in the nested cylinders
  $Q_{1,R_{n-1}}\supset Q_{1,R_n}$ (take $R_1=R_n$, $R_2=R_{n-1}$, $\theta=1$):
  \begin{equation*}
    \|\nabla_v w_n\|_{\L^p(Q_{1,R_n})}^p
    \le
    \frac{C}{(R_{n-1}-R_n)^p}\,\|w_n\|_{\L^2(Q_{1,R_{n-1}})}^2
    +\frac{C}{(R_{n-1}-R_n)^p}\,\|w_n\|_{\L^p(Q_{1,R_{n-1}})}^p.
  \end{equation*}
  Since $w_n\le w_{n-1}$, we have
  \[
    \|w_n\|_{\L^2(Q_{1,R_{n-1}})}^2 \le Y_{n-1}.
  \]

  On $\{w_n>0\}$ we have $w_{n-1} \ge 2^{-n}$, hence
  \[
    |\{w_n>0\} \cap Q_{1,R_{n-1}} | \le 2^{2n} Y_{n-1}.
  \]
  We deduce
  \begin{equation} \label{eq:newLpL2}
    \|w_n\|_{\L^p(Q_{1,R_{n-1}})}^p \le \|w_n\|_{\L^2(Q_{1,R_{n-1}})}^p \, |\{w_n>0\}|^{1-\frac{p}{2}} \lesssim 2^{n(2-p)}Y_{n-1}.
  \end{equation}
  Therefore,
  \begin{equation*}
    \|\nabla_v w_n\|_{\L^p(Q_{1,R_n})}^p
    \le
    C\,2^{np}\Bigl(Y_{n-1}+2^{n(2-p)}Y_{n-1}\Bigr) \lesssim 2^{2n}Y_{n-1},
  \end{equation*}
  and thus
  \begin{equation}
    \|\nabla_v w_n\|_{\L^p(Q_{1,R_n})}
    \le
    C\,2^{\frac{2n}{p}}\,Y_{n-1}^{1/p},
    \qquad n\ge 1.
    \label{eq:grad-clean-sing}
  \end{equation}

  \textbf{Step 4. $\L^q$ estimate from the gain of integrability.}
  We assume from now on $Y_0\le 1$.
  Since $w_n\le w_{n-1}$ and $Q_{1,R_n}\subset Q_{1,R_{n-1}}$, we have $Y_n\le Y_{n-1}\le 1$ for all $n$.

  Apply Lemma~\ref{lem:gainsubloc} to the subsolution $w_n$ with $R_2=R_n$, $R_1=R_{n+1}$, and $\theta=1$.
  Since $R_n\in[1,2]$, the cutoff costs satisfy
  \begin{equation*}
    \Gamma_t \le C\,2^n,\qquad \Gamma_v\le C\,2^n,
  \end{equation*}
  and $|Q_{1,R_n}|\le C$ uniformly in $n$.
  Hence,
  \begin{align*}
     & \|w_n\|_{\L^q(Q_{1,R_{n+1}})} \\
     & \le C\Bigl[
      \|\nabla_v w_n\|_{\L^p(Q_{1,R_n})}
      +2^n\|\nabla_v w_n\|_{\L^p(Q_{1,R_n})}^{p-1}
      +2^n\|w_n\|_{\L^p(Q_{1,R_n})}
      \Bigr].
  \end{align*}
  Using \eqref{eq:newLpL2} and \eqref{eq:grad-clean-sing}, we obtain
  \[
    \|w_n\|_{\L^q(Q_{1,R_{n+1}})}
    \le C\Bigl[
      2^{\frac{2n}{p}}\,Y_{n-1}^{1/p}
      +2^{n}\bigl(2^{\frac{2n}{p}}\,Y_{n-1}^{1/p}\bigr)^{p-1}
      +2^{\frac{2n}{p}}Y_{n-1}^{1/p}
      \Bigr]
    \le C\,2^{3n}\,Y_{n-1}^{(p-1)/p},
  \]
  where in the last step we used $Y_{n-1}\le 1$ and $\frac{p-1}{p} \le \frac{1}{p}$ for $p \le 2$.

  Thus,
  \begin{equation}
    \|w_n\|_{\L^q(Q_{1,R_{n+1}})}
    \le C\,2^{3n}\,Y_{n-1}^{(p-1)/p},
    \qquad n\ge 1.
    \label{eq:Lq-sing-final}
  \end{equation}

  \textbf{Step 5. De Giorgi recursion for $Y_n$.}
  Inserting \eqref{eq:Lq-sing-final} into \eqref{eq:DG-link-sing} gives
  \begin{equation*}
    Y_{n+1}
    \le
    C\,\delta_n^{-2\eta}\,Y_n^{\eta}\,\bigl(2^{3n}\bigr)^2\,Y_{n-1}^{2(p-1)/p}.
  \end{equation*}
  Since $Y_n\le Y_{n-1}$ and $\eta>0$, we have $Y_n^{\eta}\le Y_{n-1}^{\eta}$, hence
  \begin{equation}
    Y_{n+1}
    \le
    C\,\delta_n^{-2\eta}\,2^{6n}\,Y_{n-1}^{2(p-1)/p+\eta}
    =
    C\,\delta_n^{-2\eta}\,2^{6n}\,Y_{n-1}^{s},
    \qquad s:=\frac{2(p-1)}{p}+1-\frac{2}{q}.
    \label{eq:Yn-rec-sing}
  \end{equation}
  With $\delta_n=2^{-(n+1)}$, we have $\delta_n^{-2\eta}=2^{2\eta(n+1)}$, so \eqref{eq:Yn-rec-sing} implies
  \begin{equation}
    Y_{n+1}
    \le
    C\,2^{\gamma n}\,Y_{n-1}^{s},
    \qquad n\ge 1,
    \label{eq:Yn-rec-clean-sing}
  \end{equation}
  for some $\gamma=\gamma(d,p)>0$.

  \textbf{Step 6. Convergence of the iteration.}
  As
  \[
    s-1
    =
    \frac{2(p-1)}{p}-\frac{2}{q}
    =
    \frac{(3d+2)p-(6d+2)}{p(2d+1)}
  \]
  it follows that
  \[
    s>1 \iff p> 2-\frac{2}{3d+2}.
  \]
  Set $\delta:=s-1>0$.
  Define the subsequences $B_m:=Y_{2m}$ and $C_m:=Y_{2m+1}$. From \eqref{eq:Yn-rec-clean-sing},
  \[
    B_{m+1}\le C\,2^{2\gamma m}\,B_m^{1+\delta},
    \qquad
    C_{m+1}\le C\,2^{\gamma(2m+1)}\,C_m^{1+\delta}.
  \]
  Both are of the form $Z_{m+1}\le C_1 b^m Z_m^{1+\delta}$ with $b>1$.
  A standard fast-convergence lemma yields $\delta_0=\delta_0(C_1,b,\delta)>0$ such that if $Z_0\le\delta_0$,
  then $Z_m\to 0$.
  Thus, if $Y_0\le\delta_0$, then $Y_n\to 0$.

  Choose $\varepsilon_0$ in \eqref{eq:S0-singular-p2} so small that
  \[
    Y_0=\int_{Q_{1,2}} u^2\dx (t,x,v)
    \le |Q_{1,2}|\fint_{Q_{1,2}} u^2\dx (t,x,v)
    \le |Q_{1,2}|\,\varepsilon_0
    \le \min\{1,\delta_0\}.
  \]
  Then $Y_n\to 0$ and, since $w_n\downarrow (u-1)_+$ pointwise on $Q_{1,1}$,
  \[
    0=\lim_{n\to\infty}Y_n
    =\int_{Q_{1,1}} (u-1)_+^2\dx (t,x,v),
  \]
  so $(u-1)_+=0$ a.e.\ in $Q_{1,1}$, i.e.\ $u\le 1$ a.e.\ in $Q_{1,1}$.
  This proves \eqref{eq:B0-singular-p2}.

  \textbf{Step 7. Scaling back to $f$.}
  Undoing the intrinsic scaling $u=f/K$, with $\Theta=K^{2-p}$, as well as the preliminary translation and rescaling to $Q_{\Theta,R}(z_0)$ yields
  \[
    \esssup_{Q_{\Theta,R}(z_0)} f \le K
  \]
  provided \eqref{eq:S-singular-p2} holds. This completes the proof.
\end{proof}

\bibliographystyle{plain}
\bibliography{kinetic_moser}

\end{document}